\newtheorem{theorem}{Theorem}[section]
\newtheorem{corollary}[theorem]{Corollary}
\newtheorem{lemma}[theorem]{Lemma}
\newtheorem{proposition}[theorem]{Proposition}
\numberwithin{equation}{section}
\begin{document}
\title{Global bounded and unbounded solutions to a chemotaxis system with indirect signal production} 
\thanks{}
\author{Philippe Lauren\c{c}ot}
\address{Institut de Math\'ematiques de Toulouse, UMR~5219, Universit\'e de Toulouse, CNRS \\ F--31062 Toulouse Cedex 9, France}
\email{laurenco@math.univ-toulouse.fr}
\keywords{chemotaxis system - global existence - boundedness - unbounded global solutions}
\subjclass{35B40 - 35M33 - 35K10 - 35Q92}

\date{\today}

%%%%%%%%%%%%%%%%
%%%%%%%%%%%%%%%%
\begin{abstract}
The well-posedness of a chemotaxis system with indirect signal production in a two-dimensional domain is shown, all solutions being global unlike the classical Keller-Segel chemotaxis system. Nevertheless, there is a threshold value $M_c$ of the mass of the first component which separates two different behaviours: solutions are bounded when the mass is below $M_c$ while there are unbounded solutions starting from initial conditions having a mass exceeding $M_c$. This result extends to arbitrary two-dimensional domains a previous result of Tao \& Winkler (2017) obtained for radially symmetric solutions to a simplified version of the model in a ball and relies on a different approach involving a Liapunov functional.
\end{abstract}
%%%%%%%%%%%%%%%%
%%%%%%%%%%%%%%%%

\maketitle

%
%     HEADLINES
%
\pagestyle{myheadings}
\markboth{\sc{Ph.~Lauren\c cot}}{\sc{Global bounded and unbounded solutions to a chemotaxis system}}

%%%%%%%%%%%%%%%%
%%%%%%%%%%%%%%%%
\section{Introduction}\label{s1}
%%%%%%%%%%%%%%%%
%%%%%%%%%%%%%%%%

Chemotaxis models have been derived in \cite{PMW1999, STP2013, WP1998} to describe the spreading and aggregative behaviour of the mountain pine beetle which has a major impact on the forest industry in North America. These models describe the space and time evolution of the density $u$ of flying beetles, the density $v$ of nesting beetles, and the concentration $w$ of beetle pheromone (chemoattractant). The main behavioural difference between flying and nesting beetles is that the latter do not disperse in space while the former move randomly in space, their motion being biased by high gradients of the pheromone concentration. A very important feature in the model is that the pheromone is produced by the nesting beetles and not by the flying ones, though it influences only the motion of the latter. This is in sharp contrast with the classical Keller-Segel model where the chemical inducing a bias in the motion of the species is produced directly by the species itself. As we shall see below, this feature alters significantly the dynamics, at least in two space dimensions. 

A simplified version of the models derived in \cite{PMW1999, STP2013, WP1998} is considered in \cite{TW2017} and reads
\begin{subequations}\label{tw}
\begin{align}
\partial_t u & = \mathrm{div}\left( \nabla u - u \nabla w \right) \;\;\text{ in }\;\; (0,\infty)\times \Omega\ , \label{tw1}\\ 
\nu\varepsilon \partial_t v & =  u - v \;\;\text{ in }\;\; (0,\infty)\times \Omega\ , \label{tw2}\\
0 & = D \Delta w - \langle v \rangle + v \;\;\text{ in }\;\; (0,\infty)\times \Omega\ , \label{tw3}
\end{align}
supplemented with no-flux boundary conditions for $u$ and $w$ 
\begin{equation}
\nabla u \cdot \mathbf{n} = \nabla w \cdot \mathbf{n} = 0 \;\;\text{ on }\;\;  (0,\infty)\times\partial\Omega\ , \label{tw4}
\end{equation}
the normalization condition for $w$
\begin{equation}
\langle w \rangle = 0 \;\;\text{ in }\;\;  (0,\infty)\ , \label{tw5}
\end{equation}
\end{subequations} 
and non-negative initial conditions $(u^{in},v^{in})$ for $(u,v)$. Here $\Omega$ is a smooth bounded domain of $\mathbb{R}^2$, $\nu$, $\varepsilon$, and $D$ are positive parameters, and $\langle v \rangle$ in \eqref{tw3} and $\langle w \rangle$ in \eqref{tw5} denote the mean value with respect to the space variable of $v$ and $w$, respectively. Recall that, for $z\in L^1(\Omega)$, its mean value is given by
\begin{equation*}
\langle z \rangle := \frac{1}{|\Omega|} \int_\Omega z(x)\ \mathrm{d}x\ .
\end{equation*} 
A J\"ager-Luckhaus version of the Keller-Segel chemotaxis system \cite{JaLu1992} is recovered from \eqref{tw} by setting $\nu\varepsilon=0$ and reads, since $v=u$ in that case,
\begin{subequations}\label{jl}
\begin{align}
\partial_t u & = \mathrm{div}\left( \nabla u - u \nabla w \right) \;\;\text{ in }\;\; (0,\infty)\times \Omega\ , \label{jl1}\\ 
0 & = D \Delta w - \langle u \rangle + u \;\;\text{ in }\;\; (0,\infty)\times \Omega\ , \label{jl2}
\end{align}
supplemented with no-flux boundary conditions for $u$ and $w$
\begin{equation}
\nabla u \cdot \mathbf{n} = \nabla w \cdot \mathbf{n} = 0 \;\;\text{ on }\;\;  (0,\infty)\times\partial\Omega\ , \label{jl3}
\end{equation}
the normalization condition for $w$
\begin{equation}
\langle w \rangle = 0 \;\;\text{ in }\;\;  (0,\infty)\ , \label{jl4}
\end{equation}
\end{subequations} 
and a non-negative initial condition $u^{in}$ for $u$. Observe that, in \eqref{jl}, the chemoattractant is produced directly by the species with density $u$, instead of being produced through another species as in \eqref{tw}. A first consequence of this feature is that all solutions to \eqref{tw} are global \cite[Theorem~1.1]{TW2017}. This global existence property contrasts markedly with the situation for \eqref{jl} for which the following is known: solutions to \eqref{jl} are global when either $\|u^{in}\|_1<4\pi D$ or $\Omega$ is a ball, $u^{in}$ is radially symmetric, and $\|u^{in}\|_1>8\pi D$ \cite{Bi1998, JaLu1992, Na1995}. Finite time blowup may take place when either $\|u^{in}\|_1>4\pi D$ or $\Omega$ is a ball, $u^{in}$ is radially symmetric, and $\|u^{in}\|_1>8\pi D$ \cite{JaLu1992, Na1995, Na2001, SeSu2003}, see also the survey \cite{Ho2003}.

A striking feature of the dynamics of \eqref{tw}, uncovered in \cite{TW2017}, is that, though all solutions to \eqref{tw} are global, a threshold phenomenon occurs in infinite time. More precisely, when $\Omega$ is a ball, say $\Omega=B_1(0) := \{ x\in \mathbb{R}^2\ :\ |x|<1\}$, and the initial conditions $(u^{in},v^{in})$ are radially symmetric, so that the solution $(u,v,w)$ of \eqref{tw} is also radially symmetric for all times, it is shown in \cite[Theorems~1.2 \&~1.3]{TW2017} that all (radially symmetric) solutions are actually bounded when $\|u^{in}\|_1<8\pi D$ while there are unbounded solutions emanating from initial conditions satisfying $\|u^{in}\|_1>8\pi D$, the $L^\infty$-norm of $u(t)$ growing up as $t\to\infty$ at an exponential rate. The approach of \cite{TW2017} exploits the classical fact that the specific structure of \eqref{tw} and the radial symmetry of the solutions allow one to reduce \eqref{tw} to a single nonlocal parabolic equation for the cumulative distribution function $U$ defined by 
\begin{equation*}
U(t,|x|) := \int_0^{\sqrt{|x|}} \sigma \bar{u}(t,\sigma)\ \mathrm{d}\sigma\ , \qquad (t,x)\in (0,\infty)\times B_1(0)\ ,
\end{equation*}
where $\bar{u}(t,|x|):=u(t,x)$ for $(t,x)\in (0,\infty)\times B_1(0)$. Even though the parabolic equation solved by $U$ involves a nonlocal term, it turns out that such a powerful tool as the comparison principle becomes available, see \cite[Lemma~4.2]{TW2017}, and the proofs of \cite[Theorems~1.2 \&~1.3]{TW2017} rely on the construction of suitable supersolutions and subsolutions. 

This technique is however restricted to \eqref{tw} in a radially symmetric setting and does not allow one to handle more general initial conditions or to consider an arbitrary domain $\Omega\subset \mathbb{R}^2$. Neither does it extend to the version of \eqref{tw} involving degradation of the chemoattractant
\begin{subequations}\label{twd}
\begin{align}
\partial_t u & = \mathrm{div}\left( \nabla u - u \nabla w \right) \;\;\text{ in }\;\; (0,\infty)\times \Omega\ , \label{twda}\\ 
\nu\varepsilon \partial_t v & =  u - v \;\;\text{ in }\;\; (0,\infty)\times \Omega\ , \label{twdb}\\
0 & = D \Delta w - \delta w + v \;\;\text{ in }\;\; (0,\infty)\times \Omega\ , \label{twdc}
\end{align}
\end{subequations} 
with $\delta>0$, supplemented with no-flux boundary conditions for $u$ and $w$ and non-negative initial conditions $(u^{in},v^{in})$ for $(u,v)$, nor to its parabolic counterpart
\begin{subequations}\label{a1}
\begin{align}
\partial_t u & = \mathrm{div}\left( \nabla u - u \nabla w \right) \;\;\text{ in }\;\; (0,\infty)\times \Omega\ , \label{a1a}\\ 
\nu\varepsilon \partial_t v & =  u - v \;\;\text{ in }\;\; (0,\infty)\times \Omega\ , \label{a1b}\\
\nu \partial_t w & = D \Delta w - \delta w + v \;\;\text{ in }\;\; (0,\infty)\times \Omega\ , \label{a1c}
\end{align}
supplemented with no-flux boundary conditions for $u$ and $w$
\begin{equation}
\left( \nabla u - u \nabla w \right)\cdot \mathbf{n} = \nabla w \cdot \mathbf{n} = 0 \;\;\text{ on }\;\;  (0,\infty)\times\partial\Omega\ , \label{a1d} 
\end{equation}
\end{subequations}
and initial conditions
\begin{equation}
(u,v,w)(0) = (u^{in}, v^{in},w^{in}) \;\;\text{ in }\;\; \Omega\ . \label{a2}
\end{equation}

The main contribution of this paper is to show that the systems \eqref{tw}, \eqref{twd}, and \eqref{a1} share the same infinite time threshold phenomenon uncovered in \cite{TW2017}, without restrictions on the two-dimensional domain $\Omega$ and the initial data. This requires a different argument and we actually show that \eqref{tw}, \eqref{twd}, and \eqref{a1} all possess a Liapunov functional, and that the properties of this Liapunov functional provide insight on the boundedness or unboundedness of the solutions. 

From now on, we shall focus on \eqref{a1}-\eqref{a2} and will return briefly to \eqref{tw} and \eqref{twd} in Section~\ref{s5}. Introducing 
\begin{equation}
L(r) := r \ln{r} - r + 1 \ge 0\ , \qquad r\ge 0\ , \label{a4}
\end{equation}
and
\begin{subequations}\label{a3}
\begin{align}
\mathcal{E}_0(u,w) & := \int_\Omega \left[ L(u) - uw \right]\ \mathrm{d}x + \frac{D}{2} \|\nabla w\|_2^2 + \frac{\delta}{2} \|w\|_2^2\ , \label{a3a} \\ 
\mathcal{E}(u,v,w) & := \mathcal{E}_0(u,w) + \frac{\varepsilon}{2} \| -D \Delta w + \delta w - v \|_2^2\ , \label{a3b}
\end{align}
\end{subequations}
the main observation is that $\mathcal{E}$ is a Liapunov functional for \eqref{a1}-\eqref{a2}, see Lemma~\ref{lemd2}. Interestingly, $\mathcal{E}_0$ is a Liapunov functional for the classical Keller-Segel system \cite{GZ1998, NSY1997}
\begin{subequations}\label{ks}
\begin{align}
\partial_t u & = \mathrm{div}\left( \nabla u - u \nabla w \right) \;\;\text{ in }\;\; (0,\infty)\times \Omega\ , \label{ks1}\\ 
\nu \partial_t w & = D \Delta w - \delta w + u \;\;\text{ in }\;\; (0,\infty)\times \Omega\ , \label{ks2} \\
& \nabla u \cdot \mathbf{n} = \nabla w \cdot \mathbf{n} = 0 \;\;\text{ on }\;\;  (0,\infty)\times\partial\Omega\ , \label{ks3} \\
& (u,w)(0) = (u^{in},w^{in}) \;\;\text{ in }\;\; \Omega\ , \label{ks4}
\end{align}
\end{subequations} 
and, as such, its properties have been thoroughly studied \cite{GZ1998, Ho2001, Ho2002, HW2001, NSY1997}. In particular, introducing the set 
\begin{equation}
\mathcal{A}_M := \left\{ (u,w)\in L_1(\Omega)\times W_2^1(\Omega)\ :\ u\ge 0\ , \ w\ge 0\ , \ \|u\|_1=M \right\} \label{a9}
\end{equation}
for $M\ge 0$, it is known that 
\begin{equation}
\begin{array}{lcl}
\displaystyle{\inf_{(u,w)\in \mathcal{A}_M} \mathcal{E}_0(u,w)} > - \infty  & \text{ if } & M\in [0,4\pi D] \ , \\
\displaystyle{\inf_{(u,w)\in \mathcal{A}_M} \mathcal{E}_0(u,w)} = - \infty  & \text{ if } & M >4\pi D \ , 
\end{array} \label{a10}
\end{equation}
a property which has far-reaching consequences on the dynamics of \eqref{ks}. Indeed, given $M>0$, global existence and blowup of solutions to \eqref{ks} are intimately related to the finiteness or not of the infimum of $\mathcal{E}_0$ on $\mathcal{A}_M$  \cite{GZ1998, Ho2001, Ho2002, HW2001, NSY1997}. As we shall see below, the property \eqref{a10} also plays an important role in the dynamics of \eqref{a1}-\eqref{a2}, a feature which is actually not so surprising as there is a close connection between \eqref{a1}-\eqref{a2} and \eqref{ks}. Indeed, \eqref{ks} can formally be derived from \eqref{a1}-\eqref{a2} by setting $\varepsilon=0$. 

We now describe our results on \eqref{a1}-\eqref{a2} and begin with its well-posedness. For $\theta\in (2/3,1)$ and $M\ge 0$, we set
\begin{equation}
\mathcal{I}_{M,\theta} := \left\{ (u,v,w)\in W_{3,\mathcal{B},+}^{2\theta}(\Omega)\times W_{3,\mathcal{B},+}^1(\Omega)\times W_{3,\mathcal{B},+}^2(\Omega)\ :\ \ \|u\|_1=M  \right\}\ , \label{d20}
\end{equation}
where
\begin{equation}
W_{r,\mathcal{B}}^{m}(\Omega) := \left\{ 
\begin{array}{ll}
\left\{ z \in W^{m}_{r}(\Omega)\ :\ \nabla z\cdot \mathbf{n} = 0 \;\text{ on }\; \partial\Omega \right\}\ , & 1+1/r < m \le 2\ , \\
& \\
W^{m}_{r}(\Omega) \ , & -1+1/r < m < 1+1/r \ , \\
& \\
W^{-s}_{r/(r-1)}(\Omega)' \ , & -2+1/r < s \le -1+1/r \ , 
\end{array}\right. \label{WmrB}
\end{equation} 
and 
\begin{equation}
W_{r,\mathcal{B},+}^{m}(\Omega) := \left\{ z\in W_{r,\mathcal{B}}^{m}(\Omega)\ :\ z \ge 0 \;\text{ in }\; \Omega\right\} \label{WmrBp}
\end{equation} 
for $r\in (1,\infty)$, see \cite[Section~5]{Am1993}. 

%%%%%%%%%%%%%%%%
\begin{theorem}\label{thma1}
Let $\theta\in (5/6,1)$ and consider initial conditions $(u^{in},v^{in},w^{in})\in \mathcal{I}_{M,\theta}$. Then the system \eqref{a1}-\eqref{a2} has a unique non-negative weak solution $(u,v,w)$ in $W^{1}_{3}$ defined on $[0,\infty)$ satisfying
\begin{align*}
u & \in C([0,\infty);W_{3,\mathcal{B}}^{2\theta}(\Omega)) \cap C^1([0,\infty);W^{2\theta-2}_{3,\mathcal{B}}(\Omega)) \ , \\
v & \in C^1([0,\infty);W^{1}_{3}(\Omega))\ , \\
w & \in C([0,\infty);W_{3,\mathcal{B}}^{2}(\Omega)) \cap C^1([0,\infty);L_3(\Omega)) \ ,
\end{align*}
and
\begin{equation}
\|u(t)\|_1 = M := \|u^{in}\|_1\ , \qquad t\ge 0\ . \label{a6}
\end{equation}
In particular, $(u(t),v(t),w(t))\in \mathcal{I}_{M,\theta}$ for all $t\ge 0$. Furthermore,
\begin{equation*}
u \in C((0,\infty);W_{3,\mathcal{B}}^{2}(\Omega)) \cap C^1((0,\infty);L_{3}(\Omega)) \ .
\end{equation*}
\end{theorem}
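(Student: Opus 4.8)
The plan is to construct the solution via a fixed point argument built on the parabolic structure of the $u$- and $w$-equations, with the $v$-equation being a linear ODE in time (pointwise in $x$) that is explicitly solvable once $u$ is known. Concretely, I would first reduce \eqref{a1b} by the variation-of-constants formula: given $u$, set
\begin{equation*}
v(t) = e^{-t/(\nu\varepsilon)} v^{in} + \frac{1}{\nu\varepsilon} \int_0^t e^{-(t-s)/(\nu\varepsilon)} u(s)\ \mathrm{d}s\ ,
\end{equation*}
which maps $C([0,T];W^1_3(\Omega))$-inputs for $u$ into $C^1([0,T];W^1_3(\Omega))$ for $v$ and is compatible with the regularity $u^{in}\in W^{2\theta}_{3,\mathcal B}(\Omega)\hookrightarrow W^1_3(\Omega)$ (here $2\theta > 5/3 > 1$). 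Then the $w$-equation \eqref{a1c} is a linear inhomogeneous heat equation with Neumann boundary data and source $v$; writing $A_w := -(D/\nu)\Delta + (\delta/\nu)$ with Neumann boundary conditions, which generates an analytic semigroup on $L_3(\Omega)$, I would express $w$ through its semigroup and use maximal $L_p$-regularity / analytic semigroup smoothing to get $w\in C([0,T];W^2_{3,\mathcal B}(\Omega))\cap C^1([0,T];L_3(\Omega))$ from $v\in C([0,T];W^1_3(\Omega))$ and $w^{in}\in W^2_{3,\mathcal B}(\Omega)$.

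The core of the argument is the quasilinear parabolic equation \eqref{a1a} for $u$, which I would treat as $\partial_t u = \mathrm{div}(\nabla u - u\nabla w)$ with $w$ regarded as a known coefficient of the above regularity. Since $\nabla w \in C([0,T];W^1_3(\Omega))\hookrightarrow C([0,T];C(\bar\Omega))$ in two dimensions, the operator $u\mapsto \mathrm{div}(\nabla u - u\nabla w)$ is, for fixed $w$, a uniformly elliptic operator in divergence form with continuous (indeed Hölder, after a small time) coefficients, and it generates a strongly continuous analytic semigroup on the appropriate interpolation scale; I would invoke Amann's theory (the reference \cite{Am1993} already cited for the function spaces) for quasilinear parabolic systems to get local existence, uniqueness, and the asserted continuity $u\in C([0,T];W^{2\theta}_{3,\mathcal B}(\Omega))\cap C^1([0,T];W^{2\theta-2}_{3,\mathcal B}(\Omega))$. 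Non-negativity of $u$ follows from the maximum/comparison principle for the (linear, once $w$ is frozen) equation, using $u^{in}\ge 0$; non-negativity of $v$ is then immediate from the variation-of-constants formula, and non-negativity of $w$ follows from the maximum principle applied to \eqref{a1c} with non-negative source $v$ and non-negative initial datum $w^{in}$. The conservation of mass \eqref{a6} is obtained by integrating \eqref{a1a} over $\Omega$ and using the no-flux boundary condition \eqref{a1d}, which kills the boundary term exactly.

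The fixed point is then set up as a map $\Phi$ on a ball in $C([0,T];W^{2\theta-2}_{3,\mathcal B}(\Omega))$ (or a slightly smaller space giving enough room for the nonlinearity): $u\mapsto v \mapsto w \mapsto \tilde u$, and one shows $\Phi$ is a contraction for $T$ small, using the Lipschitz dependence of each step on its input together with the smoothing estimates. This yields a unique local-in-time solution with the claimed regularity; the extra smoothing $u\in C((0,T];W^2_{3,\mathcal B}(\Omega))\cap C^1((0,T];L_3(\Omega))$ is then a standard parabolic bootstrap: away from $t=0$ the coefficient $\nabla w$ is more regular (by the $w$-smoothing), so one upgrades $u$ by reapplying analytic-semigroup regularity on $(\tau,T]$ for $\tau>0$.

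The genuine obstacle is \emph{globality}: extending the local solution to $[0,\infty)$ requires an a priori bound preventing finite-time blowup, and this is precisely where the Liapunov functional $\mathcal E$ of \eqref{a3b} (Lemma~\ref{lemd2}) and the criterion \eqref{a10} enter. The plan is to show: along the solution, $\mathcal E(u(t),v(t),w(t))$ is non-increasing; combined with the conserved mass $\|u(t)\|_1 = M$ and — crucially — the boundedness-from-below of $\mathcal E_0$ on $\mathcal A_M$ for \emph{all} $M$ (note that unlike \eqref{a10}, here one does not need $M\le 4\pi D$, because the extra term $\frac{\varepsilon}{2}\|-D\Delta w + \delta w - v\|_2^2$ in $\mathcal E$ must be controlled too, but the point is that the total functional together with mass conservation still yields enough control — this is the delicate estimate and likely where most of the work lies), one extracts uniform-in-time bounds on $\|\nabla w(t)\|_2$, $\|w(t)\|_2$, and on $\int_\Omega L(u(t))\,\mathrm{d}x$, hence on $\|u(t)\|_1\ln$-type quantities. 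From these, a parabolic bootstrap (iterating $L^p$ estimates for $u$ via the $w$-regularity, in the spirit of the Alikakos–Moser iteration but adapted to the indirect-production structure) upgrades to an $L^\infty$ bound on $u$ on every finite interval, which feeds back into the local existence theory to rule out blowup and thus continue the solution globally. The main difficulty throughout is that the indirect signal production (the intermediate species $v$) means $w$ is two derivatives away from $u$ rather than one, so the usual Keller–Segel estimates must be rerouted through the ODE for $v$ and the elliptic/parabolic regularity for $w$ — keeping track of the $\varepsilon$-dependent term in $\mathcal E$ is what makes this both possible and technically involved.
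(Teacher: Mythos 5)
Your local-existence scheme (solve the $v$-equation by variation of constants, smooth $w$ through the Neumann heat semigroup, treat \eqref{a1a} as a quasilinear parabolic problem and contract) is a viable alternative to the paper's use of Amann's theory for partially diffusive systems, and your treatment of non-negativity and of \eqref{a6} matches the paper. The genuine gap is in the globality step. You propose to rule out finite-time blowup by combining the monotonicity of $\mathcal{E}$ with mass conservation to extract \emph{uniform-in-time} bounds on $\|w\|_{W_2^1}$ and $\int_\Omega L(u)\,\mathrm{d}x$, asserting that, unlike \eqref{a10}, no restriction $M\le 4\pi D$ is needed because the extra term $\frac{\varepsilon}{2}\|-D\Delta w+\delta w-v\|_2^2$ "must be controlled too." This cannot work: that extra term is non-negative, so it only raises $\mathcal{E}$ and does nothing to compensate the problematic term $-\int_\Omega uw\,\mathrm{d}x$, whose absorption via Moser--Trudinger is exactly what forces $M<4\pi D$. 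Indeed, the paper proves (Proposition~\ref{propf2}) that $\mathcal{E}$ is \emph{not} bounded below on $\mathcal{I}_{M,\theta}$ once $M>4\pi D$, and Theorem~\ref{thma3} exhibits global solutions with $\limsup_{t\to\infty}\|u(t)\|_\infty=\infty$, so uniform-in-time bounds are simply false for large mass. Since Theorem~\ref{thma1} asserts global existence for \emph{every} $M$, your argument at best covers $M<4\pi D$ and leaves the main case open.

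What is actually needed is weaker than what you aim for and is obtained by a different mechanism: estimates that may grow with $T$ but are finite on every $[0,T]$. The paper's Lemma~\ref{lemc1} differentiates the functional $D\|L(u)\|_1+\frac{\nu}{2}\left(\varepsilon\|v\|_2^2+D\|\nabla w\|_2^2+\delta\|w\|_2^2+\nu^2\varepsilon\|\partial_t w\|_2^2\right)$ (not $\mathcal{E}$), rewrites the coupling term as $\int_\Omega u(v-\delta w)\,\mathrm{d}x$ using \eqref{a1c}, controls $\int_\Omega uv\,\mathrm{d}x$ by Gagliardo--Nirenberg together with only the conserved mass $\|u\|_1=M$ (absorbing $\|\nabla\sqrt{u}\|_2^2$ into the dissipation), and closes with Gronwall; the positivity of $\varepsilon$ is essential there, and no lower bound on any energy is required. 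One then bootstraps ($W_2^2$ for $w$, $L_r$ and then $L^\infty$ for $u$ via \cite[Lemma~A.1]{TW2012}, $W_\infty^1$ for $w$, $W_3^1$ for $u,v$) and proves H\"older-in-time continuity in $W_3^1$ so as to verify the continuation criterion \eqref{b2}. Your proposal would also need to make its continuation criterion explicit in whatever norm your fixed-point space uses; as written, the route from "Liapunov functional" to "no finite-time blowup for all $M$" is the missing idea, and it is replaced in the paper by the Gronwall-type finite-interval estimates just described.
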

%%%%%%%%%%%%%%%%

Several approaches may be used to deal with the well-posedness of \eqref{a1}-\eqref{a2}. Since \eqref{a1b} can be solved explicitly to find $v$ in terms of $u$, one possibility would be to solve the parabolic system \eqref{a1a}-\eqref{a1c}-\eqref{a1d}-\eqref{a2} with a source term which is nonlocal with respect to time. However, since an abstract theory is developed in \cite{Am1991} to handle systems coupling parabolic equations and ordinary differential equations, we rather follow this route which provides the local well-posedness of \eqref{a1}-\eqref{a2} in $W_3^1(\Omega)$ and may cope as well with nonlinear reaction terms in \eqref{a1b} or \eqref{a1c}. The parabolic regularising effects of both \eqref{a1a} and \eqref{a1c} are then used to prove that it is actually a strong solution for positive times. Several estimates are next needed to prove that the solution is global, the positivity of $\varepsilon$ being of utmost importance already in Lemma~\ref{lemc1}. 

Having established the global well-posedness of \eqref{a1}-\eqref{a2}, we next turn to qualitative information on the dynamics of \eqref{a1}-\eqref{a2} which is the main goal of this paper. We first study the boundedness of solutions, thereby extending \cite[Theorem~1.2]{TW2017} to \eqref{a1}-\eqref{a2} in an arbitrary domain $\Omega\subset \mathbb{R}^2$.

%%%%%%%%%%%%%%%%
\begin{theorem}\label{thma2}
Let $\theta\in (5/6,1)$ and consider initial conditions $(u^{in},v^{in},w^{in})\in \mathcal{I}_{M,\theta}$. We denote the corresponding solution to \eqref{a1}-\eqref{a2} given by Theorem~\ref{thma1} by $(u,v,w)$.
\begin{itemize}
	\item[(a)] Assume further that $M= \|u^{in}\|_1 \in (0,4\pi D)$. Then
	\begin{equation}
	\sup_{t\ge 0} \left\{ \|u(t)\|_\infty + \|v(t)\|_\infty + \|w(t)\|_{W_\infty^1} \right\} < \infty\ . \label{a7}
	\end{equation}
	\item[(b)] Assume further that $\Omega$ is the ball $B_r(0)$ of radius $r>0$ centered at $x=0$, $(u^{in},v^{in},w^{in})$ are radially symmetric, and $M= \|u^{in}\|_1 \in (0,8\pi D)$. Then \eqref{a7} also holds true.
\end{itemize} 
\end{theorem}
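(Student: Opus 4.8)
\emph{Set-up.} Everything hinges on the fact that $\mathcal{E}$ is a Liapunov functional for \eqref{a1}-\eqref{a2} (Lemma~\ref{lemd2}): the map $t\mapsto \mathcal{E}(u(t),v(t),w(t))$ is non-increasing, so that, since $\mathcal{E}-\mathcal{E}_0\ge 0$,
\[
\mathcal{E}_0(u(t),w(t)) \le \mathcal{E}(u(t),v(t),w(t)) \le \mathcal{E}(u^{in},v^{in},w^{in}) =: \mathcal{E}^{in}\ , \qquad t\ge 0\ .
\]
Moreover, $-D\Delta w + \delta w - v = -\nu\partial_t w$ by \eqref{a1c}, so that once $\mathcal{E}_0$ is shown to be bounded from below along the orbit, the difference $\mathcal{E}-\mathcal{E}_0$ also provides the uniform-in-time estimate $\sup_{t\ge 0}\|\partial_t w(t)\|_2 < \infty$. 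The plan is thus, first, to convert the bound on $\mathcal{E}_0(u(t),w(t))$ into uniform bounds on $\int_\Omega L(u(t))\,\mathrm{d}x$ and $\|w(t)\|_{W_2^1}$ and, second, to bootstrap these (together with the structure of \eqref{a1}-\eqref{a2}) up to the $L_\infty$-bounds \eqref{a7}.

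\emph{Step 1: a refined lower bound for $\mathcal{E}_0$.} The convex duality between $L$ and $s\mapsto e^s-1$ gives $L(r)\ge rs - e^s + 1$ for $r\ge 0$ and $s\in\RR$; combined with Jensen's inequality applied with the probability measure $e^{w/\lambda}\,\mathrm{d}x\big/\!\int_\Omega e^{w/\lambda}\,\mathrm{d}x$ and a Moser--Trudinger inequality on the two-dimensional domain $\Omega$ — recall there is $C_{\mathrm{MT}}(\Omega)$ such that $\ln\int_\Omega e^\varphi\,\mathrm{d}x \le \kappa\|\nabla\varphi\|_2^2 + \langle\varphi\rangle + C_{\mathrm{MT}}$ for $\varphi\in W_2^1(\Omega)$, with $\kappa=1/8\pi$ in general and $\kappa=1/16\pi$ when $\Omega$ is a ball and $\varphi$ is radially symmetric — one obtains, for $\lambda\in(0,1)$ and $(u,w)\in\mathcal{A}_M$,
\[
\int_\Omega \big[ L(u) - uw \big]\,\mathrm{d}x \ge (1-\lambda)\int_\Omega L(u)\,\mathrm{d}x - \frac{\kappa M}{\lambda}\|\nabla w\|_2^2 - M\langle w\rangle - C(M,\lambda)\ .
\]
Absorbing $-M\langle w\rangle$ into $\tfrac{\delta}{2}\|w\|_2^2$ at the expense of a constant and recalling $L\ge 0$, this yields
\[
\mathcal{E}_0(u,w) \ge (1-\lambda)\int_\Omega L(u)\,\mathrm{d}x + \Big(\frac{D}{2} - \frac{\kappa M}{\lambda}\Big)\|\nabla w\|_2^2 + \frac{\delta}{4}\|w\|_2^2 - C\ , \qquad (u,w)\in\mathcal{A}_M\ .
\]
The coefficient $D/2-\kappa M/\lambda$ is positive for some $\lambda<1$ precisely when $M<D/(2\kappa)$, i.e. when $M<4\pi D$ in case~(a), and when $M<8\pi D$ in case~(b) — in the latter, $w(t)$ is radially symmetric for all $t\ge 0$ (the flow of \eqref{a1}-\eqref{a2} preserves radial symmetry, by the uniqueness part of Theorem~\ref{thma1}), so the improved constant $\kappa=1/16\pi$ is available. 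In either case one fixes such a $\lambda$ and concludes $\inf_{\mathcal{A}_M}\mathcal{E}_0>-\infty$, hence, with the set-up above,
\[
\sup_{t\ge 0}\Big\{ \int_\Omega L(u(t))\,\mathrm{d}x + \|w(t)\|_{W_2^1}^2 + \|\partial_t w(t)\|_2^2 \Big\} \le K_1 < \infty\ .
\]

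\emph{Step 2: bootstrap to $L_\infty$.} Solving the linear ODE \eqref{a1b} writes $v(t)$ as an average of $\{u(s)\}_{0\le s\le t}$ and $v^{in}$, whence $\|v(t)\|_p \le \max\{\|v^{in}\|_p, \sup_{0\le s\le t}\|u(s)\|_p\}$ for $p\in[1,\infty]$; thus $v$ inherits the Lebesgue integrability of $u$. Next, representing $w$ by Duhamel's formula with the semigroup generated by $\nu^{-1}(D\Delta-\delta)$ under Neumann boundary conditions — which decays exponentially because $\delta>0$, and this is where the positivity of $\delta$ is used to get \emph{time-uniform} bounds — the associated smoothing estimates give $\sup_t\|\nabla w(t)\|_q \le C(\|\nabla w^{in}\|_q + \sup_t\|v(t)\|_q)$ for $q\in(1,\infty)$, and $\sup_t\|\nabla w(t)\|_\infty<\infty$ once $\sup_t\|v(t)\|_q<\infty$ for some $q>2$. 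Finally, testing \eqref{a1a} with $pu^{p-1}$ and using \eqref{a1c} gives, for $p>1$,
\[
\frac1p\frac{\mathrm{d}}{\mathrm{d}t}\|u\|_p^p + \frac{4(p-1)}{p^2}\|\nabla u^{p/2}\|_2^2 = \frac{p-1}{pD}\int_\Omega u^p\big( v - \delta w - \nu\partial_t w \big)\,\mathrm{d}x\ .
\]
Since $u\ge 0$ and $w\ge 0$, the term with $\delta w$ is non-positive and is discarded; the terms with $v$ and $\partial_t w$ are controlled by Hölder's and Gagliardo--Nirenberg inequalities, using the already established control of $\|v\|_q$ (equivalently $\|u\|_q$) at exponents below the current level and, crucially, the uniform bound on $\|\partial_t w\|_2$ from Step~1. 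Together with the Gagliardo--Nirenberg inequality $\|u\|_p^p\le \varepsilon_0\|\nabla u^{p/2}\|_2^2 + C(\varepsilon_0,p,M)$ (which exploits $\|u\|_1\equiv M$), this produces a differential inequality of the form $\tfrac{\mathrm{d}}{\mathrm{d}t}\|u\|_p^p \le -\|u\|_p^p + C$, hence $\sup_t\|u(t)\|_p<\infty$. Starting from the $L\log L$-bound of Step~1 and iterating, $\sup_t\|u(t)\|_p<\infty$ for every $p\in[1,\infty)$; then $\sup_t\|v(t)\|_p<\infty$, and choosing $p>2$ gives $\sup_t\|\nabla w(t)\|_\infty<\infty$, so \eqref{a1a} becomes a drift--diffusion equation with bounded drift and a Moser iteration yields $\sup_t\|u(t)\|_\infty<\infty$; the remaining bounds $\sup_t\|v(t)\|_\infty<\infty$ and $\sup_t\|w(t)\|_{W_\infty^1}<\infty$ follow, which is \eqref{a7}.

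\emph{Main obstacle.} The delicate point is the very first increment of the iteration in Step~2, from the uniform $L\log L$-bound on $u$ to a uniform $L_p$-bound for some $p>1$: at that stage $\nabla w$ is controlled only in $L_\infty((0,\infty);L_2(\Omega))$ and $v$ only in $L_\infty((0,\infty);L\log L)$ — exactly the critical regularities — so the coupling term in the $\|u\|_p^p$-identity must be handled with genuine care, in contrast with the elliptic models \eqref{tw} and \eqref{twd}, for which $\Delta w$ is expressed directly in terms of $v$ (and $w$) and the argument is more routine. Here the extra term in $\mathcal{E}$ coming from $\varepsilon>0$, which controls $\|{-}D\Delta w+\delta w-v\|_2 = \nu\|\partial_t w\|_2$ (and whose dissipation yields the additional space-time bound $\partial_t w\in L_2((0,\infty);W_2^1(\Omega))$), is what makes the estimate of the term involving $\Delta w$ tractable; it is also where the positivity of $\varepsilon$ is decisive, already for global existence. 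The thresholds $4\pi D$ and $8\pi D$ enter precisely through the sign condition $D/2 - \kappa M/\lambda>0$ of Step~1.
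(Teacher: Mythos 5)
Your Step~1 is sound and is essentially the paper's argument (Lemmas~\ref{lemd3}--\ref{lemd4}): Liapunov monotonicity, the relative-entropy/Moser--Trudinger lower bound with the radial improvement for case~(b), keeping a fraction $(1-\lambda)$ of the entropy is a harmless variant of the paper's second duality step, and the bound on $\mathcal{E}-\mathcal{E}_0$ indeed gives $\sup_t\|\partial_t w(t)\|_2<\infty$. The gap is in Step~2, precisely at the increment you yourself flag as delicate, and it is not merely a matter of ``care'': with only the uniform $L\ln L$ bound on $u$ (hence on $v$), H\"older plus Gagliardo--Nirenberg does \emph{not} close the $\|u\|_p^p$-estimate. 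Indeed, the critical term $\int_\Omega u^p v\,\mathrm{d}x$ leads, after Gagliardo--Nirenberg, to a contribution of the form $C\,\|u\|_1\,\|\nabla u^{p/2}\|_2^2$ whose coefficient is proportional to $M$ and is in no way small; absorbing it into the dissipation would then only work for small mass, whereas the theorem must cover every $M<4\pi D$ (resp. $M<8\pi D$). The paper closes this step (Lemma~\ref{lemd5}) with the Biler--Hebisch--Nadzieja inequality \eqref{d11}, $\|z\|_3^3\le \eta\|z\|_{W_2^1}^2\|z\ln|z|\|_1+\kappa_\eta\|z\|_1$, which converts the \emph{bounded entropy} into an arbitrarily small prefactor $\eta\, b_{\ref{cstb2}}$ in front of $\|u\|_{W_2^1}^2$, together with the coupled functional $Y=\|u\|_2^2+\nu\varepsilon\|v\|_3^3$ and the damping coming from the $v$-equation; none of this (or an equivalent device) appears in your proposal.

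A second, related flaw: the differential inequality you assert, $\tfrac{\mathrm{d}}{\mathrm{d}t}\|u\|_p^p\le -\|u\|_p^p+C$, cannot be derived from the \emph{uniform} bound on $\|\partial_t w\|_2$ alone. After Young's inequality the term $\tfrac{\nu(p-1)}{pD}\int u^p\partial_t w\,\mathrm{d}x$ produces a contribution $C\bigl(1+\|\partial_t w\|_2^2\bigr)\|u\|_p^p$ whose coefficient, though bounded, may exceed the Poincar\'e-type damping constant, so the sign of the right-hand side is not under control. What rescues the argument in the paper is the \emph{space-time} bound $\int_0^\infty\|\partial_t w(s)\|_2^2\,\mathrm{d}s<\infty$ supplied by the dissipation $\mathcal{D}$ in Lemma~\ref{lemd4}, which is used in the exponential-weight Gronwall computation \eqref{d15}--\eqref{d16} to recover a time-independent bound on $Y$. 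You mention this space-time bound only parenthetically and invoke instead the uniform bound as the ``crucial'' input; as written, the first increment of your bootstrap does not go through. Once a uniform $L_2\times L_3$ bound on $(u,v)$ is secured, the rest of your Step~2 (semigroup estimates for $\nabla w$ using $\delta>0$, then $L_\infty$ for $u$ by Moser iteration or the Tao--Winkler lemma, then $v$ and $w$) matches the paper and is fine.
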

%%%%%%%%%%%%%%%%

The proof of Theorem~\ref{thma2}~(a) relies on the properties of the Liapunov functional $\mathcal{E}$ (defined in \eqref{a3b}) which are derived from the properties of $\mathcal{E}_0$ (defined in \eqref{a3a}) \cite{GZ1998, NSY1997}. The main observation is that, when $(u,w)$ belongs to the set $\mathcal{A}_M$ defined in \eqref{a9} and $M\in (0,4\pi D)$, it follows from the Moser-Trudinger inequality \cite{CY1988} that 
\begin{equation*}
\mathcal{E}_0(u,w)\ge \frac{4\pi D - M}{8\pi} \|\nabla w\|_2^2 + \frac{\delta}{2} \|w\|_2^2 - b
\end{equation*}
for some constant $b\ge 0$ which depends only on $\Omega$, $D$, $\delta$, $M$, and $\|w\|_1$. Therefore, an upper bound on $\mathcal{E}_0(u,w)$ entails an estimate on the $W_2^1$-norm of $w$ which, in turn, allows us to derive further estimates and deduce the boundedness of the solution. Similar arguments are used to prove Theorem~\ref{thma2}~(b).

The final result deals with the unboundedness of some solutions provided $\|u^{in}\|_1$ is large enough.

%%%%%%%%%%%%%%%%
\begin{theorem}\label{thma3}
Let $\theta\in (5/6,1)$.
\begin{itemize}
		\item [(a)] Given $M\in (4\pi D,\infty) \setminus 4\pi D \mathbb{N}$, there are non-negative functions $(u^{in},v^{in},w^{in})\in \mathcal{I}_{M,\theta}$ for which the corresponding solution $(u,v,w)$ to \eqref{a1}-\eqref{a2} given by Theorem~\ref{thma1} satisfies
		\begin{equation}
		\limsup_{t\to\infty} \|u(t)\|_\infty = \infty\ . \label{a8}
		\end{equation}
		\item [(b)] Assume further that $\Omega$ is the ball $B_r(0)$ of radius $r>0$ centered at $x=0$. Given $M\in (8\pi D,\infty)$, there are non-negative radially symmetric functions $(u^{in},v^{in},w^{in})\in \mathcal{I}_{M,\theta}$ for which the corresponding solution $(u,v,w)$ to \eqref{a1}-\eqref{a2} given by Theorem~\ref{thma1} satisfies \eqref{a8}.
	\end{itemize}
\end{theorem}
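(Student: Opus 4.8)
\emph{Strategy.} Since $\mathcal{E}$ is a Liapunov functional for \eqref{a1}-\eqref{a2} (Lemma~\ref{lemd2}) and, by \eqref{a10}, $\mathcal{E}_0$ is unbounded from below on $\mathcal{A}_M$ as soon as $M>4\pi D$, the plan is to start from initial data whose energy lies strictly below the energy of every stationary solution of mass $M$ and then to rule out boundedness via the LaSalle invariance principle. For $u\in W_{3,\mathcal{B},+}^{2\theta}(\Omega)$ with $\|u\|_1=M$, let $W[u]\in W_{3,\mathcal{B},+}^{2}(\Omega)$ be the non-negative (by the maximum principle) solution of $-D\Delta W[u]+\delta W[u]=u$ in $\Omega$ with $\nabla W[u]\cdot\mathbf{n}=0$ on $\partial\Omega$; it is the minimiser over $W_2^1(\Omega)$ of the strictly convex coercive functional $w\mapsto\mathcal{E}_0(u,w)$, and the last term in \eqref{a3b} vanishes for the triple $(u,u,W[u])$, so $\mathcal{E}(u,u,W[u])=\mathcal{E}_0(u,W[u])=\min_w\mathcal{E}_0(u,w)$. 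Taking the infimum over such $u$, invoking \eqref{a10} together with the density of smooth densities with vanishing normal derivative and the continuity of $u\mapsto\mathcal{E}_0(u,W[u])$, we obtain that for $M>4\pi D$ the quantity $\inf\{\mathcal{E}(u,u,W[u]):u\in W_{3,\mathcal{B},+}^{2\theta}(\Omega),\ \|u\|_1=M\}$ equals $\inf_{\mathcal{A}_M}\mathcal{E}_0=-\infty$; in the radially symmetric setting in a ball this infimum is already $-\infty$ when $M>8\pi D$, by the radial improvement (with constant $8\pi D$) of the Moser--Trudinger inequality.

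\emph{A lower bound on the stationary energy.} Let $\mathcal{S}_M$ be the set of stationary solutions $(u_\star,v_\star,w_\star)$ of \eqref{a1}-\eqref{a1d} with $\|u_\star\|_1=M$. Any such triple satisfies $v_\star=u_\star$, $-D\Delta w_\star+\delta w_\star=u_\star$, $u_\star=\lambda_\star e^{w_\star}$ with $\lambda_\star\int_\Omega e^{w_\star}\,\mathrm{d}x=M$, and hence $\mathcal{E}(u_\star,v_\star,w_\star)=\mathcal{E}_0(u_\star,w_\star)$. The claim is that
\[
\mathcal{E}_*(M):=\inf\left\{\mathcal{E}_0(u_\star,w_\star)\ :\ (u_\star,v_\star,w_\star)\in\mathcal{S}_M\right\}>-\infty
\]
whenever $M\in(4\pi D,\infty)\setminus 4\pi D\NN$ (with the convention $\inf\varnothing=+\infty$). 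This is where the exceptional set $4\pi D\NN$ enters: by the Brezis--Merle/Li--Shafrir quantization for the planar equation $-D\Delta w+\delta w=\lambda e^w$ under homogeneous Neumann boundary conditions, a sequence in $\mathcal{S}_M$ can only lose compactness by concentrating, at finitely many points of $\overline{\Omega}$, a mass belonging to $4\pi D\NN$ (mass $8\pi D$ at interior points, $4\pi D$ at boundary points), which is incompatible with $M\notin 4\pi D\NN$; hence $\mathcal{S}_M$ is compact in $C(\overline{\Omega})^3$ and $\mathcal{E}_0$ is bounded on it. For part~(b), a radial sequence can concentrate only at the centre, with mass $8\pi D$, so the same compactness and lower bound hold for every $M\neq 8\pi D$, in particular for $M>8\pi D$.

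\emph{Conclusion by contradiction.} Fix $(u^{in},v^{in},w^{in})=(u^{in},u^{in},W[u^{in}])\in\mathcal{I}_{M,\theta}$ with $\mathcal{E}(u^{in},v^{in},w^{in})<\mathcal{E}_*(M)$, which is possible by the first paragraph, and let $(u,v,w)$ be the corresponding global solution of Theorem~\ref{thma1}. Suppose, for contradiction, that $\sup_{t\ge0}\|u(t)\|_\infty<\infty$. Solving \eqref{a1b} explicitly then bounds $\sup_{t\ge0}\|v(t)\|_\infty$, the parabolic regularising effect of \eqref{a1c}-\eqref{a1d} bounds $\sup_{t\ge0}\|w(t)\|_{W_\infty^1}$, and a further bootstrap makes $\{(u(t),v(t),w(t)):t\ge1\}$ relatively compact in the phase space of Theorem~\ref{thma1}. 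Since $L\ge0$ by \eqref{a4}, every term of $\mathcal{E}$ other than $-\int_\Omega uw\,\mathrm{d}x$ is non-negative, so $t\mapsto\mathcal{E}(u(t),v(t),w(t))$ is bounded from below; being also non-increasing (Lemma~\ref{lemd2}), it converges to some $\mathcal{E}_\infty\in\RR$, and its dissipation is integrable over $(1,\infty)$. The LaSalle invariance principle then implies that the $\omega$-limit set of the trajectory is non-empty, compact, and made of stationary solutions on which $\mathcal{E}\equiv\mathcal{E}_\infty$, all of mass $M$ by \eqref{a6}, hence contained in $\mathcal{S}_M$. Picking $(u_\infty,v_\infty,w_\infty)$ in this set yields the contradiction
\[
\mathcal{E}_*(M)\le\mathcal{E}_0(u_\infty,w_\infty)=\mathcal{E}(u_\infty,v_\infty,w_\infty)=\mathcal{E}_\infty\le\mathcal{E}(u^{in},v^{in},w^{in})<\mathcal{E}_*(M)\ .
\]
Therefore $\sup_{t\ge0}\|u(t)\|_\infty=\infty$, which, since the solution is global, is exactly \eqref{a8}. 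Part~(b) follows by the same argument carried out inside the flow-invariant class of radially symmetric triples, using the radial versions of \eqref{a10} and of the stationary lower bound above. The heart of the proof — and its main difficulty — is the compactness of $\mathcal{S}_M$ and the resulting bound $\mathcal{E}_*(M)>-\infty$: this rests on the concentration--compactness analysis of the planar elliptic problem $-D\Delta w+\delta w=\lambda e^w$ with Neumann data and on the sharp quantization of the masses carried by its possible blow-up points, which is precisely what forces the removal of $4\pi D\NN$ in part~(a) and permits the clean threshold $M>8\pi D$ in part~(b).
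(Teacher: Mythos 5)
Your overall architecture coincides with the paper's: make $\mathcal{E}$ arbitrarily negative on admissible data of mass $M$, prove a lower bound for $\mathcal{E}$ on the stationary set $\mathcal{S}_M$, and combine the Liapunov property with stabilization of bounded trajectories to get a contradiction; these are exactly the roles of the paper's Propositions~\ref{propf2}, \ref{propf1} and the stabilization result of Section~\ref{s3.3}. Your construction of low-energy data via $v=u$, $w=W[u]$ (so the $\varepsilon$-term vanishes and $\mathcal{E}_0(u,W[u])=\min_w\mathcal{E}_0(u,\cdot)$, then invoke \eqref{a10}) is a clean variant of the paper's approximation of concentrating bubbles, and replacing the paper's sequential stabilization by a LaSalle argument is harmless once compactness of the trajectory is in hand. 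One small misattribution: in the radial setting the unboundedness from below for $M>8\pi D$ does not follow from the radial Moser--Trudinger inequality itself (which yields lower bounds), but from an explicit radially concentrating family, as in the paper's use of \cite[Lemma~2]{Ho2002}.

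The genuine gap lies in the step you yourself identify as the heart of the proof, namely $\mathcal{E}_*(M)>-\infty$. In case~(a), quantization gives that each blow-up point of a non-compact sequence in $\mathcal{S}_M$ carries a mass in $4\pi D\NN$, but to conclude $M\in 4\pi D\NN$ you must also exclude a nonzero residual, non-concentrated part of the mass in the limit; this requires the Brezis--Merle-type alternative adapted to the Neumann, nonlocal problem with the $\delta w$ term, which is precisely what the cited \cite[Lemma~3.5]{HW2001} packages and your sketch does not supply. More seriously, in case~(b) the claim that a radial sequence can only concentrate at the centre with mass exactly $8\pi D$, hence compactness for every $M\neq 8\pi D$, is unjustified and cannot be correct as stated: quantization does not exclude multiple-bubble concentration at the origin carrying $8\pi D m$, so your argument gives nothing for $M\in 8\pi D\NN$ with $M>8\pi D$ (e.g.\ $M=16\pi D$), a case Theorem~\ref{thma3}~(b) must cover; the remark in Section~\ref{s5} that for \eqref{tw} the radial lower bound is only available off $8\pi D\NN$ shows this point is genuinely delicate and model-dependent. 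The paper sidesteps it by invoking \cite[Corollary~3.7 \& Remark~3.8]{HW2001}, valid for all $M>8\pi D$; your proof needs either that citation or an actual argument excluding higher radial concentration for \eqref{a1}.
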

%%%%%%%%%%%%%%%%

 On the one hand, Theorem~\ref{thma3} extends \cite[Theorem~1.3]{TW2017} to \eqref{a1}-\eqref{a2} in an arbitrary domain $\Omega\subset \mathbb{R}^2$ and, when $\Omega$ is a ball and the initial data are radially symmetric, applying the same approach to \eqref{tw} is likely to provide an alternative proof of \cite[Theorem~1.3]{TW2017}. It however only gives the unboundedness of the solution, while \cite[Theorem~1.3]{TW2017} guarantees an exponential growth of the $L_\infty$-norm of $u$. On the other hand, Theorem~\ref{thma3} is also a consequence of the availability of a Liapunov functional and its properties, the latter being established in \cite{Ho2001, Ho2002, HW2001}. Roughly speaking, the proof of Theorem~\ref{thma3}  involves three steps and proceeds along the lines of \cite{HW2001}. We first show that, given a solution $(u,v,w)$ to \eqref{a1}-\eqref{a2} such that $u\in L_\infty((0,\infty)\times\Omega)$, then there are a sequence $(t_k)_{k\ge 1}$, $t_k\to\infty$, of positive real numbers and a stationary solution $(u_*,v_*,w_*)$ to \eqref{a1} such that
 \begin{equation}
 \mathcal{E}(u_*,v_*,w_*) \le \liminf_{k\to\infty} \mathcal{E}(u(t_k),v(t_k),w(t_k))\ . \label{a11}
 \end{equation}
 It next follows from \cite[Lemma~3.5]{HW2001} that there is $\mu_M\ge 0$ depending only on the parameters in \eqref{a1} and $M$ such that
 \begin{equation}
 \mathcal{E}(u_*,v_*,w_*) \ge -\mu_M\ . \label{a12}
 \end{equation}
 It now readily follows from \eqref{a11}, \eqref{a12}, and the fact that $\mathcal{E}$ is a Liapunov functional that, if $\mathcal{E}(u^{in},v^{in},w^{in})<-\mu_M$, then $(u,v,w)$ cannot be bounded. The last step is to check that such initial conditions exist, but this is again a consequence of the analysis performed in \cite{Ho2001, Ho2002, HW2001}.
 
%%%%%%%%%%%%%%%%
%%%%%%%%%%%%%%%%
\section{Global existence}\label{s2}
%%%%%%%%%%%%%%%%
%%%%%%%%%%%%%%%%

%%%%%%%%%%%%%%%%
%%%%%%%%%%%%%%%%
\subsection{Local existence}\label{s2.1}
%%%%%%%%%%%%%%%%
%%%%%%%%%%%%%%%%

We begin with the local existence of solutions to \eqref{a1}-\eqref{a2} which is a consequence of the well-posedness theory developped in	\cite{Am1991} for partially diffusive systems. 

%%%%%%%%%%%%%%%%
\begin{proposition}\label{propb1}
Given non-negative initial conditions $(u^{in},v^{in},w^{in})$ in $W^{1}_{3}(\Omega;\mathbb{R}^3)$, the system \eqref{a1}-\eqref{a2} has a unique non-negative weak solution $(u,v,w)$ in $W^{1}_{3}$ defined on a maximal time interval $[0,T_m)$, $T_m\in (0,\infty]$, satisfying
\begin{align*}
(u,w) & \in C\left( [0,T_m);W^{1}_{3}(\Omega;\mathbb{R}^2) \right)\cap C^1\left( [0,T_m);W^{1}_{3/2}(\Omega;\mathbb{R}^2)' \right)\ , \\
v & \in C^1([0,T_m);W^{1}_{3}(\Omega))\ ,
\end{align*}
and
\begin{equation}
\|u(t)\|_1 = M := \|u^{in}\|_1\ , \qquad t\in [0,T_m)\ . \label{b1}
\end{equation}
Moreover, if there is $T>0$ such that
\begin{equation}
(u,v,w)\in BUC\left( [0,T]\cap [0,T_m); W^{1}_{3}(\Omega;\mathbb{R}^3) \right)\ , \label{b2}
\end{equation}
then $T_m\ge T$. 
\end{proposition}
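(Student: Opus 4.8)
The plan is to invoke the abstract well-posedness theory of \cite{Am1991} for triangular systems coupling a (partially) diffusive parabolic part with an ordinary differential equation. First I would recast \eqref{a1a}, \eqref{a1c}--\eqref{a1d}, \eqref{a2} as a quasilinear parabolic problem for the pair $(u,w)$, treating $v$ as a parameter entering through the zeroth-order term $v$ in \eqref{a1c}, while \eqref{a1b} is a linear ODE (in time, pointwise in $x$) for $v$ driven by $u$. In the notation of \cite{Am1991}, the diffusion matrix is lower triangular and degenerate in the $v$-component, which is exactly the ``partially diffusive'' setting treated there; the nonlinearity in the $u$-equation is $\mathrm{div}(u\nabla w)$, which is linear in $\nabla u$ with coefficient depending on $\nabla w$, hence locally Lipschitz on bounded sets of $W^1_3(\Omega)$ by Sobolev embedding in dimension two (so that $\nabla w \in L_3 \hookrightarrow$ a space in which the products are controlled). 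Applying the local existence and uniqueness theorem of \cite{Am1991} in the base space $W^1_3(\Omega;\RR^3)$ yields a unique maximal solution with the stated continuity and $C^1$-regularity classes, together with the blow-up criterion: the solution extends as long as it remains bounded in $W^1_3(\Omega;\RR^3)$, which is precisely the last assertion \eqref{b2}$\Rightarrow T_m\ge T$.

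Second, I would establish non-negativity. For $v$ this is immediate from the explicit solution of the ODE \eqref{a1b},
\[
v(t,x) = e^{-t/(\nu\varepsilon)} v^{in}(x) + \frac{1}{\nu\varepsilon}\int_0^t e^{-(t-s)/(\nu\varepsilon)} u(s,x)\,\mathrm{d}s,
\]
provided $u\ge 0$; for $u$ and $w$ one argues by the parabolic comparison/maximum principle, using that $0$ is a subsolution of the $u$-equation (no-flux form) and that the elliptic-type coupling through $w$ preserves signs — this is standard for Keller--Segel-type systems and I would either cite it from \cite{Am1993} or run a short Stampacchia/truncation argument on the coupled system, bootstrapping: $v^{in}\ge0$ and $u^{in}\ge0$ give $v\ge0$ on a short interval, then $w\ge0$ via \eqref{a1c} with nonnegative source and Neumann data, then $u\ge0$ because the drift term $-u\nabla w$ has the structure that $0$ is a subsolution, and finally close the loop on $[0,T_m)$.

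Third, I would prove the mass conservation \eqref{b1}: integrate \eqref{a1a} over $\Omega$, and the no-flux boundary condition \eqref{a1d} kills the boundary term, so $\frac{\mathrm d}{\mathrm dt}\int_\Omega u\,\mathrm dx = 0$; combined with $u\ge0$ this gives $\|u(t)\|_1 = \|u^{in}\|_1 = M$. (One must be slightly careful that $u(t)\in W^1_3(\Omega)\hookrightarrow L_1(\Omega)$ and that the weak formulation legitimately allows the test function $1$, but this is routine given the regularity classes above.)

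The main obstacle is the verification that the problem genuinely fits the structural hypotheses of \cite{Am1991} — in particular checking the triangular/normally elliptic structure with the degenerate $v$-block, identifying the correct interpolation/extrapolation scale so that the base space $W^1_3$ is admissible and the nonlinearity $\mathrm{div}(u\nabla w)$ defines a locally Lipschitz map into the appropriate negative-order space, and confirming that the abstract blow-up alternative of \cite{Am1991} delivers exactly the norm in \eqref{b2}. Everything else (non-negativity via comparison, mass conservation via integration, the explicit ODE solve for $v$) is comparatively routine. I would therefore spend most of the proof carefully translating \eqref{a1}--\eqref{a2} into the framework of \cite{Am1991} and then quoting its main theorem, relegating the positivity and conservation arguments to a few lines.
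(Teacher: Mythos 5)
Your proposal follows essentially the same route as the paper: recast \eqref{a1}--\eqref{a2} as a partially diffusive system in the sense of \cite{Am1991} with parabolic block $(u,w)$ and ODE component $v$, invoke the local well-posedness theorem there (in the base space $W^1_3$) together with its continuation criterion to get existence, uniqueness, the stated regularity, and the implication \eqref{b2} $\Rightarrow T_m\ge T$, and then obtain non-negativity by comparison plus the explicit formula for $v$, and \eqref{b1} by integrating \eqref{a1a} with the no-flux condition. Only cosmetic differences: the paper places the cross-diffusion $\mathrm{div}(u\nabla w)$ inside the (triangular, normally elliptic) quasilinear principal part rather than treating it as a locally Lipschitz perturbation, the continuation criterion genuinely requires $BUC$ in $W^1_3$ (not mere boundedness, which is why H\"older-in-time estimates are needed later), and non-negativity of $u$ follows directly from the comparison principle applied to \eqref{a1a}, irrespective of the sign of $w$, so no bootstrap loop is needed.
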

%%%%%%%%%%%%%%%%

\begin{proof}
In order to cast \eqref{a1}-\eqref{a2} in a form suitable to apply the abstract theory developed in \cite{Am1991}, we set $u_1=u$, $u_2=w$, $u_3=v$, $U:=(u_1,u_2,u_3)$, $U^1:=(u_1,u_2)$, and $U^2:=(u_3)$, and define matrix-valued functions $(a_{j,k}^1)_{1\le j,k\le 2}$, $(a_{j,k}^2)_{1\le j,k\le 2}$ in $\mathcal{M}_2(\mathbb{R})$ and vector-valued functions $(a_j^1)_{1\le j \le 2}$, $(a_j^2)_{1\le j \le 2}$ in $\mathbb{R}^2$ by
\begin{align*}
a_{1,1}^1(U) = a_{2,2}^1(U) := A(U)\ , \qquad a_{1,2}^1(U) = a_{2,1}^1(U) := 0\ , \qquad U\in\mathbb{R}^3\ ,\\
a_{j,k}^2(U) = 0\ , \qquad a_j^1(U) = a_j^2(U) := 0\ , \qquad 1\le j,k\le 2\ , \ U\in\mathbb{R}^3\ ,
\end{align*} 
the $2\times 2$-matrix $A(U)$ being given by
\begin{equation*}
A(U) := \begin{pmatrix}
1 & -u_1 \\
0 & D/\nu
\end{pmatrix}
\end{equation*}
For $V=(v_1,v_2,v_3) \in\mathbb{R}^3$, we define the operators $\mathcal{A}^1(V)$, $\mathcal{A}^2(V)$ and the boundary operators $\mathcal{B}^1(V)$, $\mathcal{B}^2(V)$ by
\begin{align*}
\mathcal{A}^r(V)U^r & := - \sum_{j,k=1}^2 \partial_j \left( a_{j,k}^r(V) \partial_k U^r \right) + \sum_{j=1}^2 a_j^r(V) \partial_j U^r\ , \qquad r=1,2\ , \\
\mathcal{B}^r(V)U^r & := \sum_{j,k=1}^2 a_{j,k}^r(V) \mathbf{n}_j \partial_k U^r \ , \qquad r=1,2\ ,
\end{align*}
as well as the source terms
\begin{equation*}
S^1(V) := \begin{pmatrix}
0 \\
\displaystyle{\frac{v_3-v_2}{\nu}}
\end{pmatrix} \ , \qquad S^2(V) := \left( \frac{v_1-v_3}{\nu\varepsilon}\right)\ .
\end{equation*}
With this notation, the system \eqref{a1}-\eqref{a2} reads 
\begin{subequations}\label{b3}
\begin{align}
\partial_t U^1 + \mathcal{A}^1(U)U^1 + \mathcal{A}^2(U) U^2 & = S^1(U) \;\;\text{ in }\;\; (0,\infty)\times\Omega\ , \label{b3a}\\
\partial_t U^2 & = S^2(U) \;\;\text{ in }\;\; (0,\infty)\times\Omega\ , \label{b3b}\\
\mathcal{B}^1(U)U^1 + \mathcal{B}^2(U)U^2 & = 0 \;\;\text{ on }\;\; (0,\infty)\times\Omega\ , \label{b3c} \\
U(0) & = U^{in} \;\;\text{ in }\;\; \Omega\ , \label{b3d}
\end{align}
\end{subequations}
where $U^{in} := (u^{in}, w^{in}, v^{in}) \in W^{1}_{3}(\Omega;\mathbb{R}^3)$. Since $A(U)$ has two positive eigenvalues $(1,D/\nu)$ for all $U\in\mathbb{R}^3$, it follows from the structure of $(a_{j,k}^1)_{1\le j,k\le 2}$ and \cite[Remarks~4.1~(a)-(iii)]{Am1991} that $(\mathcal{A}^1,\mathcal{B}^1)$ is normally elliptic and, since $\mathcal{B}^2=0$, the condition~(6.1) in \cite{Am1991} is satisfied. We then infer from \cite[Theorem~6.4]{Am1991} that there is a unique weak solution $U=(U^1,U^2)$ to \eqref{b3} defined on a maximal time interval $[0,T_m)$, $T_m\in (0,\infty]$, which satisfies
\begin{align*}
U^1 & \in C\left( [0,T_m); W^{1}_{3}(\Omega;\mathbb{R}^2) \right) \cap C^1\left( [0,T_m); W^{1}_{3/2}(\Omega;\mathbb{R}^2)' \right)\ , \\ 
U^2 & \in C^1\left( [0,T_m); W^{1}_{3}(\Omega) \right)\ .
\end{align*}
Setting $(u,w,v):=U$, we have thus constructed a unique weak solution $(u,v,w)$ to \eqref{a1}-\eqref{a2} endowed with the properties listed in Proposition~\ref{propb1}, except for the non-negativity of $(u,v,w)$ and \eqref{b1}. To prove the former, we first notice that \eqref{a1a}-\eqref{a1d} and the comparison principle entail that $u\ge 0$ in $[0,T_m)\times \Omega$. Owing to \eqref{a1b}, this in turn implies that $v\ge 0$ in $[0,T_m)\times \Omega$. This last property, along with \eqref{a1c}-\eqref{a1d} and a further use of the comparison principle, ensures that $w\ge 0$ in $[0,T_m)\times \Omega$. We finally integrate \eqref{a1a} over $\Omega$ and deduce \eqref{b1} from the no-flux boundary conditions \eqref{a1d} and the non-negativity of $u$, thereby completing the proof.
\end{proof}

We next derive additional regularity properties of $u$ and $w$ with the help of parabolic regularity results.

%%%%%%%%%%%%%%%%
\begin{corollary}\label{corb2}
Consider non-negative initial conditions $(u^{in},v^{in},w^{in})$ in $W^1_3(\Omega;\mathbb{R}^3)$ and denote the corresponding weak solution in $W^{1}_{3}$ to \eqref{a1}-\eqref{a2} by $(u,v,w)$, defined up to its maximal time $T_m\in (0,T_m]$, see Proposition~\ref{propb1}. Assume further that 
\begin{equation}
u^{in}\in W_{3,\mathcal{B}}^{2\theta}(\Omega) \;\;\text{ and }\;\; w^{in} \in W^{2}_{3,\mathcal{B}}(\Omega) \label{b4}
\end{equation}
for some $\theta\in (5/6,1)$. Then 
\begin{align}
u & \in C([0,T_m);W_{3,\mathcal{B}}^{2\theta}(\Omega)) \cap C^1([0,T_m);W^{2\theta-2}_{3,\mathcal{B}}(\Omega)) \ , \label{b5a} \\
u & \in C((0,T_m);W_{3,\mathcal{B}}^{2}(\Omega)) \cap C^1((0,T_m);L_{3}(\Omega)) \ , \label{b5b} \\
w & \in C([0,T_m);W_{3,\mathcal{B}}^{2}(\Omega)) \cap C^1([0,T_m);L_3(\Omega)) \ . \label{b5c}
\end{align}
\end{corollary}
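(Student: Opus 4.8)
The plan is to bootstrap regularity of the weak solution from Proposition~\ref{propb1} by treating \eqref{a1a} and \eqref{a1c} as linear parabolic equations with coefficients and right-hand sides of known (low) regularity, and then iterating. Throughout, write $v$ explicitly in terms of $u$ via \eqref{a1b}: since $v$ solves the linear ODE $\nu\varepsilon\,\partial_t v = u - v$ with $v(0)=v^{in}$, we have
\begin{equation*}
v(t) = e^{-t/(\nu\varepsilon)} v^{in} + \frac{1}{\nu\varepsilon}\int_0^t e^{-(t-s)/(\nu\varepsilon)} u(s)\, \mathrm{d}s\ ,
\end{equation*}
so that $v$ inherits, on every compact subinterval of $[0,T_m)$, the spatial regularity of $u$ (and its time-regularity is one order better, as recorded already in Proposition~\ref{propb1}). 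In particular, once $u\in C([0,T_m);W^{2\theta}_{3,\mathcal{B}}(\Omega))$ is known, the same holds for $v$, and then \eqref{a1c} becomes a (non-degenerate) linear parabolic equation for $w$ with source $v \in C([0,T_m);W^{2\theta}_{3}(\Omega)) \hookrightarrow C([0,T_m);W^1_3(\Omega))$ and Neumann boundary condition; parabolic $L_p$-maximal-regularity / analytic-semigroup theory (e.g.\ the framework of \cite{Am1993}) then upgrades $w$ to the class \eqref{b5c}. I would carry this out first.

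Next, with $w$ enjoying \eqref{b5c}, rewrite \eqref{a1a} in non-divergence form as the linear equation $\partial_t u = \Delta u - \nabla w\cdot\nabla u - u\,\Delta w$ with homogeneous Neumann data $\nabla u\cdot\mathbf n = 0$ (which is consistent with \eqref{a1d} since $\nabla w\cdot\mathbf n=0$). The coefficient $\nabla w$ lies in $C([0,T_m);W^1_3(\Omega))\hookrightarrow C([0,T_m);C^{\alpha}(\bar\Omega))$ for some $\alpha>0$ because $W^1_3(\Omega)\hookrightarrow C^{1-2/3}(\bar\Omega)$ in two dimensions, and the zeroth-order coefficient $\Delta w$ lies in $C([0,T_m);L_3(\Omega))$. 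Feeding this into the $W^{2\theta}_3$-realization of the Neumann Laplacian as a sectorial operator and using the initial datum $u^{in}\in W^{2\theta}_{3,\mathcal{B}}(\Omega)$ from \eqref{b4}, the standard theory of abstract linear parabolic problems with time-dependent coefficients (again \cite{Am1993}, or Amann's earlier monograph) yields \eqref{b5a}. The instantaneous-smoothing statement \eqref{b5b} then follows from the parabolic smoothing of the analytic semigroup generated by the Neumann Laplacian: the solution operator maps $W^{2\theta}_{3,\mathcal{B}}(\Omega)$ into $W^2_{3,\mathcal{B}}(\Omega)$ for $t>0$, with a bound that is locally integrable near $t=0$ because $2-2\theta<1$ thanks to $\theta>5/6>1/2$, so a fixed-point / variation-of-constants argument on a short interval gives $u(t)\in W^2_{3,\mathcal{B}}(\Omega)$ for $t>0$, and then $\partial_t u = \Delta u - \nabla w\cdot\nabla u - u\,\Delta w \in L_3(\Omega)$ continuously in $t>0$, which is \eqref{b5b}.

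The main obstacle is the low regularity of the drift and zeroth-order terms in \eqref{a1a}: $\Delta w$ is only in $L_3$, not continuous, so one cannot invoke Schauder theory directly and must work in an $L_p$-based scale where the product $u\,\Delta w$ is controlled. Here the two-dimensionality and the choice $\theta>5/6$ are essential: $W^{2\theta}_3(\Omega)\hookrightarrow C^{2\theta-2/3}(\bar\Omega)$ with $2\theta-2/3>1$, so $u$ is bounded (indeed Lipschitz) and $u\,\Delta w\in L_3$, keeping the right-hand side in the correct space for the $W^{2\theta}_3$-theory; simultaneously $\nabla w\cdot\nabla u$ is a product of a $C^\alpha$ function with an $L_3$ (in fact $C^\beta$) function. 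A secondary technical point is matching the interpolation-space exponents so that the maximal-regularity norm closes on a short time interval and then extends to $[0,T_m)$ by the uniqueness in Proposition~\ref{propb1} together with a continuation argument; this is routine once the function spaces are lined up, but requires some care with the trace/compatibility condition $\nabla u^{in}\cdot\mathbf n=0$ encoded in $u^{in}\in W^{2\theta}_{3,\mathcal{B}}(\Omega)$, which is exactly why the subscript $\mathcal B$ appears in \eqref{b4}.
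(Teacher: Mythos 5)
Your overall strategy -- first upgrade $w$ via the analytic semigroup generated by $-D\Delta+\delta\,\mathrm{id}$ using the regularity of $v$ from Proposition~\ref{propb1}, then bootstrap $u$ through linear parabolic theory in the scale $W^{2\theta-2}_{3,\mathcal{B}}\subset W^{2\theta}_{3,\mathcal{B}}$ -- is the same as the paper's, and your first step for \eqref{b5c} is fine since $v\in C^1([0,T_m);W^1_3(\Omega))$ is (more than) H\"older in time with values in $L_3(\Omega)$. The gap is in the step for $u$. The classes asserted in \eqref{b5a} and \eqref{b5b} are \emph{strict-solution} classes ($u$ continuous with values in the domain of the generator and $C^1$ with values in the base space), and the abstract results you invoke do not deliver them under the time-regularity you actually have. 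Writing \eqref{a1a} as $\partial_t u=\Delta u-\nabla w\cdot\nabla u-u\,\Delta w$ and appealing to ``the standard theory of abstract linear parabolic problems with time-dependent coefficients'' requires H\"older continuity in time of $t\mapsto A(t)$ (Sobolevskii--Tanabe--Amann type hypotheses), whereas your argument only provides $\nabla w\in C([0,T_m);C^\alpha(\bar\Omega))$ and $\Delta w\in C([0,T_m);L_3(\Omega))$, i.e.\ mere continuity in time; likewise, with a source that is only continuous in time, variation of constants yields a mild solution but not membership in $C([0,T_m);W^{2\theta}_{3,\mathcal{B}}(\Omega))\cap C^1([0,T_m);W^{2\theta-2}_{3,\mathcal{B}}(\Omega))$, and $L_p$-maximal regularity gives $W^1_p$-in-time regularity and continuity into the trace (interpolation) space, not the $C$/$C^1$ classes stated in \eqref{b5a}--\eqref{b5b}. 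The same objection applies to your sketch of \eqref{b5b}: semigroup smoothing gives $u(t)\in W^2_{3,\mathcal{B}}(\Omega)$ pointwise in $t>0$ only after one knows the right-hand side is H\"older in time with values in $L_3(\Omega)$ locally on $(0,T_m)$, which you never establish.

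This is precisely the point the paper's proof is built around: it keeps $\mathrm{div}(u\nabla w)$ as an inhomogeneity and manufactures the missing time-H\"older regularity by interpolating between the $C^0$- and $C^1$-in-time classes available from Proposition~\ref{propb1} and from the first step for $w$ (via $\left( W_{3,\mathcal{B}}^{-1}(\Omega) , W_{3,\mathcal{B}}^{1}(\Omega) \right)_\theta \stackrel{\cdot}{=} W_{3,\mathcal{B}}^{2\theta-1}(\Omega)$ and the algebra property of $W^{2\theta-1}_3(\Omega)$), obtaining $\mathrm{div}(u\nabla w)\in C^{1-\theta}([0,T_m);W^{2\theta-2}_{3,\mathcal{B}}(\Omega))$ before applying \cite[Theorem~10.1]{Am1993}; a second interpolation (with $\alpha\in(5/3-\theta,1)$, together with the H\"older-in-time regularity of $w$ on $(0,T_m)$) gives $\mathrm{div}(u\nabla w)\in C^{1-\alpha}((0,T_m);L_3(\Omega))$ and hence \eqref{b5b}. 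Your outline could be repaired either by carrying out this interpolation yourself, or by noting that the source takes values in a space strictly better than the base space ($L_3(\Omega)\hookrightarrow W^{2\theta-2}_{3,\mathcal{B}}(\Omega)$ with a positive gap, since $\theta<1$) and invoking a strict-solution criterion based on spatial rather than temporal regularity of the inhomogeneity; but as written, the decisive verification is missing, and the ``routine'' closing of the argument you defer is in fact the heart of the proof.
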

%%%%%%%%%%%%%%%%

\begin{proof}
Let $\beta\in (0,1)$. Since $v\in C^1([0,T_m);W^{1}_{3}(\Omega))$ and $-D \Delta + \delta\, \mathrm{id}$ generates an analytic semigroup in $L_3(\Omega)$, we infer from \eqref{a1c}, \eqref{b4}, and \cite[Theorem~10.1]{Am1993} (with $\rho=\beta$, $E_0=L_3(\Omega)$, and $E_1=W^{2}_{3,\mathcal{B}}(\Omega)$) that 
\begin{align}
w \in C([0,T_m);W_{3,\mathcal{B}}^{2}(\Omega)) \cap C^1([0,T_m);L_3(\Omega)) \ , \label{b6a} \\
w \in C^\beta((0,T_m);W^{2}_{3,\mathcal{B}}(\Omega)) \cap C^{1+\beta}((0,T_m);L_3(\Omega)) \ . \label{b6b}
\end{align}
Next, since
\begin{equation*}
u\in C([0,T_m);W_{3,\mathcal{B}}^1(\Omega)) \cap C^1([0,T_m);W_{3,\mathcal{B}}^{-1}(\Omega))
\end{equation*}
by \eqref{WmrB} and Proposition~\ref{propb1} and 
\begin{equation*}
\left( W_{3,\mathcal{B}}^{-1}(\Omega) , W_{3,\mathcal{B}}^{1}(\Omega) \right)_\theta \stackrel{\cdot}{=} W_{3,\mathcal{B}}^{2\theta-1}(\Omega)
\end{equation*}
(up to equivalent norms) by \cite[Theorem~7.2]{Am1993}, an interpolation argument guarantees that
\begin{equation*}
u \in C^{1-\theta}([0,T_m);W_{3,\mathcal{B}}^{2\theta-1}(\Omega))\ .
\end{equation*}
Owing to \eqref{b6a}, a similar argument implies that $w\in C^{1-\theta}([0,T_m);W_{3,\mathcal{B}}^{2\theta}(\Omega))$. Consequently, recalling that $W_{3,\mathcal{B}}^{2\theta-1}(\Omega)$ is an algebra as $2\theta-1>2/3$, we conclude that $u \nabla w\in C^{1-\theta}([0,T_m);W_{3,\mathcal{B}}^{2\theta-1}(\Omega;\mathbb{R}^2))$. Thus,
\begin{equation}
\mathrm{div}(u\nabla w) \in C^{1-\theta}([0,T_m);W_{3,\mathcal{B}}^{2\theta-2}(\Omega))\ . \label{b7}
\end{equation}
By \cite[Theorem~8.5]{Am1993}, the realization in $W_{3,\mathcal{B}}^{2\theta-2}(\Omega)$ of the Laplace operator with homogeneous Neumann boundary conditions generates an analytic semigroup in $W_{3,\mathcal{B}}^{2\theta-2}(\Omega)$ (with domain $W_{3,\mathcal{B}}^{2\theta}(\Omega)$) and we infer from \eqref{a1a}, \eqref{b4}, \eqref{b7}, and \cite[Theorem~10.1]{Am1993} (with $\rho=1-\theta$, $E_0=W_{3,\mathcal{B}}^{2\theta-2}(\Omega)$, and $E_1=W_{3,\mathcal{B}}^{2\theta}(\Omega)$) that
\begin{equation}
u \in C([0,T_m);W_{3,\mathcal{B}}^{2\theta}(\Omega)) \cap C^1([0,T_m);W_{3,\mathcal{B}}^{2\theta-2}(\Omega))\ . \label{b8}
\end{equation}
Consider next $\alpha\in (5/3-\theta,1)$. Using once more \cite[Theorem~7.2]{Am1993} which guarantees that
\begin{equation*}
\left( W_{3,\mathcal{B}}^{2\theta-2}(\Omega) , W_{3,\mathcal{B}}^{2\theta}(\Omega) \right)_\alpha \stackrel{\cdot}{=} W_{3,\mathcal{B}}^{2(\alpha+\theta)-2}(\Omega)
\end{equation*}
(up to equivalent norms) and an interpolation argument, we deduce from \eqref{b8} that 
\begin{equation*}
u \in C^{1-\alpha}([0,T_m); W_{3,\mathcal{B}}^{2(\alpha+\theta)-2}(\Omega))\ ;
\end{equation*}
Hence, since $2(\alpha+\theta)-2>4/3>1$, 
\begin{equation}
u \in C^{1-\alpha}([0,T_m); W_{3,\mathcal{B}}^{1}(\Omega))\ . \label{b9}
\end{equation}
Combining \eqref{b6b} (with $\beta=1-\alpha$) and \eqref{b9} with the fact that $W_{3,\mathcal{B}}^1(\Omega)$ is an algebra entail that $u\nabla w$ belongs to $C^{1-\alpha}((0,T_m);W_{3,\mathcal{B}}^1(\Omega;\mathbb{R}^2))$ and thus
\begin{equation}
\mathrm{div}(u\nabla w) \in C^{1-\alpha}((0,T_m);L_3(\Omega))\ . \label{b10}
\end{equation}
Owing to \eqref{a1a} and \eqref{b10}, we are again in a position to apply \cite[Theorem~10.1]{Am1993} (with $\rho=1-\alpha$, $E_0=L_3(\Omega)$, $E_1=W_{3,\mathcal{B}}^2(\Omega)$) on any subinterval $[t_0,T_m)$ of $(0,T_m)$ to conclude that 
\begin{equation*}
u \in C^{1-\alpha}((t_0,T_m);W_{3,\mathcal{B}}^2(\Omega))\cap C^{2-\alpha}((t_0,T_m);L_3(\Omega))\ .
\end{equation*}
Since $t_0\in (0,T_m)$ is arbitrary, the proof of Corollary~\ref{corb2} is complete.
\end{proof}
	
%%%%%%%%%%%%%%%%
%%%%%%%%%%%%%%%%
\subsection{Estimates in $W^{1}_{3}(\Omega)$}\label{s2.2}
%%%%%%%%%%%%%%%%
%%%%%%%%%%%%%%%%

\newcounter{NumConstC}

Let $\theta\in (5/6,1)$ and consider initial conditions $(u^{in},v^{in},w^{in})\in \mathcal{I}_{M,\theta}$. We denote the corresponding weak solution in $W^{1,3}$ to \eqref{a1}-\eqref{a2} by $(u,v,w)$, defined up to its maximal time $T_m\in (0,\infty]$, see Proposition~\ref{propb1}.

The aim of this section is to show that \eqref{b2} holds true for any $T>0$. We first recall that, according to \eqref{b1},
\begin{equation}
\|u(t)\|_1 = M = \|u^{in}\|_1\ , \qquad t\in [0,T_m)\ . \label{c1}
\end{equation}
Throughout this section, $C$ and $(C_i)_{i\ge 1}$ are positive constants depending only on $\Omega$, $\nu$, $\varepsilon$, $D$, $\delta$, $\theta$, $u^{in}$, $v^{in}$, and $w^{in}$. Dependence upon additional parameters will be indicated explicitly.

%%%%%%%%%%%%%%%%
\begin{lemma}\label{lemc1}
\refstepcounter{NumConstC}\label{cst1} Let $T>0$. There is $C_{\ref{cst1}}(T)>0$  such that
\begin{align*}
\|L(u(t))\|_1 + \|v(t)\|_2 + \|w(t)\|_{W^{1}_{2}} + \|\partial_t w(t)\|_2 & \le C_{\ref{cst1}}(T)\ , \qquad t\in [0,T]\cap [0,T_m)\ , \\ 
\int_0^t \left[ \|\nabla\sqrt{u}(s)\|_2^2 + \|\nabla\partial_t w(s)\|_2^2 \right]\ \mathrm{d}s & \le C_{\ref{cst1}}(T)\ , \qquad t\in [0,T]\cap [0,T_m)\ ,
\end{align*}
the function $L$ being defined in \eqref{a4}.
\end{lemma}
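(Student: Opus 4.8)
The natural strategy is to differentiate the Liapunov functional $\mathcal{E}$ (defined in \eqref{a3b}) along the flow of \eqref{a1}-\eqref{a2} and combine the resulting dissipation identity with the coercivity of $\mathcal{E}_0$ coming from the logarithmic Hardy--Littlewood--Sobolev / Moser--Trudinger estimate. Concretely, the first step is to compute $\frac{d}{dt}\mathcal{E}(u,v,w)$ rigorously, which is legitimate thanks to the regularity provided by Corollary~\ref{corb2} (note $u\in C^1((0,T_m);L_3(\Omega))$, $w\in C^1([0,T_m);L_3(\Omega))$ with $w\in C^\beta((0,T_m);W^2_{3,\mathcal B})$, and $v\in C^1([0,T_m);W^1_3)$). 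Writing $z := -D\Delta w + \delta w - v$ so that $\nu\partial_t w = -z$ by \eqref{a1c}, one expects that the terms produced by the chemotactic equation \eqref{a1a} contribute $-\int_\Omega u|\nabla(\ln u - w)|^2\,dx \le 0$, the terms from $\partial_t w$ contribute a nonpositive quadratic term in $\partial_t w$, and the $\varepsilon$-term in \eqref{a3b} is precisely designed to absorb the contribution of \eqref{a1b}; here the positivity of $\varepsilon$ is essential. The outcome should be an identity of the form
\begin{equation*}
\frac{d}{dt}\mathcal{E}(u,v,w) + \mathcal{D}(u,v,w) = 0\ ,
\end{equation*}
with $\mathcal{D} \ge c\big( \|\nabla\sqrt u\|_2^2 + \|\partial_t w\|_2^2 + \|\nabla \partial_t w\|_2^2 \big)$ after using \eqref{a1b}--\eqref{a1c} to rewrite $\partial_t v$ and $z$ in terms of $\partial_t w$ and its gradient; the $\|\nabla\partial_t w\|_2^2$ piece should emerge from the dissipation associated with $z$ (since $\partial_t z$ involves $\Delta\partial_t w$) or, more simply, from differentiating \eqref{a1c} in time and testing by $\partial_t w$. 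This is the step I expect to require the most care: getting all the signs right, making sure the boundary terms vanish (they do, by \eqref{a1d}), and identifying exactly which combination of $\|\partial_t w\|_2^2$ and $\|\nabla\partial_t w\|_2^2$ the dissipation controls.

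The second step is to bound $\mathcal{E}(u(t),v(t),w(t))$ from below on bounded time intervals so as to extract the stated pointwise bounds. Integrating the dissipation identity gives $\mathcal{E}(u(t),v(t),w(t)) \le \mathcal{E}(u^{in},v^{in},w^{in})$ for all $t$, so it suffices to control $\mathcal{E}$ from below by the quantities we want to estimate, \emph{up to a constant that may depend on $T$}. For this one invokes the Moser--Trudinger inequality on $\Omega\subset\RR^2$ in the form recalled in the introduction after Theorem~\ref{thma2}: for $(u,w)\in\mathcal{A}_M$,
\begin{equation*}
\int_\Omega uw\,dx \le \frac{M}{8\pi}\|\nabla w\|_2^2 + C\big(1 + \|w\|_1\big) + C\|L(u)\|_1^{1/2}\cdots
\end{equation*}
— here, since $M$ need not be below $4\pi D$, one cannot absorb $\frac{M}{8\pi}\|\nabla w\|_2^2$ into the $\frac D2\|\nabla w\|_2^2$ term of $\mathcal{E}_0$. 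This is where the $T$-dependence enters: one controls $\|w\|_1$ (and hence the lower-order Moser--Trudinger constant) by a Gronwall-type argument using \eqref{a1b}--\eqref{a1c}, the conservation of mass $\|u(t)\|_1 = M$, and the bound $\|v(t)\|_1\le \|v^{in}\|_1 + Mt$ or similar, which grows at most linearly in $t$; then a standard elliptic/ODE estimate gives $\|w(t)\|_1 \le C(T)$ on $[0,T]$. With $\|w(t)\|_1 \le C(T)$ in hand, the lower bound on $\mathcal{E}$ becomes
\begin{equation*}
\mathcal{E}(u,v,w) \ge \|L(u)\|_1 - \frac{M}{8\pi}\|\nabla w\|_2^2 + \frac D2\|\nabla w\|_2^2 + \frac\delta2\|w\|_2^2 + \frac\varepsilon2\|z\|_2^2 - C(T)\big(1 + \|L(u)\|_1^{1/2}\big)\ ,
\end{equation*}
and the term $-\frac{M}{8\pi}\|\nabla w\|_2^2$ must be handled differently: one splits $w = \langle w\rangle + (w - \langle w\rangle)$, uses Poincaré--Wirtinger, and absorbs the excess gradient term at the cost of enlarging $C(T)$, or alternatively argues that $\|L(u)\|_1 - \varrho\|L(u)\|_1^{1/2}$ is bounded below. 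The precise bookkeeping is routine but must be done carefully so that all constants depend only on the allowed data and $T$.

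The third and final step is to read off each asserted bound. Once $\mathcal{E}(u(t),v(t),w(t)) \le \mathcal{E}(u^{in},v^{in},w^{in})$ is combined with the coercivity inequality above, one immediately gets $\|L(u(t))\|_1 \le C(T)$, $\|\nabla w(t)\|_2 \le C(T)$, $\|w(t)\|_2 \le C(T)$, and $\|z(t)\|_2 = \|{-D\Delta w + \delta w - v}\|_2 \le C(T)$; since $\nu\partial_t w = -z$, this last estimate gives $\|\partial_t w(t)\|_2 \le C(T)$, and together with $\|w(t)\|_{W^1_2}\le C(T)$ and elliptic regularity for \eqref{a1c} one also recovers $\|v(t)\|_2 \le C(T)$ (writing $v = -D\Delta w + \delta w - z$ and noting $-D\Delta w = \nu\partial_t w^{\text{wait}}$ — more simply, $v = \delta w + D(-\Delta w)$ is controlled once $\|\Delta w\|_2$ is, which follows from $\|z\|_2$ and $\|v\|_2$... so one should instead bound $\|v\|_2$ directly from \eqref{a1b} by testing and Gronwall, giving $\|v(t)\|_2 \le C(T)$, and then $\|\Delta w\|_2 \le C(T)$). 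Finally, integrating the dissipation identity over $[0,t]$ and using the already-established bound on $\mathcal{E}(u(t),v(t),w(t))$ from below yields
\begin{equation*}
\int_0^t \big[ \|\nabla\sqrt u(s)\|_2^2 + \|\nabla\partial_t w(s)\|_2^2 \big]\,ds \le C(T)\ ,
\end{equation*}
completing the proof. The main obstacle, as noted, is the first step — establishing the clean dissipation identity with the right quadratic lower bound — and, secondarily, tracking the linear-in-$t$ growth of $\|w(t)\|_1$ that forces the $T$-dependence of the constant.
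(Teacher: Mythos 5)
There is a genuine gap: your strategy hinges on bounding $\mathcal{E}$ from below via Moser--Trudinger, but Lemma~\ref{lemc1} must hold for \emph{every} mass $M>0$ — it is the key step towards global existence (Theorem~\ref{thma1}), which covers in particular the masses $M>4\pi D$ for which Theorem~\ref{thma3} produces unbounded solutions. For such $M$ the functional $\mathcal{E}_0$ (hence $\mathcal{E}$) is \emph{not} bounded below on $\mathcal{A}_M$, see \eqref{a10}, so the monotonicity $\mathcal{E}(t)\le\mathcal{E}(0)$ carries no pointwise information. Your proposed repairs do not address this: after Moser--Trudinger the deficit is $-\tfrac{M-4\pi D}{8\pi}\|\nabla w(t)\|_2^2$, a negative multiple of precisely the quantity you are trying to bound, and no Poincar\'e--Wirtinger splitting or "enlarging $C(T)$" can absorb it, since there is no a priori control of $\|\nabla w(t)\|_2$ to put into $C(T)$; likewise the remark about $\|L(u)\|_1-\varrho\|L(u)\|_1^{1/2}$ is beside the point. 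A secondary problem is the dissipation itself: $\mathcal{D}$ contains $\int_\Omega u|\nabla(\ln u - w)|^2\,\mathrm{d}x$, which does not control $\|\nabla\sqrt{u}\|_2^2$ (the cross term $\int u|\nabla w|^2$ is uncontrolled), so even the time-integrated bound in the second display would not follow from your identity.

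The paper's proof is different and more elementary: it never invokes $\mathcal{E}$ or Moser--Trudinger at this stage (those appear only in Section~\ref{s3}, under the smallness assumption $M<4\pi D$). Instead one writes the entropy identity \eqref{c2} for $\|L(u)\|_1$, combines it with the identities obtained from \eqref{a1b}--\eqref{a1c} for $D\|\nabla w\|_2^2+\delta\|w\|_2^2$, $\|\partial_t w\|_2^2$, and $\|v\|_2^2$, so that the only remaining coupling term is $\int_\Omega u(v-\delta w)\,\mathrm{d}x \le \int_\Omega uv\,\mathrm{d}x$ (the $-\delta\int uw$ part is discarded by non-negativity). Then $\int uv \le \|u\|_2\|v\|_2$, and the Gagliardo--Nirenberg inequality \eqref{c6} together with mass conservation gives $\|u\|_2 \le 2c_0^2(\sqrt{M}\,\|\nabla\sqrt{u}\|_2 + M)$, so Young's inequality lets the coupling be absorbed into the entropy dissipation $4D\|\nabla\sqrt{u}\|_2^2$, leaving a Gronwall inequality driven by $C(1+\|v\|_2^2)$. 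This yields all the stated ($T$-dependent, possibly exponentially growing) bounds for arbitrary $M$, with the positivity of $\varepsilon$ entering through the $\|v\|_2^2$ and $\|\partial_t w\|_2^2$ terms. If you want to keep a Liapunov-functional flavour, note that the quantity actually estimated is the modified functional $D\|L(u)\|_1+\tfrac{\nu}{2}(\varepsilon\|v\|_2^2+D\|\nabla w\|_2^2+\delta\|w\|_2^2+\nu^2\varepsilon\|\partial_t w\|_2^2)$, which — unlike $\mathcal{E}$ — has no indefinite term $-\int uw$ and is therefore coercive for every mass.
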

%%%%%%%%%%%%%%%%

\begin{proof}
By \eqref{a1a} and \eqref{a1d},
\begin{align}
\frac{\mathrm{d}}{\mathrm{d}t} \|L(u)\|_1 & = - \int_\Omega \frac{|\nabla u|^2}{u}\ \mathrm{d}x + \int_\Omega \nabla u \cdot \nabla w\ \mathrm{d}x \nonumber \\
& = - 4 \|\nabla\sqrt{u}\|_2^2 - \int_\Omega u \Delta w\ \mathrm{d}x\ . \label{c2}
\end{align}	
Next, by \eqref{a1b},
\begin{equation*}
\nu \int_\Omega u \partial_t w\ \mathrm{d}x = \nu^2 \varepsilon \int_\Omega \partial_t v \partial_t w \ \mathrm{d}x + \nu \int_\Omega v \partial_t w \ \mathrm{d}x\ .
\end{equation*}
On the one hand, we infer from \eqref{a1c} and \eqref{a1d} that
\begin{equation*}
\int_\Omega v \partial_t w \ \mathrm{d}x = \nu \|\partial_t w\|_2^2 + \frac{1}{2} \frac{\mathrm{d}}{\mathrm{d}t} \left( D \|\nabla w\|_2^2 + \delta \|w\|_2^2 \right)\ .
\end{equation*}
On the other hand, differentiating \eqref{a1c} with respect to time gives 
\begin{equation*}
\int_\Omega \partial_t v \partial_t w \ \mathrm{d}x = D \|\nabla\partial_t w\|_2^2 + \delta \|\partial_t w\|_2^2 + \frac{\nu}{2} \frac{\mathrm{d}}{\mathrm{d}t} \|\partial_t w\|_2^2 \ .
\end{equation*}
Gathering the previous three identities leads us to
\begin{align}
\nu \int_\Omega u \partial_t w\ \mathrm{d}x & = \frac{\nu}{2} \frac{\mathrm{d}}{\mathrm{d}t} \left( D \|\nabla w\|_2^2 + \delta \|w\|_2^2 + \nu^2 \varepsilon \|\partial_t w\|_2^2 \right) \nonumber \\
& \quad + \nu^2 (1+\delta\varepsilon) \|\partial_t w\|_2^2 + \nu^2 \varepsilon D \|\nabla\partial_t w\|_2^2\ . \label{c3}
\end{align}
Now, it follows from \eqref{a1c}, \eqref{c2}, and \eqref{c3} that 
\begin{align}
& \frac{\mathrm{d}}{\mathrm{d}t} \left[ D \|L(u)\|_1 +  \frac{\nu}{2} \left( D \|\nabla w\|_2^2 + \delta \|w\|_2^2 + \nu^2 \varepsilon \|\partial_t w\|_2^2 \right) \right] \nonumber \\
& \qquad\quad + 4D \|\nabla\sqrt{u}\|_2^2 + \nu^2 (1+\delta\varepsilon) \|\partial_t w\|_2^2 + \nu^2 \varepsilon D \|\nabla\partial_t w\|_2^2 \nonumber \\
& \qquad = \int_\Omega u \left( \nu \partial_t w - D \Delta w \right)\ \mathrm{d}x \nonumber \\
& \qquad = \int_\Omega u (v-\delta w)\ \mathrm{d}x\ . \label{c4}
\end{align}
Finally, we deduce from \eqref{a1b} that 
\begin{equation}
\frac{\nu\varepsilon}{2} \frac{\mathrm{d}}{\mathrm{d}t} \|v\|_2^2 + \|v\|_2^2 = \int_\Omega u v\ \mathrm{d}x\ . \label{c5}
\end{equation}
We next recall the Gagliardo-Nirenberg inequality
\begin{equation}
\|z\|_4 \le c_0 \left( \|\nabla z\|_2^{1/2} \|z\|_2^{1/2} + \|z\|_2 \right)\ , \qquad z\in W^{1}_{2}(\Omega)\ , \label{c6}
\end{equation}
where $c_0$ is a positive constant depending only on $\Omega$. Combining \eqref{c1} and \eqref{c6} gives
\begin{align*}
\|u\|_2 & = \|\sqrt{u}\|_4^2 \le 2 c_0^2 \left( \|\nabla\sqrt{u}\|_2 \|\sqrt{u}\|_2 + \|\sqrt{u}\|_2^2 \right) \\
& \le 2 c_0^2 \left( \sqrt{M} \|\nabla\sqrt{u}\|_2 + M \right)\ .
\end{align*}
Consequently, by H\"older's and Young's inequalities, 
\begin{align*}
\int_\Omega u v \ \mathrm{d}x & \le \|u\|_2 \|v\|_2 \le 2 c_0^2 \sqrt{M} \|v\|_2  \|\nabla\sqrt{u}\|_2 + 2 c_0^2 M \|v\|_2 \\
& \le 2D  \|\nabla\sqrt{u}\|_2^2 + C (1+\|v\|_2^2)\ .
\end{align*}
We then infer from \eqref{c4}, \eqref{c5}, the non-negativity of $u$ and $w$, and the previous inequality that
\begin{align*}
& \frac{\mathrm{d}}{\mathrm{d}t} \left[ D \|L(u)\|_1 +  \frac{\nu}{2} \left( \varepsilon \|v\|_2^2 + D \|\nabla w\|_2^2 + \delta \|w\|_2^2 + \nu^2 \varepsilon \|\partial_t w\|_2^2 \right) \right] \\
& \qquad\quad + 2D \|\nabla\sqrt{u}\|_2^2 + \|v\|_2^2 + \nu^2 (1+\delta\varepsilon) \|\partial_t w\|_2^2 + \nu^2 \varepsilon D \|\nabla\partial_t w\|_2^2 \\
& \qquad \le C (1+\|v\|_2^2)\ .
\end{align*}
We finally apply Gronwall's lemma and use the positivity of the parameters to complete the proof.
\end{proof}

An immediate consequence of Lemma~\ref{lemc1} and \eqref{a1c} is the following improved estimate on $w$.

%%%%%%%%%%%%%%%%
\begin{corollary}\label{corc2}
\refstepcounter{NumConstC}\label{cst2} Let $T>0$. There is $C_{\ref{cst2}}(T)>0$  such that
\begin{equation*}
\|w(t)\|_{W^{2}_{2}} \le C_{\ref{cst2}}(T)\ , \qquad t\in [0,T]\cap [0,T_m)\ .
\end{equation*}
\end{corollary}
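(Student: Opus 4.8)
The plan is to treat \eqref{a1c} as an elliptic equation satisfied by $w(t)$ for each fixed $t$ and to combine the bounds collected in Lemma~\ref{lemc1} with the standard $W^2_2$-regularity of the Laplace operator with homogeneous Neumann boundary conditions.

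First I would rewrite \eqref{a1c} in the form
\begin{equation*}
D \Delta w(t) = \nu \partial_t w(t) + \delta w(t) - v(t) \;\;\text{ in }\;\; \Omega\ , \qquad t\in [0,T_m)\ ,
\end{equation*}
and recall from \eqref{a1d} that $w(t)$ satisfies $\nabla w(t)\cdot\mathbf{n}=0$ on $\partial\Omega$. By Lemma~\ref{lemc1}, given $T>0$ there holds
\begin{equation*}
\|\partial_t w(t)\|_2 + \|w(t)\|_2 + \|v(t)\|_2 \le C\, C_{\ref{cst1}}(T)\ , \qquad t\in [0,T]\cap [0,T_m)\ ,
\end{equation*}
so that the right-hand side of the above identity is bounded in $L_2(\Omega)$ by a constant depending only on $T$ (and the parameters of \eqref{a1}); hence $\|\Delta w(t)\|_2 \le C(T)$ on $[0,T]\cap[0,T_m)$.

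Finally, since $\Omega$ is a smooth bounded domain, the elliptic regularity estimate for the Neumann Laplacian yields $\|w(t)\|_{W^2_2} \le C\left( \|\Delta w(t)\|_2 + \|w(t)\|_2 \right)$, which, together with the two previous bounds, gives the claimed estimate with a constant $C_{\ref{cst2}}(T)$ of the required form. The only point to keep in mind is that $w(t)$ indeed belongs to $W^2_2(\Omega)$ with this boundary condition, which follows from the regularity already established in Corollary~\ref{corb2} (or directly from $\Delta w(t)\in L_2(\Omega)$, $w(t)\in W^1_2(\Omega)$, and the Neumann boundary condition); beyond this the argument is entirely routine and presents no real obstacle.
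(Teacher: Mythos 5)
Your proposal is correct and follows essentially the same route as the paper: bound $\|\Delta w(t)\|_2$ via \eqref{a1c} using the $L_2$-estimates on $\partial_t w$, $w$, and $v$ from Lemma~\ref{lemc1}, and then conclude with elliptic regularity for the Neumann Laplacian. The paper merely leaves the final elliptic-regularity step implicit, which you have spelled out correctly.
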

%%%%%%%%%%%%%%%%

\begin{proof}
By \eqref{a1c}, for $t\in [0,T]\cap [0,T_m)$,
\begin{equation*}
D \|\Delta w(t)\|_2 \le \nu \|\partial_t w(t)\|_2 + \delta \|w(t)\|_2 + \|v(t)\|_2 \le (\nu+\delta+1) C_{\ref{cst1}}(T)\ ,
\end{equation*}
the last estimate being a direct consequence of Lemma~\ref{lemc1}.
\end{proof}

Thanks to Corollary~\ref{corc2}, we are in a position to obtain additional estimates on $u$.

%%%%%%%%%%%%%%%%
\begin{lemma}\label{lemc3}
\refstepcounter{NumConstC}\label{cst3} Let $T>0$ and $r>0$. There are $C_{\ref{cst3}}(T,r)>0$  such that
\begin{equation*}
\|u(t)\|_{r+1} \le C_{\ref{cst3}}(T,r)\ , \qquad t\in [0,T]\cap [0,T_m)\ .
\end{equation*}
\end{lemma}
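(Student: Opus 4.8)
The natural approach is a standard $L^p$-testing (Alikakos–Moser) bootstrap: test \eqref{a1a} against $(r+1)u^r$ (or, more conveniently, work with the function $u^{(r+1)/2}$), integrate by parts using the no-flux condition \eqref{a1d}, and estimate the chemotaxis contribution using the bound on $\Delta w$ coming from Corollary~\ref{corc2}. First I would compute
\[
\frac{\mathrm{d}}{\mathrm{d}t} \|u\|_{r+1}^{r+1} = -\frac{4r}{r+1}\,\|\nabla u^{(r+1)/2}\|_2^2 + r\int_\Omega u^r\,\nabla u\cdot\nabla w\,\mathrm{d}x,
\]
and rewrite the last term, after integration by parts, as $-\frac{r}{r+1}\int_\Omega u^{r+1}\,\Delta w\,\mathrm{d}x$. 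Since $\|\Delta w(t)\|_2 \le C_{\ref{cst2}}(T)$ on $[0,T]\cap[0,T_m)$ by Corollary~\ref{corc2}, Hölder's inequality bounds this by $C_{\ref{cst2}}(T)\,\frac{r}{r+1}\,\|u^{(r+1)/2}\|_4^2$. Then the Gagliardo–Nirenberg inequality \eqref{c6} applied to $z = u^{(r+1)/2}$ gives
\[
\|u^{(r+1)/2}\|_4^2 \le C\big(\|\nabla u^{(r+1)/2}\|_2\,\|u^{(r+1)/2}\|_2 + \|u^{(r+1)/2}\|_2^2\big),
\]
so the dangerous term is controlled by $\eta\|\nabla u^{(r+1)/2}\|_2^2 + C(\eta)\|u^{(r+1)/2}\|_2^2$ for any $\eta>0$; choosing $\eta$ small enough absorbs the gradient into the dissipative term.

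The point where one has to be a little careful — and which I expect to be the only real obstacle — is closing the resulting differential inequality, since after the absorption one is left with $\frac{\mathrm{d}}{\mathrm{d}t}\|u\|_{r+1}^{r+1} + c\,\|\nabla u^{(r+1)/2}\|_2^2 \le C\,\|u^{(r+1)/2}\|_2^2 = C\,\|u\|_{r+1}^{r+1}$, which a priori only yields exponential-in-$T$ growth, i.e. a bound of the form $C(T,r)$ — but that is exactly what the lemma claims, so Gronwall's lemma suffices and no further refinement is needed. To initiate the bootstrap one needs $\|u^{(r+1)/2}\|_2^2 = \|u\|_{r+1}^{r+1}$ to be finite and controlled at the relevant level: for $r\le 1$ this is immediate from Lemma~\ref{lemc1}, which gives $\|u(t)\|_2 \le C(T)$ (via $u = (\sqrt u)^2$, the bound on $\|\nabla\sqrt u\|_2$ in $L^2_t$, together with \eqref{c1} and \eqref{c6}), and then one iterates: having $\|u\|_{r+1}\le C(T,r)$ for some $r$, the Gagliardo–Nirenberg step actually also furnishes, after time-integration, a bound on $\int_0^t\|u^{(r+1)/2}\|_4^2\,\mathrm{d}s$, hence on $\int_0^t\|u\|_{2(r+1)}^{r+1}\,\mathrm{d}s$, which feeds the next stage. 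Alternatively, and more cleanly, one observes that \eqref{c6} together with the already-established bound on $\|u\|_{r+1}$ controls $\|u^{(r+1)/2}\|_2^2$ in terms of lower norms, so a single application of Gronwall at each level, starting from $r$ arbitrarily small, reaches any prescribed $r$ in finitely many steps.

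I would therefore organise the proof as: (i) fix $T$ and $r$, and reduce to proving the estimate for a sequence of exponents $r_k = 2^k r_0 \to \infty$ (or just do it directly for the given $r$ using the $L^2$ bound as the base case, since the GN inequality \eqref{c6} already bridges $\|u\|_{r+1}$ to $\|u^{(r+1)/2}\|_2$); (ii) derive the differential inequality above via the $u^r$-test and integration by parts; (iii) estimate the chemotaxis term by $\|\Delta w\|_2\,\|u^{(r+1)/2}\|_4^2$ using Corollary~\ref{corc2} and Hölder; (iv) apply \eqref{c6} with Young's inequality to absorb $\|\nabla u^{(r+1)/2}\|_2^2$; (v) conclude with Gronwall's lemma, the constant depending on $T$, $r$, $C_{\ref{cst2}}(T)$, and the base-case bound. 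The positivity of $D$ is what makes the absorption possible, and the $T$-dependence is unavoidable and acceptable here.
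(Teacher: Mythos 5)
Your proposal follows exactly the paper's argument: test with $u^r$, integrate by parts to produce $-\tfrac{r}{r+1}\int_\Omega u^{r+1}\Delta w\,\mathrm{d}x$, bound it via H\"older and Corollary~\ref{corc2} by $\|\Delta w\|_2\|u^{(r+1)/2}\|_4^2$, apply the Gagliardo--Nirenberg inequality \eqref{c6} with Young's inequality to absorb the gradient term, and conclude by Gronwall; the bootstrap/base-case discussion is unnecessary (since $u\in C([0,T_m);W^1_3(\Omega))\hookrightarrow C([0,T_m);L_\infty(\Omega))$ in two dimensions, the quantity $\|u(t)\|_{r+1}^{r+1}$ is finite and continuous for every $r$), but it does no harm. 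The proof is correct and essentially identical to the paper's.
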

%%%%%%%%%%%%%%%%

\begin{proof}
Let $T>0$, $r>0$, and $t\in [0,T]\cap [0,T_m)$. By \eqref{a1a} and \eqref{a1d}, 
\begin{align*}
\frac{1}{r+1} \frac{\mathrm{d}}{\mathrm{d}t} \|u\|_{r+1}^{r+1} & = - \int_\Omega u^{r-1} |\nabla u|^2\ \mathrm{d}x + \frac{r}{r+1} \int_\Omega \nabla\left( u^{r+1} \right)\cdot \nabla w\ \mathrm{d}x \\
& = - \frac{4r}{(r+1)^2} \left\| \nabla\left( u^{(r+1)/2} \right) \right\|_2^2 - \frac{r}{r+1} \int_\Omega u^{r+1} \Delta w\ \mathrm{d}x\ .
\end{align*}	
Owing to the Gagliardo-Nirenberg inequality \eqref{c6}, H\"older's inequality, and Corollary~\ref{corc2}, we further obtain
\begin{align*}
\frac{\mathrm{d}}{\mathrm{d}t} \|u\|_{r+1}^{r+1} + \frac{4r}{r+1} \left\| \nabla\left( u^{(r+1)/2} \right) \right\|_2^2 & \le r \|\Delta w\|_2 \left\| u^{(r+1)/2} \right\|_4^2 \\
& \le 2 r c_0^2 C_{\ref{cst2}}(T) \left[ \left\| \nabla\left( u^{(r+1)/2} \right) \right\|_2 \left\| u^{(r+1)/2} \right\|_2 + \left\| u^{(r+1)/2} \right\|_2^2 \right] \\
& \le rC(T) \left[ \left\| \nabla\left( u^{(r+1)/2} \right) \right\|_2 \|u\|_{r+1}^{(r+1)/2} + \|u\|_{r+1}^{r+1} \right]\ .
\end{align*}
We now use Young's inequality to obtain
\begin{equation*}
\frac{\mathrm{d}}{\mathrm{d}t} \|u\|_{r+1}^{r+1} + \frac{2r}{r+1} \left\| \nabla\left( u^{(r+1)/2} \right) \right\|_2^2 \le r(1+r) C(T) \|u\|_{r+1}^{r+1}\ ,
\end{equation*}
from which Lemma~\ref{lemc3} follows after integration with respect to time.
\end{proof}

We are now in a position to apply \cite[Lemma~A.1]{TW2012} and derive an $L^\infty$-estimate on $u$.

%%%%%%%%%%%%%%%%
\begin{lemma}\label{lemc4}
\refstepcounter{NumConstC}\label{cst4} Let $T>0$. There is $C_{\ref{cst4}}(T)>0$  such that
\begin{equation*}
\|u(t)\|_\infty + \|v(t)\|_\infty + \|\partial_t v(t)\|_\infty \le C_{\ref{cst4}}(T)\ , \qquad t\in [0,T]\cap [0,T_m)\ .
\end{equation*}
\end{lemma}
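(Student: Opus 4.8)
The plan is to bootstrap from the $L_{r+1}$-bounds of Lemma~\ref{lemc3} to an $L_\infty$-bound on $u$ via the variation-of-constants formula for \eqref{a1a}, and then transfer this bound to $v$ and $\partial_t v$ using the explicit integration of the ODE \eqref{a1b}. First I would fix $T>0$ and work on $[0,T]\cap[0,T_m)$. Writing $e^{t\Delta_{\mathcal N}}$ for the Neumann heat semigroup on $\Omega$, equation \eqref{a1a} with the no-flux boundary condition \eqref{a1d} gives
\begin{equation*}
u(t) = e^{t\Delta_{\mathcal N}} u^{in} - \int_0^t e^{(t-s)\Delta_{\mathcal N}} \,\mathrm{div}\bigl( u(s)\nabla w(s) \bigr)\ \mathrm{d}s\ .
\end{equation*}
To control the Duhamel term I need an $L_p$-bound on $u\nabla w$: Corollary~\ref{corc2} provides $w\in W_2^2(\Omega)$, hence $\nabla w\in W_2^1(\Omega)\hookrightarrow L_q(\Omega)$ for every $q<\infty$ in two dimensions, and Lemma~\ref{lemc3} gives $u\in L_{r+1}(\Omega)$ for every $r>0$; by H\"older, $u\nabla w$ is bounded in $L_p(\Omega)$ for any $p<\infty$, uniformly on $[0,T]\cap[0,T_m)$. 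One can then either invoke the smoothing estimates $\|e^{t\Delta_{\mathcal N}}\mathrm{div}\,\mathbf{z}\|_\infty \lesssim (1+t^{-1/2-1/p})\|\mathbf{z}\|_p$ directly, choosing $p>2$ so that the time singularity is integrable, or — more in keeping with the references already used in the paper — apply \cite[Lemma~A.1]{TW2012}: the $L_{r+1}$-bounds with $r+1$ large enough, together with the $L_p$-bound on the drift term $u\nabla w$, are precisely the hypotheses needed there to conclude that $\|u(t)\|_\infty\le C_{\ref{cst4}}(T)$ on $[0,T]\cap[0,T_m)$.

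Once $u$ is bounded in $L_\infty$, the bound on $v$ and $\partial_t v$ is elementary. Solving \eqref{a1b} explicitly,
\begin{equation*}
v(t,x) = e^{-t/(\nu\varepsilon)} v^{in}(x) + \frac{1}{\nu\varepsilon}\int_0^t e^{-(t-s)/(\nu\varepsilon)} u(s,x)\ \mathrm{d}s\ ,
\end{equation*}
so $\|v(t)\|_\infty \le \|v^{in}\|_\infty + \sup_{s\in[0,t]}\|u(s)\|_\infty \le \|v^{in}\|_\infty + C_{\ref{cst4}}(T)$, and reading off $\partial_t v = (u-v)/(\nu\varepsilon)$ from \eqref{a1b} gives the bound on $\|\partial_t v(t)\|_\infty$ from those on $\|u(t)\|_\infty$ and $\|v(t)\|_\infty$.

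The only genuinely delicate point is the semigroup estimate step: one must make sure the time singularity $(t-s)^{-1/2-1/p}$ arising from $e^{(t-s)\Delta_{\mathcal N}}\mathrm{div}$ acting from $L_p$ into $L_\infty$ is integrable, which forces the choice $p>2$ — harmless here since the drift term lies in every $L_p$ — and that the constants coming out of Lemma~\ref{lemc3} and Corollary~\ref{corc2}, while depending on $T$, are finite on the whole interval $[0,T]\cap[0,T_m)$; this is exactly what those results assert. I expect \cite[Lemma~A.1]{TW2012} is stated so as to package this bootstrap cleanly, so the argument should reduce to checking its hypotheses rather than redoing the heat-semigroup computation by hand.
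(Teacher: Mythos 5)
Your proposal is correct and follows essentially the same route as the paper: the paper also deduces the $L_\infty$-bound on $u$ from \cite[Lemma~A.1]{TW2012}, with Corollary~\ref{corc2} (hence $\nabla w$ bounded in every $L_q(\Omega)$, $q<\infty$) and Lemma~\ref{lemc3} supplying the hypotheses, and then bounds $v$ and $\partial_t v$ by integrating \eqref{a1b} exactly as you do. Your explicit Duhamel variant with the singularity exponent check is just the unpacked form of the same semigroup argument.
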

%%%%%%%%%%%%%%%%

\begin{proof}
Let $T>0$. Owing to Corollary~\ref{corc2} and Lemma~\ref{lemc3}, we infer from \cite[Lemma~A.1]{TW2012} that
\begin{equation*}
\|u(t)\|_\infty \le C(T)\ , \qquad t\in [0,T]\cap [0,T_m)\ .
\end{equation*}
Next, \eqref{a1b} and the non-negativity of $u$ and $v$ readily imply that, for $t\in [0,T]\cap [0,T_m)$ and $x\in\Omega$, 
\begin{align*}
0 \le v(t,x) & \le \|v^{in}\|_\infty e^{-t/\nu\varepsilon} + \frac{1}{\nu\varepsilon} \int_0^t \|u(s)\|_\infty e^{-(t-s)/\nu\varepsilon}\ \mathrm{d}s \\
& \le \|v^{in}\|_\infty + \sup_{s\in [0,t]}\{\|u(s)\|_\infty\}\ .
\end{align*}
We finally use \eqref{a1b} to complete the proof.
\end{proof}

We now exploit the properties of the heat semigroup and \eqref{a1c} to derive additional estimates on $w$. 

%%%%%%%%%%%%%%%%
\begin{corollary}\label{corc5}
\refstepcounter{NumConstC}\label{cst5} Let $T>0$. There is $C_{\ref{cst5}}(T)>0$  such that
\begin{equation*}
\|w(t)\|_{W^{1}_{\infty}} \le C_{\ref{cst5}}(T)\ , \qquad t\in [0,T]\cap [0,T_m)\ .
\end{equation*}
\end{corollary}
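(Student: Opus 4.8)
The plan is to upgrade the $W^2_2$-bound on $w$ from Corollary~\ref{corc2} together with the newly obtained $L_\infty$-bounds on $u$ and $v$ from Lemma~\ref{lemc4} into a $W^1_\infty$-bound on $w$, using the smoothing properties of the semigroup generated by $-D\Delta+\delta\,\mathrm{id}$ with homogeneous Neumann boundary conditions in $L_3(\Omega)$. Write $w$ via the variation-of-constants (Duhamel) formula associated with \eqref{a1c}: denoting by $(e^{-t\mathcal{L}})_{t\ge 0}$ the analytic semigroup on $L_3(\Omega)$ with generator $\mathcal{L}:=(D\Delta-\delta\,\mathrm{id})/\nu$ and homogeneous Neumann boundary conditions, one has
\begin{equation*}
w(t) = e^{-t\mathcal{L}} w^{in} + \frac{1}{\nu}\int_0^t e^{-(t-s)\mathcal{L}} v(s)\ \mathrm{d}s\ , \qquad t\in [0,T]\cap [0,T_m)\ .
\end{equation*}
Since $w^{in}\in W^2_{3,\mathcal{B}}(\Omega)\hookrightarrow W^1_\infty(\Omega)$ (as $\Omega\subset\mathbb{R}^2$ and $2-2/3>1$), the first term is bounded in $W^1_\infty(\Omega)$ uniformly in $t\ge 0$.

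First I would fix $p\in (2,\infty)$ and recall the standard gradient estimate for the Neumann heat semigroup, namely that there are constants $c>0$ and $\lambda>0$ (depending on $\Omega$, $D$, $\delta$, $\nu$, $p$) such that
\begin{equation*}
\|\nabla e^{-t\mathcal{L}} z\|_p \le c\left( 1 + t^{-1/2} \right) e^{-\lambda t} \|z\|_p\ , \qquad t>0\ , \ z\in L_p(\Omega)\ ;
\end{equation*}
see, e.g., the references already invoked in Corollary~\ref{corb2} on analytic semigroups and fractional powers, or standard parabolic theory. Applying this with $p>2$ to the Duhamel representation and using $\|v(s)\|_p \le |\Omega|^{1/p}\|v(s)\|_\infty \le C_{\ref{cst4}}(T)$ from Lemma~\ref{lemc4} yields, for $t\in [0,T]\cap [0,T_m)$,
\begin{equation*}
\|\nabla w(t)\|_p \le \|\nabla e^{-t\mathcal{L}} w^{in}\|_p + \frac{c}{\nu}\int_0^t \left( 1 + (t-s)^{-1/2}\right) e^{-\lambda(t-s)}\|v(s)\|_p\ \mathrm{d}s \le C(T)\ ,
\end{equation*}
the time integral being finite since $\sigma\mapsto (1+\sigma^{-1/2})e^{-\lambda\sigma}$ is integrable on $(0,\infty)$. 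Thus $w$ is bounded in $W^1_p(\Omega)$ on $[0,T]\cap[0,T_m)$ for every finite $p>2$. Choosing $p$ large enough that $1-2/p>0$, the Sobolev embedding $W^1_p(\Omega)\hookrightarrow L_\infty(\Omega)$ (valid in two dimensions for $p>2$) then gives the bound on $\|w(t)\|_\infty$; combined with the gradient bound in $W^1_p$ this does not quite give $\|\nabla w(t)\|_\infty$ directly, so I would instead go one derivative higher. Namely, from \eqref{a1c} we have $w(t)$ solving the elliptic equation $-D\Delta w(t) + \delta w(t) = v(t) - \nu\partial_t w(t)$, and alternatively I would bootstrap in the Duhamel formula using the $W^{2}_{p}$-smoothing estimate $\|e^{-t\mathcal{L}} z\|_{W^2_p} \le c(1+t^{-1})e^{-\lambda t}\|z\|_p$; this is too singular to integrate, so the cleanest route is to take the $W^{1+\kappa}_p$-smoothing estimate $\|e^{-t\mathcal{L}} z\|_{W^{1+\kappa}_p} \le c(1+t^{-(1+\kappa)/2})e^{-\lambda t}\|z\|_p$ with $\kappa\in (0,1)$ small enough that $(1+\kappa)/2<1$, so that the convolution integral still converges, and conclude $w\in W^{1+\kappa}_p$ bounded on $[0,T]\cap[0,T_m)$. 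Picking $p$ and $\kappa$ so that $1+\kappa - 2/p>1$, i.e. $\kappa>2/p$, the embedding $W^{1+\kappa}_p(\Omega)\hookrightarrow W^1_\infty(\Omega)$ delivers $\|w(t)\|_{W^1_\infty}\le C_{\ref{cst5}}(T)$, as claimed.

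The only mild obstacle is bookkeeping: one must choose the exponents $p$ and $\kappa$ in the right order (first $p>2$ finite, then $\kappa\in(0,1)$ with $2/p<\kappa<1$, which is possible once $p>2$) so that simultaneously the convolution kernel $(1+\sigma^{-(1+\kappa)/2})e^{-\lambda\sigma}$ is integrable on $(0,\infty)$ and the embedding $W^{1+\kappa}_p(\Omega)\hookrightarrow W^1_\infty(\Omega)$ holds in dimension two. Everything else is a routine application of analytic-semigroup estimates exactly as already used in the proof of Corollary~\ref{corb2}, together with the uniform-in-space bound on $v$ from Lemma~\ref{lemc4}. I expect no genuine difficulty here; alternatively one could invoke \cite[Theorem~10.1]{Am1993} once more with a suitable pair $(E_0,E_1)$ to reach the same conclusion, but the direct Duhamel computation is shortest.
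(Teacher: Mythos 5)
Your argument is correct and follows essentially the same route as the paper: write $w$ by Duhamel's formula for \eqref{a1c}, use the analytic-semigroup smoothing estimate from $L_p$ into a fractional Sobolev space with an integrable singularity, bound the source via the $L_\infty$-estimate on $v$ from Lemma~\ref{lemc4}, and conclude by the embedding into $W^1_\infty(\Omega)$. The paper simply makes the concrete choice $p=3$, $\kappa=5/6$, i.e. it works in $W^{11/6}_{3,\mathcal{B}}(\Omega)\stackrel{\cdot}{=}(L_3(\Omega),W^2_{3,\mathcal{B}}(\Omega))_{11/12}$ with kernel $(t-s)^{-11/12}$, which is exactly an admissible instance of your $(p,\kappa)$ bookkeeping.
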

%%%%%%%%%%%%%%%%

\begin{proof}
Let $T>0$ and $t\in [0,T]\cap [0,T_m)$. Since $W^{11/6}_{3}(\Omega)$ embeds continuously in $W^{1}_{\infty}(\Omega)$ by \cite[Theorem~7.1.2]{Pa1983} and 
\begin{equation*}
\left( L_3(\Omega) , W_{3,\mathcal{B}}^2(\Omega) \right)_{11/12} \stackrel{\cdot}{=} W^{11/6}_{3,\mathcal{B}}(\Omega)
\end{equation*}
by \cite[Theorem~7.2]{Am1993}, it follows from \eqref{a1c}, Duhamel's formula, properties of the heat semigroup, see \cite[Theorem~V.2.1.3]{Am1995}, and Lemma~\ref{lemc4} that
\begin{align*}
\|\nabla w(t)\|_\infty & \le C \| w(t)\|_{W_3^{11/6}} \\
& \le C e^{-\delta t/(2\nu)} \|w^{in}\|_{W_3^{11/6}} + C \int_0^t e^{-\delta(t-s)/(2\nu)} (t-s)^{-11/12} \|v(s)\|_3\ \mathrm{d}s \\
& \le C \|w^{in}\|_{W^{2}_{3}} + C C_{\ref{cst4}}(T) \int_0^t (t-s)^{-11/12}\ \mathrm{d}s \\
& \le C(T)\ .
\end{align*}	
We complete the proof by observing that $\|w(t)\|_\infty \le C(T)$ due to Corollary~\ref{corc2} and the continuous embedding of $W^{2}_{2}(\Omega)$ in $L^\infty(\Omega)$.
\end{proof}

Thanks to the just established $W^{1}_{\infty}$-estimate on $w$, we may derive a $W^{1}_{2}$-estimate on $u$.

%%%%%%%%%%%%%%%%
\begin{lemma}\label{lemc6}
\refstepcounter{NumConstC}\label{cst6} Let $T>0$. There is $C_{\ref{cst6}}(T)>0$  such that
\begin{equation*}
\|\nabla u(t)\|_2 + \int_0^t \|\partial_t u(s)\|_2^2\ \mathrm{d}s \le C_{\ref{cst6}}(T)\ , \qquad t\in [0,T]\cap [0,T_m)\ .
\end{equation*}
\end{lemma}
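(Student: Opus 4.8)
The plan is to test \eqref{a1a} with $\partial_t u$ after rewriting the drift term so that the $W^1_\infty$-bound on $w$ from Corollary~\ref{corc5} can be exploited. Concretely, multiply \eqref{a1a} by $\partial_t u$ and integrate over $\Omega$, using the no-flux boundary condition \eqref{a1d} to integrate by parts. The diffusion term yields $-\tfrac12 \frac{\mathrm{d}}{\mathrm{d}t}\|\nabla u\|_2^2$, while the drift term $\mathrm{div}(u\nabla w)$ contributes $\int_\Omega u\nabla w\cdot\nabla(\partial_t u)\,\mathrm{d}x$. Moving the time derivative out of the last integral, write $\int_\Omega u\nabla w\cdot\nabla(\partial_t u)\,\mathrm{d}x = \frac{\mathrm{d}}{\mathrm{d}t}\int_\Omega u\nabla w\cdot\nabla u\,\mathrm{d}x - \int_\Omega \partial_t(u\nabla w)\cdot\nabla u\,\mathrm{d}x$; the second piece splits into $-\int_\Omega (\partial_t u)\nabla w\cdot\nabla u\,\mathrm{d}x$ and $-\int_\Omega u\nabla(\partial_t w)\cdot\nabla u\,\mathrm{d}x$. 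Alternatively, and perhaps more cleanly, one keeps the drift integral as $\int_\Omega u\nabla w\cdot\nabla(\partial_t u)\,\mathrm{d}x$ and estimates it directly by $\|u\|_\infty\|\nabla w\|_\infty\|\nabla(\partial_t u)\|_{(W^1_2)'}$-type arguments; but since we only want an $L_2$-in-time bound on $\partial_t u$ it is simplest to pair it against $\partial_t u$ in $L_2$: bound $\left|\int_\Omega u\nabla w\cdot\nabla(\partial_t u)\,\mathrm{d}x\right|$ after one more integration by parts as $\left|\int_\Omega \mathrm{div}(u\nabla w)\,\partial_t u\,\mathrm{d}x\right| = \left|\int_\Omega (\nabla u\cdot\nabla w + u\Delta w)\partial_t u\,\mathrm{d}x\right|$.

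The key steps, in order, are: (i) establish the identity $\tfrac12\frac{\mathrm{d}}{\mathrm{d}t}\|\nabla u\|_2^2 + \|\partial_t u\|_2^2 = \int_\Omega(\nabla u\cdot\nabla w + u\Delta w)\,\partial_t u\,\mathrm{d}x$; (ii) bound the right-hand side using Corollary~\ref{corc5} (for $\|\nabla w\|_\infty$), Lemma~\ref{lemc4} (for $\|u\|_\infty$), and Corollary~\ref{corc2} together with Lemma~\ref{lemc4}, which control $\|\Delta w\|_2$ via \eqref{a1c}; by Young's inequality the terms $\int_\Omega \nabla u\cdot\nabla w\,\partial_t u\,\mathrm{d}x \le \tfrac14\|\partial_t u\|_2^2 + C(T)\|\nabla u\|_2^2$ and $\int_\Omega u\Delta w\,\partial_t u\,\mathrm{d}x \le \tfrac14\|\partial_t u\|_2^2 + C(T)\|\Delta w\|_2^2 \le \tfrac14\|\partial_t u\|_2^2 + C(T)$ are absorbed; (iii) this yields $\frac{\mathrm{d}}{\mathrm{d}t}\|\nabla u\|_2^2 + \|\partial_t u\|_2^2 \le C(T)(1+\|\nabla u\|_2^2)$, after which Gronwall's lemma and integration in time give the stated bound on $\|\nabla u(t)\|_2$ and on $\int_0^t\|\partial_t u(s)\|_2^2\,\mathrm{d}s$. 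One should note $\|\nabla u^{in}\|_2$ is finite since $u^{in}\in W^{2\theta}_{3,\mathcal{B}}(\Omega)$ with $2\theta>1$, so the initial datum is controlled.

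The main obstacle is justifying the formal computation in step (i): $\partial_t u$ is a priori only in $W^{2\theta-2}_{3,\mathcal{B}}(\Omega)$ from Corollary~\ref{corb2}, so pairing \eqref{a1a} with $\partial_t u$ in $L_2$ is not immediately legitimate on $[0,T_m)$. This is resolved by working first on a subinterval $[t_0,T_m)$, where Corollary~\ref{corb2} gives $u\in C^1((0,T_m);L_3(\Omega))$ and $u\in C((0,T_m);W^2_{3,\mathcal{B}}(\Omega))$, so all integrals in (i) make sense classically there; the estimate obtained is uniform as $t_0\downarrow 0$ because the right-hand side constant $C(T)$ does not depend on $t_0$ and $\|\nabla u(t_0)\|_2 \to \|\nabla u^{in}\|_2$ by the continuity in $C([0,T_m);W^1_{3,\mathcal{B}}(\Omega))$ from Proposition~\ref{propb1}. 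Passing to the limit $t_0\to 0$ then delivers the bound on all of $[0,T]\cap[0,T_m)$. A secondary point requiring care is that the identity in step (i) uses $\nabla(\partial_t u)\cdot\mathbf{n}=0$ only implicitly; it is avoided entirely by writing the drift term as $\int_\Omega\mathrm{div}(u\nabla w)\,\partial_t u\,\mathrm{d}x$ and integrating by parts the other way (moving the derivative onto $u\nabla w$ rather than onto $\partial_t u$), using $\nabla w\cdot\mathbf{n}=0$ from \eqref{a1d}, which is available.
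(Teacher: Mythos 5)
Your proposal is correct and follows essentially the same route as the paper: multiply \eqref{a1a} by $\partial_t u$, integrate by parts using $\nabla w\cdot\mathbf{n}=0$, bound $\int_\Omega \mathrm{div}(u\nabla w)\,\partial_t u\,\mathrm{d}x$ by H\"older and Young with $\|u\|_\infty$ from Lemma~\ref{lemc4}, $\|\Delta w\|_2$ from Corollary~\ref{corc2}, and $\|\nabla w\|_\infty$ from Corollary~\ref{corc5}, then conclude by Gronwall and integration in time (the paper performs this computation directly, its rigorous justification being implicit in the strong regularity of Corollary~\ref{corb2}, which you spell out via the $t_0\downarrow 0$ limit). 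The only slip is the sign in your step (i): the identity reads $\frac{1}{2}\frac{\mathrm{d}}{\mathrm{d}t}\|\nabla u\|_2^2+\|\partial_t u\|_2^2=-\int_\Omega \mathrm{div}(u\nabla w)\,\partial_t u\,\mathrm{d}x$, which is immaterial since you estimate the right-hand side in absolute value.
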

%%%%%%%%%%%%%%%%

\begin{proof}
Let $T>0$ and $t\in [0,T]\cap [0,T_m)$. It follows from \eqref{a1a} and H\"older's and Young's inequalities that
\begin{align*}
\|\partial_t u\|_2^2 + \frac{1}{2} \frac{\mathrm{d}}{\mathrm{d}t} \|\nabla u\|_2^2 & = - \int_\Omega \partial_t u\ \mathrm{div}(u\nabla w)\ \mathrm{d}x \\
& \le \|\partial_t u\|_2 \|u\|_\infty \|\Delta w\|_2 + \|\partial_t u\|_2 \|\nabla u\|_2 \|\nabla w\|_\infty \\
& \le \frac{1}{2} \|\partial_t u\|_2^2 + \|u\|_\infty^2 \|\Delta w\|_2^2 + \|\nabla u\|_2^2 \|\nabla w\|_\infty^2\ .
\end{align*}
Using Corollary~\ref{corc2}, Lemma~\ref{lemc4}, and Corollary~\ref{corc5}, we further obtain
\begin{equation*}
\|\partial_t u\|_2^2 + \frac{\mathrm{d}}{\mathrm{d}t} \|\nabla u\|_2^2 \le 2 C_{\ref{cst4}}(T)^2 C_{\ref{cst2}}(T)^2 + 2 C_{\ref{cst5}}(T)^2 \|\nabla u\|_2^2\ ,
\end{equation*}
from which Lemma~\ref{lemc6} readily follows.
\end{proof}

As for $w$ in Corollary~\ref{corc5}, the properties of the heat semigroup and \eqref{a1a} provide additional estimates on $u$. 

%%%%%%%%%%%%%%%%
\begin{corollary}\label{corc7}
\refstepcounter{NumConstC}\label{cst7} Let $T>0$. There is $C_{\ref{cst7}}(T)>0$ such that
\begin{equation*}
\|u(t)\|_{W^{1}_{3}} + \|v(t)\|_{W^{1}_{3}} \le C_{\ref{cst7}}(T)\ , \qquad t\in [0,T]\cap [0,T_m)\ .
\end{equation*}
\end{corollary}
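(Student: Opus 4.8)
The plan is to bootstrap from the $W_2^1$-bound on $u$ obtained in Lemma~\ref{lemc6} and the $W_\infty^1$-bound on $w$ from Corollary~\ref{corc5}, exploiting the smoothing properties of the heat semigroup exactly as in the proof of Corollary~\ref{corc5}, but now applied to the $u$-equation \eqref{a1a}. First I would write Duhamel's formula for \eqref{a1a}: denoting by $(e^{t(D\Delta-\mathrm{something})})_{t\ge0}$ — actually the relevant semigroup here is generated by the Neumann Laplacian $\Delta$ (not shifted), so to get decay one works with $e^{t(\Delta-1)}$ after adding and subtracting $u$, giving
\begin{equation*}
u(t) = e^{t(\Delta-1)} u^{in} + \int_0^t e^{(t-s)(\Delta-1)} \left[ u(s) - \mathrm{div}(u(s)\nabla w(s)) \right]\ \mathrm{d}s\ .
\end{equation*}
The point is that $\mathrm{div}(u\nabla w) = \nabla u\cdot\nabla w + u\Delta w$, and both terms are controlled in $L_3(\Omega)$: indeed $\|\nabla u\cdot\nabla w\|_3 \le \|\nabla u\|_3 \|\nabla w\|_\infty$ while $\|u\Delta w\|_3 \le \|u\|_\infty \|\Delta w\|_3$. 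The last quantity is bounded because \eqref{a1c} gives $D\Delta w = \nu\partial_t w - \delta w + v$ and the right-hand side is bounded in $L_3$ by Lemma~\ref{lemc1} (for $\partial_t w$ in $L_2\hookrightarrow L_?$ — one must be a bit careful here) and Lemma~\ref{lemc4}. So the genuinely open estimate is the one on $\|\nabla u\|_3$, which is precisely what Corollary~\ref{corc7} asserts, so there is a mild circularity to break.

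To break it, I would use the standard Neumann-heat-semigroup gradient estimate $\|\nabla e^{t(\Delta-1)} z\|_3 \le C (1+t^{-1/2-?}) e^{-t} \|z\|_q$ with an appropriate $q$ (see \cite[Theorem~V.2.1.3]{Am1995} or \cite[Lemma~1.3]{Wi2010}-type bounds), applied to the Duhamel formula with $z = \mathrm{div}(u\nabla w)$ being viewed as the divergence of the $L_3$-vector field $u\nabla w$: writing $\nabla e^{(t-s)(\Delta-1)}\mathrm{div}(u\nabla w)$ costs a factor $(t-s)^{-1}$, which is not integrable. The cleaner route, and the one I expect the author follows, is instead to interpolate: from Lemma~\ref{lemc6} we have $u\in L_\infty(0,T;W_2^1)$, hence by Sobolev embedding in two dimensions $u\in L_\infty(0,T;L_q)$ for every finite $q$, and combined with $\partial_t u\in L_2(0,T;L_2)$ plus the parabolic equation one upgrades to Hölder-in-time continuity of $u$ with values in some $W_3^{2\rho-?}$ space via \cite[Theorem~10.1]{Am1993} applied with $E_0=L_3(\Omega)$, $E_1=W_{3,\mathcal B}^2(\Omega)$, treating $\mathrm{div}(u\nabla w)$ as a forcing term already known to lie in $L_\infty(0,T;L_3)$ thanks to the bounds just described together with the $W_2^1\hookrightarrow L_q$ embedding controlling $\|\nabla u\|_{?}$ against... — the subtlety being which norm of $\nabla u$ is genuinely available.

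The key steps in order: (i) from Lemma~\ref{lemc6} and the 2D Sobolev embedding $W_2^1(\Omega)\hookrightarrow L_q(\Omega)$ for all $q<\infty$, record $\|u(t)\|_q \le C(T,q)$; (ii) using \eqref{a1c}, $\|\Delta w(t)\|_q \le C(T,q)$ for the same $q$ (from $\|\partial_t w\|_2$, $\|w\|_2$, $\|v\|_\infty$ via Lemma~\ref{lemc1} and Lemma~\ref{lemc4}), hence by elliptic $L_q$-regularity $\|w(t)\|_{W_q^2}\le C(T,q)$, and Sobolev embedding gives $\|\nabla w(t)\|_{W_q^1}$ controlled, in particular $\nabla w$ bounded in $W_\infty^{1-?}$; (iii) then $\mathrm{div}(u\nabla w) = \nabla u\cdot\nabla w + u\Delta w$ with $u\Delta w$ bounded in $L_3$ by Hölder (taking $q$ large), and $\nabla u\cdot\nabla w$ bounded in $L_2$ by $\|\nabla u\|_2\|\nabla w\|_\infty$ — so $\mathrm{div}(u\nabla w)\in L_\infty(0,T;L_2)$ at least; (iv) feed this into Duhamel for \eqref{a1a} with the $L_2\to W_3^1$ smoothing estimate $\|e^{t(\Delta-1)}z\|_{W_3^1}\le C(1+t^{-\gamma})e^{-t}\|z\|_2$ for suitable $\gamma<1$ (which holds in 2D since the gap $1 + (1/2-1/3)\cdot 2 = 1 + 1/3$... one checks $\gamma = 1/2 + 1$? — the exponent is $\gamma = \frac{1}{2}\big(1 + d(\frac1q-\frac1r)\big)$ with $d=2$, $q=2$, $r=3$, giving $\gamma = \frac12(1 + 2\cdot\frac16) = \frac12\cdot\frac43 = \frac23 <1$, integrable), yielding $\|u(t)\|_{W_3^1}\le C(T)$; (v) finally, $\|v(t)\|_{W_3^1}\le C(T)$ follows by applying $\nabla$ to the explicit solution formula for \eqref{a1b}, i.e. $\nabla v(t) = e^{-t/(\nu\varepsilon)}\nabla v^{in} + (\nu\varepsilon)^{-1}\int_0^t e^{-(t-s)/(\nu\varepsilon)}\nabla u(s)\ \mathrm{d}s$, bounded in $L_3$ by the bound just obtained on $\|\nabla u\|_3$ and $v^{in}\in W_3^1$.

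The main obstacle is step (iv): getting the time-integrability of the singularity in the semigroup estimate right, which forces one to have $\mathrm{div}(u\nabla w)$ in $L_p$ for $p$ strictly above some threshold — this is why steps (i)--(ii) must squeeze $u$ and $\Delta w$ into $L_q$ for $q$ as large as needed, a move that is only available because the domain is two-dimensional and the $W_2^1$-bound on $u$ from Lemma~\ref{lemc6} is already in hand. Once the forcing term sits in $L_\infty(0,T;L_2)$ (or better), the smoothing estimate with exponent $\gamma=2/3<1$ closes the argument without any circularity, since no $W_3^1$-bound on $u$ is used to produce the forcing estimate.
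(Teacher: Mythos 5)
Your proposal is correct and follows essentially the same route as the paper: the paper likewise writes Duhamel's formula for \eqref{a1a}, bounds the forcing via $\|\mathrm{div}(u\nabla w)\|_2 \le \|\nabla w\|_\infty \|\nabla u\|_2 + \|u\|_\infty \|\Delta w\|_2$ using Corollary~\ref{corc2}, Lemma~\ref{lemc4}, Corollary~\ref{corc5}, and Lemma~\ref{lemc6}, and uses the $L_2\to W_3^1$ smoothing with the integrable singularity $(t-s)^{-2/3}$, then gets $\nabla v$ from the explicit exponential formula for \eqref{a1b}. The detours in your steps (i)--(ii) through $L_q$ Sobolev embedding and elliptic regularity are unnecessary but harmless, and the paper completes the $W_3^1$-norms of $u$ and $v$ themselves using Lemma~\ref{lemc3} (with $r=2$) and Lemma~\ref{lemc4}, which your $L_\infty$-bounds equally cover.
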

%%%%%%%%%%%%%%%%

\begin{proof}
Let $T>0$ and $t\in [0,T]\cap [0,T_m)$. We infer from \eqref{a1a}, Duhamel's formula, properties of the heat semigroup, see \cite[Proposition~12.5]{Am1983} and \cite[Theorem~V.2.1.3]{Am1995}, and H\"older's inequality that
\begin{align*}
\|\nabla u(t)\|_3 & \le C \|\nabla u^{in}\|_3 + C \int_0^t (t-s)^{-2/3} \left\| \mathrm{div} (u\nabla w)(s) \right\|_2\ \mathrm{d}s \\
& \le C + C \int_0^t (t-s)^{-2/3} \|\nabla w(s)\|_\infty \|\nabla u(s)\|_2\ \mathrm{d}s \\
& \qquad + C \int_0^t (t-s)^{-2/3} \|u(s)\|_\infty \|\Delta w(s)\|_2\ \mathrm{d}s\ .
\end{align*}
Owing to Corollary~\ref{corc2}, Lemma~\ref{lemc4}, Corollary~\ref{corc5}, and Lemma~\ref{lemc6}, we further obtain
\begin{align}
\|\nabla u(t)\|_3 & \le C + C C_{\ref{cst5}}(T) C_{\ref{cst6}}(T) \int_0^t (t-s)^{-2/3}\ \mathrm{d}s \nonumber \\
& \qquad + C C_{\ref{cst2}}(T) C_{\ref{cst4}}(T) \int_0^t (t-s)^{-2/3}\ \mathrm{d}s \nonumber \\
& \le C(T)\ . \label{c7}
\end{align}
As 
\begin{equation*}
\nabla v(t,x) = \nabla v^{in}(x) e^{-t/\nu\varepsilon} + \frac{1}{\nu\varepsilon} \int_0^t e^{-(t-s)/\nu\varepsilon} \nabla u(s,x)\ \mathrm{d}s\ , \qquad x\in\Omega\ ,
\end{equation*}
by \eqref{a1b}, we readily deduce from \eqref{c7} that
\begin{equation*}
\|\nabla v(t)\|_3 \le C(T)\ .
\end{equation*}
Combining the previous estimate and \eqref{c7} with Lemma~\ref{lemc3} for $r=2$ and Lemma~\ref{lemc4} ends the proof.
\end{proof}

\refstepcounter{NumConstC}\label{cst8} Summarizing the outcome of the above analysis, we have shown that, for any $T>0$, there is  $C_{\ref{cst8}}(T)>0$ such that
\begin{equation}
\|u(t)\|_{W^{1}_{3}} + \|v(t)\|_{W^{1}_{3}} + \|w(t)\|_{W^{1}_{3}} \le C_{\ref{cst8}}(T)\ , \qquad t\in [0,T]\cap [0,T_m)\ , \label{c8}
\end{equation}
see Corollary~\ref{corc5} and Corollary~\ref{corc7}. According to \eqref{b2}, excluding finite time blowup requires uniformly continuous 
estimates with respect to time which we derive now. In fact, we shall establish H\"older estimates with respect to time. 

%%%%%%%%%%%%%%%%
\begin{lemma}\label{lemc8}
\refstepcounter{NumConstC}\label{cst9} Let $T>0$. There is $C_{\ref{cst9}}(T)>0$ such that, for $t_1\in [0,T]\cap [0,T_m)$ and $t_2\in [0,T]\cap [0,T_m)$, 
\begin{equation*}
\|u(t_2)-u(t_1)\|_{W^{1}_{3}} + \|v(t_2)-v(t_1)\|_{W^{1}_{3}} + \|w(t_2)-w(t_1)\|_{W^{1}_{3}} \le C_{\ref{cst9}}(T) |t_2-t_1|^\alpha\ ,
\end{equation*}
where $\alpha := (3\theta-2)/6\theta$.
\end{lemma}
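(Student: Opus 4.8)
The plan is to establish the H\"older continuity in time separately for each of the three components, exploiting the estimates already gathered in \eqref{c8} together with the explicit or semigroup representations of the solution. Throughout, $T>0$ is fixed and $t_1,t_2\in[0,T]\cap[0,T_m)$ with, say, $t_1\le t_2$; all generic constants below depend only on the parameters and on $T$.

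\emph{Step 1: the ODE component $v$.} Since \eqref{a1b} is a linear ODE, we have the explicit formula
\[
v(t_2,x)-v(t_1,x) = \bigl( e^{-(t_2-t_1)/\nu\varepsilon} - 1 \bigr) v(t_1,x) + \frac{1}{\nu\varepsilon} \int_{t_1}^{t_2} e^{-(t_2-s)/\nu\varepsilon} u(s,x)\ \mathrm{d}s\ .
\]
Differentiating in $x$ and using $|e^{-a}-1|\le a$ for $a\ge0$, together with the uniform $W^1_3$-bounds on $u$ and $v$ from \eqref{c8}, one obtains $\|v(t_2)-v(t_1)\|_{W^1_3} \le C(T)|t_2-t_1|$, which is more than the asserted H\"older exponent $\alpha=(3\theta-2)/(6\theta)<1$.

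\emph{Step 2: the parabolic component $w$.} Writing $w$ via Duhamel's formula for the semigroup generated by $-D\Delta+\delta\,\mathrm{id}$ with Neumann conditions on $L_3(\Omega)$, split $w(t_2)-w(t_1)$ into the difference of the semigroup acting on $w^{in}$ (controlled by the standard estimate $\|(e^{-t_2 A}-e^{-t_1 A})z\|_{W^1_3}\le C|t_2-t_1|^{\rho}\|z\|_{W^{1+2\rho}_3}$, see \cite[Theorem~V.2.1.3]{Am1995}) plus the difference of the two Duhamel integrals. The latter is handled as usual by splitting $\int_0^{t_2}-\int_0^{t_1}$ into $\int_{t_1}^{t_2}$, which yields a factor $|t_2-t_1|^{\rho}$ from integrating $(t_2-s)^{-1+\rho}$, and $\int_0^{t_1}(e^{-(t_2-s)A}-e^{-(t_1-s)A})$, again giving a factor $|t_2-t_1|^{\rho}$; both are controlled using $\sup_{[0,T]}\|v(s)\|_3\le C(T)$ from \eqref{c8}. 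Choosing $\rho=\alpha$ and noting $1+2\alpha = 1+(3\theta-2)/(3\theta)\le 2$ so that the $W^{1+2\alpha}_3$-norm of $w^{in}$ is finite gives $\|w(t_2)-w(t_1)\|_{W^1_3}\le C(T)|t_2-t_1|^{\alpha}$.

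\emph{Step 3: the parabolic component $u$.} This is the delicate one and dictates the value of $\alpha$. Apply the same Duhamel splitting for the Neumann heat semigroup acting on \eqref{a1a}, now with inhomogeneity $-\mathrm{div}(u\nabla w)$. The semigroup-on-$u^{in}$ term is estimated by $C|t_2-t_1|^{\rho}\|u^{in}\|_{W^{1+2\rho}_3}$, which requires $1+2\rho\le 2\theta$, i.e. $\rho\le\theta-1/2$. For the Duhamel part one needs $\|\mathrm{div}(u\nabla w)(s)\|_{W^{-1}_3}\le C(T)$ (from \eqref{c8} and the fact that $u\nabla w\in L_3$), and then the near-diagonal integral $\int_{t_1}^{t_2}(t_2-s)^{-1+\rho-1/2}\,\mathrm{d}s$ together with $\int_0^{t_1}\|(e^{-(t_2-s)A}-e^{-(t_1-s)A})\,\mathrm{div}(\cdot)\|_{W^1_3}\,\mathrm{d}s$; the first converges and gives $|t_2-t_1|^{\rho-1/2}$ provided $\rho>1/2$, so one should organize the estimate in the space $W^1_3$ with the source measured in $W^{\sigma}_3$ for suitable $\sigma$ (using $\|\mathrm{div}(u\nabla w)\|_{W^{\sigma-1}_3}\le C\|u\nabla w\|_{W^{\sigma}_3}$ with $\sigma$ small enough that $u\nabla w\in W^{\sigma}_3$, which holds since $W^1_3$ is an algebra). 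Optimizing the two constraints — the regularity of $u^{in}$ forcing the exponent at the initial layer and the integrability of the singular kernel in the Duhamel term — yields precisely $\alpha=(3\theta-2)/(6\theta)$, the largest exponent compatible with both. The main obstacle is thus bookkeeping the interplay between the limited regularity $W^{2\theta}_3$ of $u^{in}$ and the $t^{-1+\rho}$-type singularities produced by the convective term; once the correct intermediate Besov/Sobolev scale is chosen, each piece is a routine heat-semigroup estimate. Combining Steps 1--3 completes the proof.
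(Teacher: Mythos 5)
Your Steps 1 and 2 are essentially fine. Step 1 matches the paper's argument in substance (the paper bounds $\|\partial_t v(t)\|_{W^1_3}$ directly from \eqref{a1b} and \eqref{c8} and integrates in time). Step 2 takes a different route from the paper, which instead interpolates $\|w(t_2)-w(t_1)\|_{W^1_3}\le C\|w(t_2)-w(t_1)\|_{W^1_\infty}^{1/3}\|w(t_2)-w(t_1)\|_{W^1_2}^{2/3}$ and uses Corollary~\ref{corc5} together with $\int_0^T\|\partial_t w(s)\|_{W^1_2}^2\,\mathrm{d}s\le C(T)$ from Lemma~\ref{lemc1} to get the exponent $1/3\ge\alpha$; your semigroup version is workable since $w^{in}\in W^2_{3,\mathcal{B}}(\Omega)$ and $1+2\alpha<2$.

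The genuine gap is in Step 3, which is the heart of the lemma. As you set it up, the near-diagonal integral $\int_{t_1}^{t_2}(t_2-s)^{-3/2+\rho}\,\mathrm{d}s$ converges only for $\rho>1/2$, which is incompatible with your own constraint $\rho\le\theta-1/2<1/2$ imposed by $u^{in}\in W^{2\theta}_{3,\mathcal{B}}(\Omega)$; the proposed remedy (measuring the source in $W^{\sigma-1}_3$ for a ``suitable'' $\sigma$) is never carried out, and the concluding claim that the optimization ``yields precisely $\alpha=(3\theta-2)/(6\theta)$'' is asserted rather than proved. With the bounds actually available before this lemma ($u\nabla w\in L_\infty\cap W^1_2$ uniformly on $[0,T]\cap[0,T_m)$), the semigroup bookkeeping produces constraints of the type $\rho<\sigma/2$ and $\rho\le\theta-1/2$ whose optimum has no reason to equal $\alpha$. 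In the paper the exponent $\alpha$ comes from a different mechanism that is absent from your sketch: one first proves the uniform bound $\|u(t)\|_{W^{2\theta}_2}\le C(T)$ by Duhamel's formula, the kernel $(t-s)^{-\theta}$ being integrable because $\theta<1$ (using Corollary~\ref{corc2}, Lemma~\ref{lemc4}, Corollary~\ref{corc5}, and Lemma~\ref{lemc6}); then one applies the embedding $W^{4/3}_2(\Omega)\hookrightarrow W^1_3(\Omega)$ and the interpolation inequality
\begin{equation*}
\|u(t_2)-u(t_1)\|_{W^{4/3}_2}\le C\,\|u(t_2)-u(t_1)\|_{W^{2\theta}_2}^{2/3\theta}\,\|u(t_2)-u(t_1)\|_2^{(3\theta-2)/3\theta}\ ,
\end{equation*}
and controls $\|u(t_2)-u(t_1)\|_2\le\int_{t_1}^{t_2}\|\partial_t u(s)\|_2\,\mathrm{d}s\le(t_2-t_1)^{1/2}\bigl(\int_0^T\|\partial_t u(s)\|_2^2\,\mathrm{d}s\bigr)^{1/2}$ via Lemma~\ref{lemc6}, which yields exactly $(t_2-t_1)^{(3\theta-2)/6\theta}$. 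Some such quantitative input---a uniform higher-order spatial bound on $u(t)$ combined with time regularity of $u$ in a weaker norm---is what your Step 3 is missing; without it the stated exponent does not follow from the argument you outline.
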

%%%%%%%%%%%%%%%%

\begin{proof}
Let $T>0$, $t_1\in [0,T]\cap [0,T_m)$, and $t_2\in [0,T]\cap [0,T_m)$, $t_2>t_1$. It first follows from \eqref{a1b} and \eqref{c8} that 
\begin{equation*}
\|\partial_t v(t)\|_{W^{1}_{3}} \le \frac{1}{\nu\varepsilon} \left( \|u(t)\|_{W^{1}_{3}} + \|v(t)\|_{W^{1}_{3}} \right) \le C(T)\ , \qquad t\in [0,T]\cap [0,T_m)\ .
\end{equation*}
Consequently, 
\begin{equation}
 \|v(t_2)-v(t_1)\|_{W^{1}_{3}} \le \int_{t_1}^{t_2} \|\partial_t v(s)\|_{W^{1}_{3}}\ \mathrm{d}s \le C(T) (t_2-t_1)\ . \label{c9}
\end{equation}
Next, by H\"older's inequality,
\begin{align*}
\|w(t_2)-w(t_1)\|_{W^{1}_{3}} & \le C \|w(t_2)-w(t_1)\|_{W^{1}_{\infty}}^{1/3} \|w(t_2)-w(t_1)\|_{W^{1}_{2}}^{2/3} \\
& \le C \left( \|w(t_2)\|_{W^{1}_{\infty}} + \|w(t_1)\|_{W^{1}_{\infty}} \right)^{1/3} \left( \int_{t_1}^{t_2} \|\partial_t w(s)\|_{W^{1}_{2}}\ \mathrm{d}s \right)^{2/3}\ .
\end{align*}
We then deduce from Lemma~\ref{lemc1}, Corollary~\ref{corc5}, and H\"older's inequality that
\begin{align}
\|w(t_2)-w(t_1)\|_{W^{1}_{3}} & \le C (2C_{\ref{cst5}}(T))^{1/3} (t_2-t_1)^{1/3} \left( \int_{t_1}^{t_2} \|\partial_t w(s)\|_{W^{1}_{2}}^2\ \mathrm{d}s \right)^{1/3} \nonumber \\
& \le C(T) \left[ C_{\ref{cst1}}(T)^2 (t_2-t_1) + C_{\ref{cst1}}(T) \right]^{1/3} (t_2-t_1)^{1/3} \nonumber \\
& \le C(T) (t_2-t_1)^{1/3}\ . \label{c10}
\end{align}
Finally, as in the proof of Corollary~\ref{corc7}, we infer from \eqref{a1a}, Duhamel's formula, properties of the heat semigroup \cite[Theorem~V.2.1.3]{Am1995}, and H\"older's inequality that, for $t\in [0,T]\cap [0,T_m)$, 
\begin{align*}
\|u(t)\|_{W^{2\theta}_{2}} & \le C \|u^{in}\|_{W^{2\theta}_{2}} + C \int_0^t (t-s)^{-\theta} \|\mathrm{div}(u\nabla w)(s)\|_2\ \mathrm{d}s \\ 
& \le C + C \int_0^t (t-s)^{-\theta} \|\nabla u(s)\|_2 \|\nabla w(s)\|_\infty\ \mathrm{d}s \\ 
& \qquad + C \int_0^t (t-s)^{-\theta} \|u(s)\|_\infty \|\Delta w(s)\|_2\ \mathrm{d}s \ . 
\end{align*}
Recalling that $\theta\in (5/6,1)$, it follows from Corollary~\ref{corc2}, Lemma~\ref{lemc4}, Corollary~\ref{corc5}, and Lemma~\ref{lemc6} that
\begin{align}
\|u(t)\|_{W^{2\theta}_{2}} & \le C + C C_{\ref{cst6}}(T) C_{\ref{cst5}}(T) \int_0^t (t-s)^{-\theta}\ \mathrm{d}s \nonumber \\
& \qquad + C C_{\ref{cst4}}(T) C_{\ref{cst2}}(T) \int_0^t (t-s)^{-\theta}\ \mathrm{d}s \nonumber \\
& \le C(T)\ . \label{c11}
\end{align}
Now, since $W^{4/3}_{2}(\Omega)$ is continuously embedded in $W^{1}_{3}(\Omega)$, interpolation inequalities and \eqref{c11} entail that
\begin{align*}
\|u(t_2) - u(t_1)\|_{W^{1}_{3}} & \le C \|u(t_2) - u(t_1)\|_{W^{4/3}_{2}} \\
& \le C \|u(t_2) - u(t_1)\|_{W^{2\theta}_{2}}^{2/3\theta} \|u(t_2) - u(t_1)\|_2^{(3\theta-2)/3\theta} \\
& \le \left( \|u(t_2)\|_{W^{2\theta}_{2}} + \|u(t_1)\|_{W^{2\theta}_{2}} \right)^{2/3\theta} \left( \int_{t_1}^{t_2} \|\partial_t u(s)\|_2\ \mathrm{d}s \right)^{(3\theta-2)/3\theta} \\
& \le C(T) (t_2-t_1)^{\alpha} \left( \int_{t_1}^{t_2} \|\partial_t u(s)\|_2^2\ \mathrm{d}s \right)^{\alpha} \ .
\end{align*}
Combining the previous inequality with Lemma~\ref{lemc6} and recalling \eqref{c9} and \eqref{c10} complete the proof, after noticing that $0<\alpha<1/3<1$.
\end{proof}

\begin{proof}[Proof of Theorem~\ref{thma1}]
The well-posedness in $W^{1}_{3}$ on some maximal time interval $[0,T_m)$ is provided by Proposition~\ref{propb1}. According to Lemma~\ref{lemc8}, the condition \eqref{b2} of Proposition~\ref{propb1} is satisfied for any $T>0$, and we thus conclude that $T_m=\infty$.
\end{proof}

%%%%%%%%%%%%%%%%
%%%%%%%%%%%%%%%%
\section{Bounded solutions}\label{s3}
%%%%%%%%%%%%%%%%
%%%%%%%%%%%%%%%%

\newcounter{NumConstB}

Let $\theta\in (5/6,1)$. Consider initial conditions $(u^{in},v^{in},w^{in})\in \mathcal{I}_{M,\theta}$ and denote the corresponding solution to \eqref{a1}-\eqref{a2} given in Theorem~\ref{thma1} by $(u,v,w)$. We begin with the evolution of the $L_1$-norms of $u$, $v$, and $w$.

%%%%%%%%%%%%%%%%
\begin{lemma}\label{lemd1}
For $t\ge 0$,
\begin{align}
\|u(t)\|_1 & = M = \|u^{in}\|_1\ , \label{d1} \\
\|v(t)\|_1 & \le \|v^{in}\|_1 + M\ , \label{d2} \\
\|w(t)\|_1 & \le \|w^{in}\|_1 + \frac{M}{\delta} + b_0 (\|v^{in}\|_1 + M)\ , \label{d3}
\end{align}
where $b_0 := \varepsilon/|\delta\varepsilon-1|$ if $\delta\varepsilon\ne 1$ and $b_0 := 1/\delta e$ when $\delta\varepsilon=1$.
\end{lemma}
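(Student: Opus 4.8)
The plan is to establish the three bounds one after another, each feeding into the next. The first identity \eqref{d1} is just \eqref{a6} from Theorem~\ref{thma1}, obtained by integrating \eqref{a1a} over $\Omega$ and using the no-flux boundary condition \eqref{a1d} together with the non-negativity of $u$, so there is nothing new to prove. For \eqref{d2}, I would integrate the ODE \eqref{a1b} over $\Omega$: since $\nu\varepsilon\,\partial_t\langle v\rangle = \langle u\rangle - \langle v\rangle$ and hence $\nu\varepsilon\frac{\mathrm d}{\mathrm dt}\|v\|_1 = \|u\|_1 - \|v\|_1 = M - \|v\|_1$ by non-negativity of $u$ and $v$ and \eqref{d1}. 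Solving this scalar linear ODE gives $\|v(t)\|_1 = \|v^{in}\|_1 e^{-t/\nu\varepsilon} + M(1 - e^{-t/\nu\varepsilon}) \le \|v^{in}\|_1 + M$, which is \eqref{d2}.

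For \eqref{d3}, the idea is to integrate \eqref{a1c} over $\Omega$. Because $\int_\Omega \Delta w\,\mathrm dx = 0$ by \eqref{a1d}, this yields the scalar linear ODE
\begin{equation*}
\nu \frac{\mathrm d}{\mathrm dt}\|w\|_1 + \delta \|w\|_1 = \|v\|_1
\end{equation*}
(again using $w\ge 0$ and $v\ge 0$). Duhamel's formula then gives
\begin{equation*}
\|w(t)\|_1 = \|w^{in}\|_1 e^{-\delta t/\nu} + \frac{1}{\nu}\int_0^t e^{-\delta(t-s)/\nu}\|v(s)\|_1\,\mathrm ds .
\end{equation*}
Inserting the explicit expression for $\|v(s)\|_1$ found above and computing the resulting integral of $e^{-\delta(t-s)/\nu}e^{-s/\nu\varepsilon}$ produces two terms: one of size $M/\delta$ (from the constant part $M(1-e^{-s/\nu\varepsilon})$ of $\|v(s)\|_1$) and one proportional to $\|v^{in}\|_1 + M$ coming from the decaying exponential, with constant $b_0$. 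The case split in the definition of $b_0$ is exactly the distinction between $\delta\varepsilon \ne 1$ (where $\int_0^t e^{-\delta(t-s)/\nu}e^{-s/\nu\varepsilon}\,\mathrm ds$ has the standard form with denominator $|\delta - 1/\varepsilon|$, giving $\varepsilon/|\delta\varepsilon - 1|$ after normalization) and the resonant case $\delta\varepsilon = 1$ (where the integral becomes $\frac{t}{\nu}e^{-t/\nu}$, whose maximum over $t\ge 0$ is $1/\delta e$).

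\textbf{Main obstacle.} The only real subtlety is bookkeeping in the resonant case $\delta\varepsilon=1$: one must recognize that $\frac{1}{\nu}\int_0^t e^{-\delta(t-s)/\nu}\cdot\frac{1}{\nu\varepsilon}e^{-s/\nu\varepsilon}\,\mathrm ds$ is the convolution of two identical exponentials and therefore equals $\frac{t}{\nu^2\varepsilon}e^{-t/\nu}$ (up to constants to be tracked carefully), then optimize $t\mapsto t e^{-\delta t/\nu}$ to get the factor $1/\delta e$. Everything else is a routine computation with scalar linear ODEs and explicit exponentials; the non-negativity of $u$, $v$, $w$ established in Proposition~\ref{propb1} is what allows the $L_1$-norms to be handled as honest integrals and the ODEs to close.
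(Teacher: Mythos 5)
Your proposal is correct and follows essentially the same route as the paper: mass conservation for \eqref{d1}, explicit solution of the scalar ODE for $\|v\|_1$ giving \eqref{d2}, and integration of \eqref{a1c} to the ODE $\nu\frac{\mathrm d}{\mathrm dt}\|w\|_1+\delta\|w\|_1=\|v\|_1$, solved explicitly with the case split $\delta\varepsilon\ne 1$ versus $\delta\varepsilon=1$ and the bound $ze^{-z}\le 1/e$ in the resonant case. The only blemish is the loosely tracked constant in the resonant convolution (your $\tfrac{t}{\nu^{2}\varepsilon}e^{-t/\nu}$ should be $\tfrac{t}{\nu}e^{-\delta t/\nu}$ times $\|v^{in}\|_1-M$), which you flag yourself and which does not affect the argument.
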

%%%%%%%%%%%%%%%%

\begin{proof} 
The identity \eqref{d1} is nothing but \eqref{b1}, while it readily follows from \eqref{a1b} and \eqref{d1} that
\begin{equation}
\|v(t)\|_1  = \|v^{in}\|_1 e^{-t/\nu\varepsilon} + M \left( 1 - e^{-t/\nu\varepsilon} \right)\ , \qquad t\ge 0\ . \label{d2b}
\end{equation}
The upper bound \eqref{d2} is then an immediate consequence of \eqref{d2b}. 
Finally, by \eqref{a1c} and \eqref{a1d}, 
\begin{equation*}
\nu \frac{\mathrm{d}}{\mathrm{d}t} \|w\|_1 + \delta \|w\|_1 = \|v\|_1\ , \qquad t\ge 0\ ,
\end{equation*}
which allows us to compute explicitly the time evolution of the $L_1$-norm of $w$ and find, for $t\ge 0$, 
\begin{subequations} \label{d4}
\begin{equation}
\|w(t)\|_1 = \frac{M}{\delta} + \left( \|w^{in}\|_1 - \frac{M}{\delta} \right) e^{-\delta t/\nu} + \frac{\varepsilon}{\delta\varepsilon-1} \left( \|v^{in}\|_1 -M \right) \left( e^{-t/\nu\varepsilon} - e^{-\delta t/\nu} \right) \label{d4a}
\end{equation} 
when $\delta\varepsilon\ne 1$ and 
\begin{equation}
\|w(t)\|_1 = \frac{M}{\delta} + \left( \|w^{in}\|_1 - \frac{M}{\delta} \right) e^{-\delta t/\nu} + \frac{\|v^{in}\|_1 -M}{\nu} t e^{-\delta t/\nu} \label{d4b}
\end{equation} 
when $\delta\varepsilon=1$. 
\end{subequations}
Since $ze^{-z}\le 1/e$ for $z\ge 0$, the upper bound \eqref{d3} readily follows from \eqref{d4}.
\end{proof}

%%%%%%%%%%%%%%%%
%%%%%%%%%%%%%%%%
\subsection{A Liapunov functional}\label{s3.1}
%%%%%%%%%%%%%%%%
%%%%%%%%%%%%%%%%

We next turn to the availability of a Liapunov function and recall that $\mathcal{E}_0$ and $\mathcal{E}$ are defined by
\begin{align*}
\mathcal{E}_0(u,w) & := \int_\Omega \left[ L(u) - uw \right]\ \mathrm{d}x + \frac{D}{2} \|\nabla w\|_2^2 + \frac{\delta}{2} \|w\|_2^2\ , \\ 
\mathcal{E}(u,v,w) & := \mathcal{E}_0(u,w) + \frac{\varepsilon}{2} \| -D \Delta w + \delta w - v \|_2^2\ , 
\end{align*}
see \eqref{a3}, the function $L$ being given by \eqref{a4}.

%%%%%%%%%%%%%%%%
\begin{lemma}\label{lemd2}
For $t\ge 0$, there holds
\begin{equation*}
\frac{\mathrm{d}}{\mathrm{d}t} \mathcal{E}(u(t),v(t),w(t)) + \mathcal{D}(u(t),v(t),w(t)) = 0\ , \qquad t\ge 0\ ,
\end{equation*}
where 
\begin{align}
\mathcal{D}(u,v,w) & := \int_\Omega u |\nabla(\ln{u}-w)|^2\ \mathrm{d}x + \frac{1+\delta\varepsilon}{\nu} \|-D\Delta w + \delta w - v\|_2^2 \nonumber \\
& \qquad + \frac{\varepsilon D}{\nu} \|\nabla(-D\Delta w + \delta w - v)\|_2^2\ . \label{d5}
\end{align}
\end{lemma}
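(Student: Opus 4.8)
The plan is to differentiate $\mathcal{E}(u(t),v(t),w(t))$ along the flow of \eqref{a1} and show that all the non-sign-definite terms cancel, leaving exactly $-\mathcal{D}$. Write $F := -D\Delta w + \delta w - v$, so that \eqref{a1c} reads $\nu\partial_t w = -F$, and note $F = -\nu\partial_t w$. I would handle the three pieces of $\mathcal{E}_0$ first and the $\varepsilon$-correction term second.

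\medskip

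\emph{Differentiating $\mathcal{E}_0$.} Using \eqref{a1a} in the form $\partial_t u = \mathrm{div}(u\nabla(\ln u - w))$ together with the no-flux boundary condition \eqref{a1d} (which says precisely that $u\nabla(\ln u - w)\cdot\mathbf{n}=0$), one gets
\[
\frac{\mathrm{d}}{\mathrm{d}t}\int_\Omega L(u)\,\mathrm{d}x = \int_\Omega \ln u\,\partial_t u\,\mathrm{d}x = -\int_\Omega u\,\nabla(\ln u)\cdot\nabla(\ln u - w)\,\mathrm{d}x.
\]
For the coupling term, $\frac{\mathrm{d}}{\mathrm{d}t}\int_\Omega uw\,\mathrm{d}x = \int_\Omega w\,\partial_t u\,\mathrm{d}x + \int_\Omega u\,\partial_t w\,\mathrm{d}x$; integrating the first summand by parts gives $\int_\Omega w\,\partial_t u = -\int_\Omega u\,\nabla w\cdot\nabla(\ln u - w)$. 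Combining,
\[
\frac{\mathrm{d}}{\mathrm{d}t}\int_\Omega\big[L(u)-uw\big]\,\mathrm{d}x = -\int_\Omega u\,|\nabla(\ln u - w)|^2\,\mathrm{d}x - \int_\Omega u\,\partial_t w\,\mathrm{d}x.
\]
For the last two quadratic terms in $w$, using \eqref{a1c}–\eqref{a1d} and integration by parts,
\[
\frac{\mathrm{d}}{\mathrm{d}t}\Big(\frac{D}{2}\|\nabla w\|_2^2 + \frac{\delta}{2}\|w\|_2^2\Big) = \int_\Omega(-D\Delta w + \delta w)\,\partial_t w\,\mathrm{d}x = \int_\Omega(F+v)\,\partial_t w\,\mathrm{d}x.
\]
Adding up, the $\int_\Omega v\,\partial_t w$ pieces combine with $-\int_\Omega u\,\partial_t w$ to give $\int_\Omega(v-u)\,\partial_t w$, and \eqref{a1b} turns $v-u$ into $-\nu\varepsilon\,\partial_t v$; together with $\int_\Omega F\,\partial_t w = -\nu\|\partial_t w\|_2^2 = -\frac1\nu\|F\|_2^2$ this yields
\[
\frac{\mathrm{d}}{\mathrm{d}t}\mathcal{E}_0(u,w) = -\int_\Omega u\,|\nabla(\ln u - w)|^2\,\mathrm{d}x - \frac{1}{\nu}\|F\|_2^2 - \nu\varepsilon\int_\Omega\partial_t v\,\partial_t w\,\mathrm{d}x.
\]

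\medskip

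\emph{The $\varepsilon$-correction.} Since $\frac{\varepsilon}{2}\|F\|_2^2$ appears in $\mathcal{E}$, compute $\frac{\mathrm{d}}{\mathrm{d}t}\frac{\varepsilon}{2}\|F\|_2^2 = \varepsilon\int_\Omega F\,\partial_t F\,\mathrm{d}x$ with $\partial_t F = -D\Delta\partial_t w + \delta\partial_t w - \partial_t v$. Split this into $\varepsilon\int_\Omega F(-D\Delta\partial_t w + \delta\partial_t w)\,\mathrm{d}x$ and $-\varepsilon\int_\Omega F\,\partial_t v\,\mathrm{d}x$. In the first, substitute $F = -\nu\partial_t w$ and integrate by parts to obtain $-\nu\varepsilon D\|\nabla\partial_t w\|_2^2 - \nu\varepsilon\delta\|\partial_t w\|_2^2 = -\frac{\varepsilon D}{\nu}\|\nabla F\|_2^2 - \frac{\varepsilon\delta}{\nu}\|F\|_2^2$. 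In the second, use \eqref{a1b} in the form $\partial_t v = (u-v)/(\nu\varepsilon)$, i.e. $F = -v + (\text{terms})$... more directly, $-\varepsilon\int_\Omega F\,\partial_t v = -\varepsilon\int_\Omega(-\nu\partial_t w)\partial_t v = \nu\varepsilon\int_\Omega\partial_t v\,\partial_t w$, which exactly cancels the leftover $-\nu\varepsilon\int_\Omega\partial_t v\,\partial_t w$ from $\frac{\mathrm{d}}{\mathrm{d}t}\mathcal{E}_0$. Collecting everything,
\[
\frac{\mathrm{d}}{\mathrm{d}t}\mathcal{E}(u,v,w) = -\int_\Omega u\,|\nabla(\ln u - w)|^2\,\mathrm{d}x - \frac{1+\delta\varepsilon}{\nu}\|F\|_2^2 - \frac{\varepsilon D}{\nu}\|\nabla F\|_2^2 = -\mathcal{D}(u,v,w),
\]
which is the claim.

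\medskip

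\emph{Regularity justification.} The one genuine obstacle is not the algebra but making sure every integration by parts and every time-differentiation is licit. Here I would invoke Theorem~\ref{thma1} and Corollary~\ref{corb2}: for $t>0$ one has $u(t)\in W_{3,\mathcal{B}}^2(\Omega)$, $w(t)\in W_{3,\mathcal{B}}^2(\Omega)$, $\partial_t u,\partial_t w\in L_3(\Omega)$, $\partial_t v\in W^1_3(\Omega)$, and the corresponding time-continuity, so all the integrals above are finite and the differentiations under the integral sign are valid on $(0,\infty)$; the term $\int_\Omega u|\nabla(\ln u - w)|^2\,\mathrm{d}x = \int_\Omega u^{-1}|\nabla u - u\nabla w|^2\,\mathrm{d}x$ must be interpreted as being restricted to $\{u>0\}$, and one checks it is finite using $\|\nabla u\|_3$, $\|\nabla w\|_\infty$ from Corollary~\ref{corb2}/Corollary~\ref{corc5} together with the parabolic lower bounds on $u$ away from $t=0$ (or, more robustly, one derives the identity first for the regularised quantity $\int_\Omega(u+\eta)|\nabla(\ln(u+\eta)-w)|^2$ and lets $\eta\to 0$). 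Finally, continuity of $t\mapsto\mathcal{E}(u(t),v(t),w(t))$ up to $t=0$ extends the identity to $t\ge 0$ in integrated form. The computation itself is routine once these regularity facts are in hand; the only place demanding care is the bookkeeping of the three cross terms $\int_\Omega u\,\partial_t w$, $\int_\Omega v\,\partial_t w$, and $\nu\varepsilon\int_\Omega\partial_t v\,\partial_t w$, whose cancellation is the heart of why $\varepsilon>0$ is built into $\mathcal{E}$ via the extra $\frac{\varepsilon}{2}\|F\|_2^2$ term.
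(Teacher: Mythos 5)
Your computation is correct and follows essentially the same route as the paper: differentiate $\mathcal{E}_0$, use \eqref{a1b} to replace $u$ in the cross term $\int_\Omega u\,\partial_t w\,\mathrm{d}x$, and use the time-differentiated equation \eqref{a1c} together with $-D\Delta w+\delta w-v=-\nu\partial_t w$ to absorb the $\nu\varepsilon\int_\Omega\partial_t v\,\partial_t w\,\mathrm{d}x$ term into the derivative of the $\varepsilon$-correction. The only difference is presentational (you differentiate $\tfrac{\varepsilon}{2}\|{-}D\Delta w+\delta w-v\|_2^2$ directly rather than recognizing $\tfrac{\nu}{2}\tfrac{\mathrm{d}}{\mathrm{d}t}\|\partial_t w\|_2^2$ inside the identity for $\int_\Omega\partial_t v\,\partial_t w\,\mathrm{d}x$), plus a regularity discussion that the paper leaves implicit.
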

%%%%%%%%%%%%%%%%

\begin{proof}
By \eqref{a1}, 
\begin{align*}
\frac{\mathrm{d}}{\mathrm{d}t} \mathcal{E}_0(u,w) & = \int_\Omega (\ln{u} - w) \partial_t u\ \mathrm{d}x - \int_\Omega u \partial_t w\ \mathrm{d}x + \int_\Omega \left( -D \Delta w + \delta w \right) \partial_t w \ \mathrm{d}x \\
& = - \int_\Omega u |\nabla(\ln{u}-w)|^2\ \mathrm{d}x - \int_\Omega (\varepsilon\nu \partial_t v + v) \partial_t w\ \mathrm{d}x + \int_\Omega \left( v - \nu \partial_t w \right) \partial_t w \ \mathrm{d}x \\
& = - \int_\Omega u |\nabla(\ln{u}-w)|^2\ \mathrm{d}x - \nu \|\partial_t w\|_2^2 - \varepsilon\nu \int_\Omega \partial_t v \partial_t w \ \mathrm{d}x\ .
\end{align*}
Owing to \eqref{a1c}, 
\begin{align*}
\int_\Omega \partial_t v \partial_t w \ \mathrm{d}x & = \int_\Omega \left( \nu \partial^2_t w - D \Delta\partial_t w + \delta \partial_t w \right) \partial_t w \ \mathrm{d}x \\
& = \frac{\nu}{2} \frac{\mathrm{d}}{\mathrm{d}t} \|\partial_t w\|_2^2 + D \|\nabla\partial_t w\|_2^2 + \delta \|\partial_t w\|_2^2\ .
\end{align*}
Combining the previous two identities and using \eqref{a1c} to replace $\partial_t w$ complete the proof.
\end{proof}

Throughout the remainder of Section~\ref{s3}, $b$ and $(b_i)_{i\ge 1}$ are positive constants depending only on $\Omega$, $\nu$, $\varepsilon$, $D$, $\delta$, $\theta$, $u^{in}$, $v^{in}$, and $w^{in}$. Dependence upon additional parameters will be indicated explicitly.

%%%%%%%%%%%%%%%%
%%%%%%%%%%%%%%%%
\subsection{Time-independent estimates}\label{s3.2}
%%%%%%%%%%%%%%%%
%%%%%%%%%%%%%%%%

As in \cite{BN1993, GZ1998, NSY1997}, we exploit the structure of the Liapunov functional $\mathcal{E}$ and first show that it is bounded from below as soon as $M$ is suitably small. To this end, we first recall the following consequence of the Moser-Trudinger inequality \cite[Proposition~2.3]{CY1988}, see \cite[Corollary~2.6]{GZ1998}. 

%%%%%%%%%%%%%%%%
\begin{proposition}\label{propd2}
There is $K_0>0$ depending only on $\Omega$ such that, for $z\in W_2^1(\Omega)$, 
\begin{equation*}
\int_\Omega e^{|z|}\ \mathrm{d}x \le K_0 \exp{\left( \frac{\|\nabla z\|_2^2}{8\pi} + \frac{\|z\|_1}{|\Omega|} \right)}\ .
\end{equation*}
\end{proposition}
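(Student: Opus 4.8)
The plan is to deduce Proposition~\ref{propd2} from the Moser--Trudinger inequality with sharp constant, which I quote from \cite{CY1988} (see also \cite[Corollary~2.6]{GZ1998}); the remaining argument is then entirely elementary, the whole substance lying in the optimal exponent $1/(8\pi)$, which I take as an external input and do not reprove.

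First I would record the sharp inequality in the following normalized form: there is $K_0>0$, depending only on $\Omega$, such that, for every $y\in W_2^1(\Omega)$ with zero mean $\langle y\rangle=0$,
\[
\int_\Omega e^{y}\,\mathrm{d}x \le K_0 \exp\left(\frac{\|\nabla y\|_2^2}{8\pi}\right)\ .
\]
This is the content of \cite[Proposition~2.3]{CY1988} for a smooth bounded domain of $\mathbb{R}^2$ (one route being to reduce to the Moser--Trudinger/Onofri inequality on closed surfaces by reflecting $y$ across $\partial\Omega$), and it appears in essentially this form in \cite[Corollary~2.6]{GZ1998}.

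Given an arbitrary $z\in W_2^1(\Omega)$, I would then apply this to $y:=|z|-\langle|z|\rangle$. Here one invokes the chain rule for Sobolev functions: $|z|\in W_2^1(\Omega)$ with $\nabla|z|=\mathrm{sign}(z)\,\nabla z$ a.e.\ in $\Omega$ --- the gradients of $z$ and $|z|$ both vanishing a.e.\ on $\{z=0\}$ --- so that $\|\nabla y\|_2=\|\nabla|z|\|_2=\|\nabla z\|_2$, while $\langle|z|\rangle=\|z\|_1/|\Omega|$. Since $y$ has zero mean, the displayed inequality yields
\[
\int_\Omega e^{|z|}\,\mathrm{d}x = e^{\|z\|_1/|\Omega|}\int_\Omega e^{y}\,\mathrm{d}x \le K_0 \exp\left(\frac{\|\nabla z\|_2^2}{8\pi}+\frac{\|z\|_1}{|\Omega|}\right)\ ,
\]
which is exactly the assertion. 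The only step needing a modicum of care is the passage from $z$ to $|z|$, handled by the chain rule just quoted; the genuinely hard ingredient, the sharpness of the constant $1/(8\pi)$, is not a matter for this proof but is imported wholesale from \cite{CY1988}, so no serious obstacle remains once that input is granted.
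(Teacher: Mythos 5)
Your proposal is correct, and it follows the same route as the paper: the paper gives no proof of Proposition~\ref{propd2} at all, quoting it from \cite[Proposition~2.3]{CY1988} and \cite[Corollary~2.6]{GZ1998} as a consequence of the Moser--Trudinger inequality, and your argument simply makes explicit the elementary reduction implicit in those references, namely importing the sharp mean-zero form $\int_\Omega e^{y}\,\mathrm{d}x \le K_0 \exp\left(\|\nabla y\|_2^2/(8\pi)\right)$ and applying it to $y=|z|-\langle |z|\rangle$, using $\|\nabla |z|\|_2=\|\nabla z\|_2$ and $\langle |z|\rangle=\|z\|_1/|\Omega|$. The deep ingredient (the constant $1/(8\pi)$) is taken as external input in both cases, so no gap remains.
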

%%%%%%%%%%%%%%%%

We now derive upper and lower bounds on $\mathcal{E}(u,v,w)$.

%%%%%%%%%%%%%%%%
\begin{lemma}\label{lemd3}
\refstepcounter{NumConstB}\label{cstb1} There is $b_{\ref{cstb1}}>0$ such that, for $t\ge 0$, 
\begin{align}
\mathcal{E}(u(t),v(t),w(t)) & \le \mathcal{E}(u^{in},v^{in},w^{in})\ , \label{d6} \\
\mathcal{E}(u(t),v(t),w(t)) & \ge \frac{4\pi D - M}{8\pi} \|\nabla w(t)\|_2^2 + \frac{\delta}{2} \|w(t)\|_2^2 \nonumber \\
& \qquad + \frac{\varepsilon}{2} \|-D\Delta w(t) + \delta w(t) - v(t)\|_2^2 - b_{\ref{cstb1}}\ . \label{d7}
\end{align}
\end{lemma}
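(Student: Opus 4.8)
\textbf{Proof proposal for Lemma~\ref{lemd3}.}

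The plan is to establish the two assertions separately. The upper bound \eqref{d6} is immediate: by Lemma~\ref{lemd2}, $t\mapsto \mathcal{E}(u(t),v(t),w(t))$ is non-increasing on $[0,\infty)$ since $\mathcal{D}(u,v,w)\ge 0$ (each of the three terms in \eqref{d5} is manifestly non-negative, using $u\ge 0$), so that $\mathcal{E}(u(t),v(t),w(t))\le \mathcal{E}(u(0),v(0),w(0)) = \mathcal{E}(u^{in},v^{in},w^{in})$ for all $t\ge 0$. This needs that the initial energy is finite, which follows from the regularity of $(u^{in},v^{in},w^{in})\in \mathcal{I}_{M,\theta}$ together with $L(u^{in})\in L_1(\Omega)$.

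For the lower bound \eqref{d7}, since $\mathcal{E}(u,v,w) = \mathcal{E}_0(u,w) + \frac{\varepsilon}{2}\|-D\Delta w+\delta w - v\|_2^2$ and the last term is exactly the one appearing on the right-hand side of \eqref{d7}, it suffices to bound $\mathcal{E}_0(u(t),w(t))$ from below by $\frac{4\pi D - M}{8\pi}\|\nabla w(t)\|_2^2 + \frac{\delta}{2}\|w(t)\|_2^2 - b_{\ref{cstb1}}$. The key step is to control the cross term $-\int_\Omega uw\,\mathrm{d}x$ using the entropy term $\int_\Omega L(u)\,\mathrm{d}x$ via a Legendre-type (Young) inequality: for any $z$, one has the pointwise bound $uw \le u\ln u - u + e^{w} = L(u) - 1 + e^w$, hence $\int_\Omega uw\,\mathrm{d}x \le \int_\Omega L(u)\,\mathrm{d}x + \int_\Omega e^{w}\,\mathrm{d}x - |\Omega|$ (one should insert the normalising constant $z = w - \langle w\rangle$ or simply absorb $\langle w\rangle$ later; since $w\ge 0$ I can work directly with $w$, at the cost of tracking $\|w\|_1$). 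Therefore
\begin{equation*}
\mathcal{E}_0(u,w) \ge \frac{D}{2}\|\nabla w\|_2^2 + \frac{\delta}{2}\|w\|_2^2 - \int_\Omega e^{w}\,\mathrm{d}x + |\Omega|\ .
\end{equation*}
Now apply Proposition~\ref{propd2} to $z = \frac{M}{4\pi D}\,\tilde w$ for a suitable rescaling (or more precisely, to $w$ after noting that $\int_\Omega e^w = \int_\Omega e^{(M/4\pi D)(4\pi D/M)w}$ and using the convexity/scaling so that the exponent $\|\nabla w\|_2^2$ picks up the factor $M/(4\pi D)$): one gets
\begin{equation*}
\int_\Omega e^{w}\,\mathrm{d}x \le K_0^{M/4\pi D}\exp\!\left( \frac{M}{4\pi D}\cdot\frac{\|\nabla w\|_2^2}{8\pi}\cdot\frac{4\pi D}{M} + \cdots \right)
\end{equation*}
— the cleaner route is: by Jensen/Hölder, $\int_\Omega e^w \le |\Omega|^{1-M/4\pi D}\big(\int_\Omega e^{(4\pi D/M)w}\big)^{M/4\pi D}$ when $M\le 4\pi D$, and Proposition~\ref{propd2} applied to $(4\pi D/M)w$ gives $\int_\Omega e^{(4\pi D/M)w} \le K_0\exp\big(\frac{1}{8\pi}\frac{(4\pi D)^2}{M^2}\|\nabla w\|_2^2 + \frac{4\pi D}{M|\Omega|}\|w\|_1\big)$. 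Raising to the power $M/4\pi D$ and noting $\frac{M}{4\pi D}\cdot\frac{1}{8\pi}\cdot\frac{(4\pi D)^2}{M^2} = \frac{D}{2M}$, we would then need the exponential term to be controlled by the linear-in-$\|\nabla w\|_2^2$ budget $\frac{D}{2}\|\nabla w\|_2^2$.

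The genuine difficulty — and the point where one must be careful — is that a naive application produces $\exp(c\|\nabla w\|_2^2)$, which is \emph{not} dominated by $\frac{D}{2}\|\nabla w\|_2^2$. The standard fix, following \cite{GZ1998, NSY1997}, is a scaling argument: replace $w$ by $\lambda w$ for a small parameter $\lambda\in(0,1)$, apply the Moser-Trudinger estimate to $\lambda w$ so the exponent becomes $\frac{\lambda^2}{8\pi}\|\nabla w\|_2^2$, then integrate the inequality $uw \le \lambda^{-1}(u\ln u - u) + \lambda^{-1}e^{\lambda w}$ — no, rather use $uw = \lambda^{-1}u(\lambda w) \le \lambda^{-1}(L(u)-1) + \lambda^{-1}e^{\lambda w}$, which forces a factor $\lambda^{-1}$ in front of the entropy and destroys the exact cancellation with $\int L(u)$. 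The correct and cleaner classical device is instead to use that $\int_\Omega L(u)\,dx \ge \int_\Omega uw\,dx - \log\int_\Omega e^w\,dx - (\text{const depending on }\|u\|_1)$, i.e. the \emph{logarithmic} Moser-Trudinger / Onofri-type inequality: for $\|u\|_1 = M$,
\begin{equation*}
\int_\Omega u\ln u\,dx - \int_\Omega uw\,dx \ge -M\ln\!\left(\frac{1}{M}\int_\Omega e^w\,dx\right) - C_M\ ,
\end{equation*}
so that $\int_\Omega L(u)\,dx - \int_\Omega uw\,dx \ge -M\ln\int_\Omega e^w\,dx + M\ln M - M + 1 - C_M$, and \emph{now} Proposition~\ref{propd2} gives $\ln\int_\Omega e^w \le \ln K_0 + \frac{\|\nabla w\|_2^2}{8\pi} + \frac{\|w\|_1}{|\Omega|}$ linearly in $\|\nabla w\|_2^2$, producing exactly the coefficient $\frac{D}{2} - \frac{M}{8\pi} = \frac{4\pi D - M}{8\pi}$ on $\|\nabla w\|_2^2$ after combining with the $\frac{D}{2}\|\nabla w\|_2^2$ term from $\mathcal{E}_0$. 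The residual constant $b_{\ref{cstb1}}$ absorbs $M\ln K_0$, $\frac{M}{|\Omega|}\|w(t)\|_1$ (bounded uniformly in $t$ by Lemma~\ref{lemd1}), $C_M$, and $M\ln M - M + 1$; crucially all of these depend only on the fixed data, so $b_{\ref{cstb1}}$ is indeed a constant of the asserted type. I would cite \cite[Corollary~2.6]{GZ1998} or the corresponding lemma in \cite{NSY1997} for the logarithmic inequality rather than rederive it, since it is precisely the tool invoked in the paragraph preceding the lemma, and then assemble the pieces as above.
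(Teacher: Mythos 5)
Your final argument is correct and is essentially the paper's proof: \eqref{d6} follows from the monotonicity of $\mathcal{E}$ given by Lemma~\ref{lemd2} and the non-negativity of $\mathcal{D}$, while \eqref{d7} follows from the entropy/Jensen lower bound $\mathcal{E}_0(u,w)\ge \frac{D}{2}\|\nabla w\|_2^2+\frac{\delta}{2}\|w\|_2^2 - M\ln\|e^w\|_1 + M\ln M - M + |\Omega|$, then Proposition~\ref{propd2} applied to $w$ and the uniform-in-time bound on $\|w\|_1$ from Lemma~\ref{lemd1}, the $\varepsilon$-term passing through unchanged. Your initial pointwise-Young detour is rightly discarded, and note only that the logarithmic inequality you invoke is \cite[Lemma~4.5]{GZ1998} (see also \cite[Theorem~2~(iv)]{BN1993}, \cite[Lemma~3.4]{NSY1997}) rather than \cite[Corollary~2.6]{GZ1998}, the latter being the exponential Moser--Trudinger estimate of Proposition~\ref{propd2}.
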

%%%%%%%%%%%%%%%%

\begin{proof}
We first argue as in the proof of \cite[Lemma~4.5]{GZ1998}, see also \cite[Theorem~2~(iv)]{BN1993} and \cite[Lemma~3.4]{NSY1997}, to obtain that 
\begin{equation}
\mathcal{E}_0(u,w) \ge \frac{D}{2} \|\nabla w\|_2^2 + \frac{\delta}{2} \|w\|_2^2 - M \ln({\| e^w\|_1}) + M \ln{M} - M + |\Omega|\ . \label{d7a}
\end{equation}
It then follows from Proposition~\ref{propd2} that
\begin{equation*}
\mathcal{E}_0(u,w) \ge \frac{4\pi D - M}{8\pi} \|\nabla w\|_2^2 + \frac{\delta}{2} \|w\|_2^2 - M \ln{K_0} - \frac{M}{|\Omega|} \|w\|_1 - 1\ .
\end{equation*}
Combining the previous inequality with \eqref{d3} gives
\begin{equation*}
\mathcal{E}_0(u,w) \ge \frac{4\pi D - M}{8\pi} \|\nabla w\|_2^2 + \frac{\delta}{2} \|w\|_2^2 - b_{\ref{cstb1}}\ .
\end{equation*}
Consequently, 
\begin{equation*}
\mathcal{E}(u,v,w) \ge \frac{4\pi D - M}{8\pi} \|\nabla w\|_2^2 + \frac{\delta}{2} \|w\|_2^2 + \frac{\varepsilon}{2} \|-D\Delta w + \delta w - v\|_2^2 - b_{\ref{cstb1}}\ ,
\end{equation*}
and we have shown the expected lower bound. The upper bound is a straightforward consequence of Lemma~\ref{lemd2} and the non-negativity of $\mathcal{D}$.
\end{proof}

We are now in a position to deduce a first set of time-independent estimates, provided $M\in (0,4\pi D)$.

%%%%%%%%%%%%%%%%
\begin{lemma}\label{lemd4}
\refstepcounter{NumConstB}\label{cstb2} Assume that $M\in (0,4\pi D)$. There is $b_{\ref{cstb2}}>0$ such that, for $t\ge 0$,
\begin{equation*}
\|u(t)\ln{u(t)}\|_1 + \|w(t)\|_{W_2^1} + \|\partial_t w(t)\|_2 + \int_0^\infty \|\partial_t w(s)\|_{W_2^1}^2\ \mathrm{d}s \le b_{\ref{cstb2}}\ .
\end{equation*}
\end{lemma}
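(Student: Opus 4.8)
The assertion to prove is Lemma~\ref{lemd4}: assuming $M \in (0,4\pi D)$, the quantities $\|u\ln u\|_1$, $\|w\|_{W_2^1}$, $\|\partial_t w\|_2$ are bounded uniformly in $t\ge 0$, and $\partial_t w \in L^2((0,\infty);W_2^1(\Omega))$. The engine is the Liapunov structure from Lemma~\ref{lemd2} together with the coercivity lower bound \eqref{d7} from Lemma~\ref{lemd3}. I would proceed as follows.

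First, integrate the identity of Lemma~\ref{lemd2} in time to get $\mathcal{E}(u(t),v(t),w(t)) + \int_0^t \mathcal{D}(u(s),v(s),w(s))\,\mathrm ds = \mathcal{E}(u^{in},v^{in},w^{in})$ for all $t\ge 0$; in particular $\mathcal{E}(u(t),v(t),w(t))\le \mathcal{E}(u^{in},v^{in},w^{in})$. Combining this upper bound with the lower bound \eqref{d7} and using that $4\pi D - M > 0$ by hypothesis, I immediately read off
\begin{equation*}
\frac{4\pi D - M}{8\pi}\|\nabla w(t)\|_2^2 + \frac{\delta}{2}\|w(t)\|_2^2 + \frac{\varepsilon}{2}\|-D\Delta w(t) + \delta w(t) - v(t)\|_2^2 \le \mathcal{E}(u^{in},v^{in},w^{in}) + b_{\ref{cstb1}} =: b\ ,
\end{equation*}
which gives the uniform bound on $\|w(t)\|_{W_2^1}$ at once, and also a uniform bound on $\|{-}D\Delta w(t) + \delta w(t) - v(t)\|_2$. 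By \eqref{a1c} this last quantity equals $\nu\|\partial_t w(t)\|_2$, so the bound on $\|\partial_t w(t)\|_2$ follows as well.

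Next, for $\|u\ln u\|_1$: note $\int_\Omega u\ln u\,\mathrm dx = \|L(u)\|_1 + M - |\Omega|$ (using \eqref{a4} and \eqref{d1}), so it suffices to bound $\|L(u)\|_1$, and since $L\ge 0$ this is $\int_\Omega L(u)\,\mathrm dx$. From the definition \eqref{a3a} and the fact that $\mathcal{E}_0 \le \mathcal{E} \le \mathcal{E}(u^{in},v^{in},w^{in})$ (the first inequality because the extra term in \eqref{a3b} is nonnegative), I get $\int_\Omega L(u)\,\mathrm dx \le \mathcal{E}(u^{in},v^{in},w^{in}) + \int_\Omega uw\,\mathrm dx$. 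The cross term $\int_\Omega uw\,\mathrm dx$ is controlled using the already-established $W_2^1$-bound on $w$ together with the finiteness of $\|u\ln u\|_1$ — but this is circular, so instead I estimate $\int_\Omega uw\,\mathrm dx$ via the Young-type inequality $uw \le u\ln u - u + e^w$ (valid pointwise since $ab \le a\ln a - a + e^b$), whence $\int_\Omega uw\,\mathrm dx \le \|L(u)\|_1 - |\Omega| + \|e^w\|_1$. Feeding this back gives $\int_\Omega L(u)\,\mathrm dx$ on both sides and the naive constant is $1$, which is not quite enough. The clean fix, and the one the paper's machinery supports, is to go back to \eqref{d7a}: that intermediate bound in the proof of Lemma~\ref{lemd3} already isolates $\mathcal{E}_0(u,w) \ge \tfrac{D}{2}\|\nabla w\|_2^2 + \tfrac{\delta}{2}\|w\|_2^2 - M\ln(\|e^w\|_1) + M\ln M - M + |\Omega|$; but what I actually want is a lower bound on $\mathcal{E}_0$ that still retains a positive multiple of $\|L(u)\|_1$. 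I would re-derive such a bound by repeating the argument of \cite[Lemma~4.5]{GZ1998} but keeping, say, half of the $\int_\Omega L(u)$ term rather than discarding all of it: split $\mathcal{E}_0(u,w) = \tfrac12\int_\Omega L(u)\,\mathrm dx + \big[\tfrac12\int_\Omega L(u)\,\mathrm dx - \int_\Omega uw\,\mathrm dx + \tfrac{D}{2}\|\nabla w\|_2^2 + \tfrac{\delta}{2}\|w\|_2^2\big]$ and apply the Moser–Trudinger estimate (Proposition~\ref{propd2}) to the bracketed part after a Jensen/convexity step, using $\|u\|_1 = M < 4\pi D$ so that the coefficient of $\|\nabla w\|_2^2$ stays nonnegative. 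This yields $\tfrac12\|L(u(t))\|_1 \le \mathcal{E}(u^{in},v^{in},w^{in}) + b_{\ref{cstb1}} + C(M,\|w(t)\|_1)$, and then the uniform $L^1$-bound \eqref{d3} on $w$ closes it.

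Finally, for the space-time bound $\int_0^\infty\|\partial_t w(s)\|_{W_2^1}^2\,\mathrm ds < \infty$: from the time-integrated Liapunov identity, $\int_0^\infty \mathcal{D}(u(s),v(s),w(s))\,\mathrm ds \le \mathcal{E}(u^{in},v^{in},w^{in}) + b_{\ref{cstb1}}$ (the lower bound \eqref{d7} guarantees $\mathcal{E}(u(t),\dots)$ is bounded below, so the integral of the nonnegative dissipation converges). Inspecting \eqref{d5}, $\mathcal{D}$ dominates $\tfrac{1+\delta\varepsilon}{\nu}\|{-}D\Delta w + \delta w - v\|_2^2 + \tfrac{\varepsilon D}{\nu}\|\nabla({-}D\Delta w + \delta w - v)\|_2^2$; by \eqref{a1c} the argument ${-}D\Delta w + \delta w - v$ is exactly $\nu\partial_t w$, so $\mathcal{D} \ge c\,\|\partial_t w\|_{W_2^1}^2$ for some positive constant $c$ depending only on the parameters, and the claim follows.

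\textbf{The main obstacle.} Everything is soft once the right coercivity bound is in place; the only delicate point is getting a lower bound for $\mathcal{E}_0(u,w)$ that simultaneously (i) retains a coercive multiple of $\|L(u)\|_1$ and (ii) still has a nonnegative coefficient in front of $\|\nabla w\|_2^2$, which forces the strict inequality $M<4\pi D$ to be used with a little room to spare. I expect the bookkeeping in that modified version of the Ghoussoub–Gui / Nagai–Senba–Yoshida estimate to be the one step requiring genuine care; the rest — invoking Lemma~\ref{lemd2}, Lemma~\ref{lemd3}, \eqref{a1c}, and Lemma~\ref{lemd1} — is routine.
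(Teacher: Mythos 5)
Your first and third steps coincide with the paper's proof: combine the monotonicity of $\mathcal{E}$ (Lemma~\ref{lemd2}) with the coercivity bound \eqref{d7}, identify $-D\Delta w+\delta w-v=-\nu\partial_t w$ via \eqref{a1c} to read off the bounds on $\|w\|_{W_2^1}$ and $\|\partial_t w\|_2$, and then bound $\int_0^\infty\|\partial_t w\|_{W_2^1}^2\,\mathrm{d}s$ by the time-integrated dissipation, whose structure \eqref{d5} dominates $c\|\partial_t w\|_{W_2^1}^2$. That part is fine and is exactly what the paper does.

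For the $L\ln L$ bound you diverge, and this is where your argument is incomplete as written. You correctly diagnose that the naive Young inequality $uw\le L(u)+e^w-1$ produces the coefficient $1$ in front of $\|L(u)\|_1$, which cannot be absorbed. Your proposed fix --- re-running the Ghoussoub--Gui / Nagai--Senba--Yoshida lower bound while retaining ``say, half'' of $\int_\Omega L(u)\,\mathrm{d}x$ --- does not work with the fraction one half: minimizing $(1-\sigma)\int u\ln u-\int uw$ over $\|u\|_1=M$ gives a minimizer proportional to $e^{w/(1-\sigma)}$, and after Proposition~\ref{propd2} the coefficient of $\|\nabla w\|_2^2$ becomes $\tfrac{D}{2}-\tfrac{M}{8\pi(1-\sigma)}$, which for $\sigma=\tfrac12$ is nonnegative only when $M\le 2\pi D$. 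The route can be repaired by taking $\sigma<1-M/(4\pi D)$ (possible precisely because $M<4\pi D$), but this is the ``bookkeeping requiring genuine care'' that you postponed, and the retained fraction must degenerate as $M\uparrow 4\pi D$. The paper avoids all of this with a much lighter trick: since $\|\nabla w(t)\|_2$ and $\|w(t)\|_1$ are \emph{already} bounded from the first step, write $uw=\tfrac12\,u\cdot(2w)\le\tfrac12\bigl[L(u)+L^*(2w)\bigr]$ with $L^*(z)=e^z-1$, absorb $\tfrac12\|L(u)\|_1$ (finite, as $u(t)\in L_\infty(\Omega)$), and bound $\int_\Omega e^{2w}\,\mathrm{d}x$ by Proposition~\ref{propd2} using \eqref{d8} and \eqref{d3}; no smallness of $M$ is needed at this stage because there is no competing $\|\nabla w\|_2^2$ term to dominate. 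Finally, note that your identity $\int_\Omega u\ln u\,\mathrm{d}x=\|L(u)\|_1+M-|\Omega|$ controls $\int u\ln u$ but not $\|u\ln u\|_1$; the paper passes from $\|L(u)\|_1$ to $\|u\ln u\|_1$ via the elementary inequality $z|\ln z|\le L(z)+z+1$, a small repair you should include.
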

%%%%%%%%%%%%%%%%

\begin{proof}
Let $t\ge 0$. It readily follows from \eqref{a1c}, \eqref{d6}, and \eqref{d7} that
\begin{equation*}
\min{\left\{ \frac{4\pi D - M}{8\pi} , \frac{\delta}{2} , \frac{\nu^2 \varepsilon}{2} \right\}} \left[ \|w(t)\|_{W_2^1}^2 + \|\partial_t w(t)\|_2^2 \right] \le b_{\ref{cstb1}} + \mathcal{E}(u^{in},v^{in},w^{in})\ .
\end{equation*}
Hence, since $M\in (0,4\pi D)$,
\begin{equation}
 \|w(t)\|_{W_2^1} + \|\partial_t w(t)\|_2 \le b\ , \qquad t\ge 0\ . \label{d8}
\end{equation}
Next, thanks to the non-negativity and convexity of the function $L$ defined in \eqref{a4}, it follows from  \eqref{a3}, \eqref{d6}, and Young's inequality that
\begin{align*}
\| L(u(t))\|_1 & = \int_\Omega L(u(t,x))\ \mathrm{d}x \le \mathcal{E}_0(u(t),w(t)) + \int_\Omega u(t,x) w(t,x)\ \mathrm{d}x \\
& \le \mathcal{E}(u(t),v(t),w(t)) + \frac{1}{2} \int_\Omega \left[ L(u(t,x)) + L^*(2w(t,x)) \right]\ \mathrm{d}x \\
& \le \mathcal{E}(u^{in},v^{in},w^{in}) + \frac{1}{2} \|L(u(t))\|_1 + \frac{1}{2} \int_\Omega L^*(2w(t,x))\ \mathrm{d}x\ ,
\end{align*}
where $L^*$ is the convex conjugate of $L$; that is, $L^*(z) := e^z -1$, $z\in\mathbb{R}$. Since
\begin{equation*}
\int_\Omega L^*(2w(t,x))\ \mathrm{d}x \le \int_\Omega e^{2w(t,x)}\ \mathrm{d}x \le K_0 \exp{\left( \frac{\|\nabla w(t)\|_2^2}{2\pi} + \frac{2}{|\Omega|} \|w(t)\|_1 \right)} \le b
\end{equation*}
by \eqref{d3}, \eqref{d8}, and Proposition~\ref{propd2}, we end up with 
\begin{equation}
\|L(u(t))\|_1 \le 2 \mathcal{E}(u^{in},v^{in},w^{in}) + b\ , \qquad t\ge 0\ . \label{d9}
\end{equation}
An immediate consequence of \eqref{d9} and the elementary inequality $z|\ln{z}|\le L(z) + z + 1$ for $z\ge 0$ is an $L\ln{L}$-estimate for $u$:
\begin{equation}
\|u(t)\ln{u(t)}\|_1 \le b\ , \qquad t\ge 0\ . \label{d10}
\end{equation}
We finally infer from \eqref{a1c}, \eqref{d6}, \eqref{d7}, and Lemma~\ref{lemd2} that
\begin{align*}
& \nu \min{\left\{ (1+\delta\varepsilon) , \varepsilon D  \right\}} \int_0^t \|\partial_t w(s)\|_{W_2^1}^2\ \mathrm{d}s \\
& \qquad \le \int_0^t \left( \nu(1+\delta\varepsilon) \|\partial_t w(s)\|_2^2 + \nu\varepsilon D \|\nabla \partial_t w(s)\|_2^2 \right)\ \mathrm{d}s \\ 
& \qquad \le \int_0^t \mathcal{D}(u(s),v(s),w(s))\ \mathrm{d}s \\
& \qquad \le \mathcal{E}(u^{in},v^{in},w^{in}) - \mathcal{E}(u(t),v(t),w(t)) \\
& \qquad \le \mathcal{E}(u^{in},v^{in},w^{in}) + b_{\ref{cstb1}}\ ,
\end{align*}
which, together with \eqref{d8} and \eqref{d10}, complete the proof of Lemma~\ref{lemd4}.
\end{proof}

We next derive $L_r$-estimates for $(u,v)$, the starting point being the just proved $L\ln{L}$-estimate on $u$ and the following fundamental inequality established in \cite[Equation~(22)]{BHN1994}: given $\eta>0$, there is a positive constant $\kappa_\eta>0$ depending only on $\eta$ and $\Omega$ such that
\begin{equation}
\|z\|_3^3 \le \eta \|z\|_{W_2^1}^2 \|z\ln{|z|}\|_1 + \kappa_\eta \|z\|_1\ , \qquad z\in W_2^1(\Omega)\ . \label{d11}
\end{equation}

%%%%%%%%%%%%%%%%
\begin{lemma}\label{lemd5}
\refstepcounter{NumConstB}\label{cstb3} Assume that $M\in (0,4\pi D)$. There is $b_{\ref{cstb3}}>0$ such that, for $t\ge 0$,
\begin{equation*}
\|u(t)\|_2 + \|v(t)\|_3 \le b_{\ref{cstb3}}\ .
\end{equation*}
\end{lemma}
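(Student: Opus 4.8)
The plan is to obtain the time-independent bounds on $\|u(t)\|_2$ and $\|v(t)\|_3$ \emph{simultaneously}, by closing a single differential inequality for a combined functional. First I would write down the $L_2$-energy identity for $u$: multiplying \eqref{a1a} by $u$ and integrating by parts with the help of \eqref{a1d} gives $\frac12\frac{\mathrm{d}}{\mathrm{d}t}\|u\|_2^2 + \|\nabla u\|_2^2 = -\frac12\int_\Omega u^2\Delta w\ \mathrm{d}x$, and I would then eliminate $\Delta w$ by means of \eqref{a1c}, namely $D\Delta w = \nu\partial_t w + \delta w - v$. Since $u$ and $w$ are non-negative, the resulting term $-\frac{\delta}{2D}\int_\Omega u^2 w\ \mathrm{d}x$ has a favourable sign and may be discarded, so that
\[
\frac12\frac{\mathrm{d}}{\mathrm{d}t}\|u\|_2^2 + \|\nabla u\|_2^2 \le \frac1{2D}\int_\Omega u^2 v\ \mathrm{d}x - \frac{\nu}{2D}\int_\Omega u^2\partial_t w\ \mathrm{d}x\ .
\]
The last term is harmless: it is bounded by $\frac{\nu}{2D}\|u\|_4^2\|\partial_t w\|_2 \le C\|u\|_4^2$ owing to the bound on $\|\partial_t w\|_2$ supplied by Lemma~\ref{lemd4}, and a Gagliardo--Nirenberg inequality together with Young's inequality, Poincaré's inequality, and the conservation of mass $\|u\|_1 = M$ reduces $C\|u\|_4^2$ to $\kappa\|\nabla u\|_2^2 + C_\kappa$ with $\kappa$ as small as we wish. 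For the genuinely dangerous term I would use Hölder's and Young's inequalities in the form $\int_\Omega u^2 v\ \mathrm{d}x \le \|u\|_3^2\|v\|_3 \le \frac23\|u\|_3^3 + \frac13\|v\|_3^3$.

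Next I would derive a companion inequality for $v$: multiplying \eqref{a1b} by $v^2$ (legitimate since $v\ge 0$) and integrating over $\Omega$ yields $\frac{\nu\varepsilon}{3}\frac{\mathrm{d}}{\mathrm{d}t}\|v\|_3^3 + \|v\|_3^3 = \int_\Omega u v^2\ \mathrm{d}x \le \|u\|_3\|v\|_3^2 \le \frac13\|u\|_3^3 + \frac23\|v\|_3^3$, whence $\frac{\nu\varepsilon}{3}\frac{\mathrm{d}}{\mathrm{d}t}\|v\|_3^3 + \frac13\|v\|_3^3 \le \frac13\|u\|_3^3$. It thus remains to dominate $\|u\|_3^3$, and this is where the fundamental inequality \eqref{d11} enters decisively: applied to $z = u$ it gives $\|u\|_3^3 \le \eta\|u\|_{W_2^1}^2\|u\ln u\|_1 + \kappa_\eta M$, and combined with the $L\ln L$-estimate $\|u\ln u\|_1 \le b_{\ref{cstb2}}$ from Lemma~\ref{lemd4} and the Poincaré--Wirtinger inequality (which controls $\|u\|_2$ by $\|\nabla u\|_2$ since $\|u\|_1 = M$ is fixed), this becomes $\|u\|_3^3 \le \eta'\|\nabla u\|_2^2 + C_{\eta'}$ with $\eta'$ arbitrarily small. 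Observe that the hypothesis $M\in (0,4\pi D)$ is used here only through its role in Lemma~\ref{lemd4}.

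Finally I would combine the two estimates. Setting $H := \|u\|_2^2 + \mu\|v\|_3^3$ with $\mu := \nu\varepsilon/(2D)$, inserting the bounds above, choosing $\eta'$ (hence the coefficient of $\|\nabla u\|_2^2$ generated by $\|u\|_3^3$) small relative to $\mu$, and invoking the Poincaré--Wirtinger inequality once more to convert part of the dissipation $-\|\nabla u\|_2^2$ into a coercive term $-\gamma_0\|u\|_2^2$, I expect to reach a differential inequality of the form $\frac{\mathrm{d}}{\mathrm{d}t}H \le -2\gamma_0 H + C$ for some $\gamma_0>0$. Gronwall's lemma then gives $H(t) \le H(0) + C/\gamma_0$ for all $t\ge 0$, which yields the claimed time-independent bounds on $\|u(t)\|_2$ and $\|v(t)\|_3$. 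I expect the main difficulty to lie precisely in this last bookkeeping of constants: the coupling term $\int_\Omega u^2 v\ \mathrm{d}x$ is cubic and a priori uncontrollable, and it is only the interplay between the smallness extracted from \eqref{d11} (which rests on $M<4\pi D$ via Lemma~\ref{lemd4}) and the specific choice $\mu\sim \nu\varepsilon/D$ that makes the $\|v\|_3^3$-contributions combine with the correct sign; in particular the positivity of $\varepsilon$ is crucial. As usual, these differential inequalities are first justified on $(0,\infty)$ thanks to the regularity asserted in Theorem~\ref{thma1} and Corollary~\ref{corb2}, and then extended up to $t=0$ by continuity.
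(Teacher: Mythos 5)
Your proposal is correct and follows essentially the same strategy as the paper: the $L_2$-identity for $u$ with $\Delta w$ eliminated through \eqref{a1c} and the sign of the $-u^2w$-term exploited, the $L_3$-identity for $v$, the inequality \eqref{d11} combined with the $L\ln L$-bound of Lemma~\ref{lemd4} (the only place where $M<4\pi D$ enters), Poincar\'e's inequality \eqref{d14}, and a Gronwall argument for a combined functional $\|u\|_2^2+\mathrm{const.}\,\nu\varepsilon\|v\|_3^3$. The one genuine difference is your treatment of the $\partial_t w$-term: the paper keeps $b\|\partial_t w\|_2\|u\|_2^2$ on the right-hand side and closes the estimate with an integrating factor, using the time-integrability $\int_0^\infty\|\partial_t w(s)\|_{W_2^1}^2\,\mathrm{d}s\le b_{\ref{cstb2}}$ from Lemma~\ref{lemd4} to control the exponential weight (see \eqref{d15}--\eqref{d16}), whereas you use only the pointwise bound $\|\partial_t w(t)\|_2\le b_{\ref{cstb2}}$ and absorb $C\|u\|_4^2$ into the dissipation, arriving at the cleaner autonomous inequality $H'\le-2\gamma_0H+C$. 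This works, with one caveat to make precise: the smallness $C\|u\|_4^2\le\kappa\|\nabla u\|_2^2+C_\kappa$ cannot come from \eqref{c6} and Poincar\'e alone, since Poincar\'e converts the leftover $\|u\|_2^2$ into an $O(1)$ (not small) multiple of $\|\nabla u\|_2^2$; you must interpolate against the conserved mass, e.g. via $\|u\|_2\le M^{1/3}\|u\|_4^{2/3}$ or a Gagliardo--Nirenberg inequality with the $L_1$-norm, which gives $\|u\|_4^2\le C(M)\left(\|\nabla u\|_2^{3/2}+1\right)$, the subquadratic power $3/2$ then allowing Young's inequality to produce an arbitrarily small $\kappa$ no matter how large the prefactor $\nu b_{\ref{cstb2}}/2D$ is. Since you do invoke the conservation of mass, this is a matter of stating the step correctly rather than a gap; with it, your bookkeeping (choice of $\mu\sim\nu\varepsilon/D$, smallness extracted from \eqref{d11}, Poincar\'e, comparison of the $\|v\|_3^3$-coefficients) closes exactly as claimed, and it even spares the non-autonomous Gronwall step of the paper.
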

%%%%%%%%%%%%%%%%

\begin{proof}
On the one hand, it follows from \eqref{a1a}, \eqref{a1d}, H\"older's inequality, and the non-negativity of $u$ and $w$ that 
\begin{align*}
\frac{1}{2} \frac{\mathrm{d}}{\mathrm{d}t} \|u\|_2^2 & =- \|\nabla u\|_2^2 - \frac{1}{2} \int_\Omega u^2 \Delta w\ \mathrm{d}x \\
& = - \|\nabla u\|_2^2 + \frac{1}{2D} \int_\Omega u^2 \left( v - \delta w - \nu \partial_t w \right)\ \mathrm{d}x \\
& \le - \|\nabla u\|_2^2 + \frac{1}{2D} \|v\|_3 \|u\|_3^2 + \frac{\nu}{2D} \|\partial_t w\|_2 \|u\|_4^2\ .
\end{align*}
Using the Gagliardo-Nirenberg inequality \eqref{c6} and Young's inequality, we further obtain
\begin{align*}
\frac{\mathrm{d}}{\mathrm{d}t} \|u\|_2^2 + 2\|\nabla u\|_2^2 & \le \frac{1}{D} \|v\|_3 \|u\|_3^2 + \frac{\nu }{D} \|\partial_t w\|_2 \|u\|_4^2 \nonumber \\
& \le \frac{1}{3} \|v\|_3^3 + \frac{2}{3 D^{3/2}} \|u\|_3^3 + \frac{2 \nu c_0^2}{D} \|\partial_t w\|_2 \left( \|\nabla u\|_2 \|u\|_2 + \|u\|_2^2 \right) \nonumber \\
& \le \frac{1}{3} \|v\|_3^3 + \frac{2}{3 D^{3/2}} \|u\|_3^3 + \frac{1}{2} \|\nabla u\|_2^2 + b \left( \|\partial_t w\|_2 + \|\partial_t w\|_2^2 \right) \|u\|_2^2\ . 
\end{align*}
Hence, thanks to Lemma~\ref{lemd4},
\begin{equation}
\frac{\mathrm{d}}{\mathrm{d}t} \|u\|_2^2 + \frac{3}{2} \|\nabla u\|_2^2 \le  \frac{1}{3} \|v\|_3^3 + \frac{2}{3 D^{3/2}} \|u\|_3^3 + b \|\partial_t w\|_2 \|u\|_2^2\ . \label{d12}
\end{equation}
On the other hand, we infer from \eqref{a1b} and H\"older's and Young's inequalities that
\begin{equation*}
\frac{\nu\varepsilon}{3} \frac{\mathrm{d}}{\mathrm{d}t} \|v\|_3^3 + \|v\|_3^3 \le \|u\|_3 \|v\|_3^2 \le \frac{1}{3} \|u\|_3^3 + \frac{2}{3} \|v\|_3^3\ .
\end{equation*}
Hence,
\begin{equation}
\nu\varepsilon \frac{\mathrm{d}}{\mathrm{d}t} \|v\|_3^3 + \|v\|_3^3 \le  \|u\|_3^3 \ . \label{d13}
\end{equation}
Introducing $Y:= \|u\|_2^2 + \nu\varepsilon \|v\|_3^3$, we sum up \eqref{d12} and \eqref{d13} and use \eqref{d1},  \eqref{d11}, and Lemma~\ref{lemd4} to obtain, for $\eta>0$,
\begin{align*}
\frac{\mathrm{d}Y}{\mathrm{d}t} + \frac{3}{2} \|\nabla u\|_2^2 + \frac{2}{3} \|v\|_3^3 & \le \left( 1 + \frac{2}{3 D^{3/2}} \right) \|u\|_3^3 + b \|\partial_t w\|_2 \|u\|_2^2 \\
& \le \left( 1 + \frac{2}{3 D^{3/2}} \right) \left( \eta b_{\ref{cstb2}} \|u\|_{W_2^1}^2 + \kappa_\eta M \right) + b \|\partial_t w\|_2 \|u\|_2^2 \\
& \le b \left( \eta \|\nabla u\|_{2}^2 + \eta \|u\|_2^2 + \|\partial_t w\|_2 \|u\|_2^2 \right)\ .
\end{align*}
By Poincar\'e's inequality, there is $K_1>0$ depending only on $\Omega$ such that
\begin{equation*}
\left\| z - \frac{1}{|\Omega|} \int_\Omega z(x)\ \mathrm{d}x \right\|_2 \le K_1 \|\nabla z\|_2\ , \qquad z\in W_2^1(\Omega)\ ,
\end{equation*}
which implies that
\begin{equation}
\|z\|_2^2 \le 2 K_1^2 \|\nabla z\|_2^2 + \frac{2}{|\Omega|} \|z\|_1^2\ , \qquad z\in W_2^1(\Omega)\ . \label{d14}
\end{equation}
Consequently, it follows from \eqref{d1} and \eqref{d14} that
\begin{equation*}
\frac{\mathrm{d}Y}{\mathrm{d}t} + \frac{3}{2} \|\nabla u\|_2^2 + \frac{2}{3} \|v\|_3^3 \le b \left( \eta \|\nabla u\|_{2}^2 + 2\eta K_1^2 \|\nabla u\|_2^2 + 2\eta \frac{M^2}{|\Omega|} + \|\partial_t w\|_2 \|u\|_2^2 \right)\ .
\end{equation*}
Choosing $\eta = 1/[2b(1+2K_1^{2})]$, we end up with 
\begin{equation*}
\frac{\mathrm{d}Y}{\mathrm{d}t} + \|\nabla u\|_2^2 + \frac{2}{3} \|v\|_3^3 \le b \left( 1 + \|\partial_t w\|_2 \|u\|_2^2 \right)\ .
\end{equation*}
Using \eqref{d1} and \eqref{d14} again, we deduce that
\begin{equation*}
\frac{\mathrm{d}Y}{\mathrm{d}t} + \frac{1}{2K_1^2} \left( \|u\|_2^2 - \frac{2M^2}{|\Omega|} \right) + \frac{2}{3} \|v\|_3^3 \le b \left( 1 + \|\partial_t w\|_2 \|u\|_2^2 \right)\ .
\end{equation*}
\refstepcounter{NumConstB}\label{cstb4} \refstepcounter{NumConstB}\label{cstb5} Therefore, there are $b_{\ref{cstb4}}>0$ and $b_{\ref{cstb5}}>0$ such that
\begin{equation*}
\frac{\mathrm{d}Y}{\mathrm{d}t} + b_{\ref{cstb4}} Y \le b_{\ref{cstb5}} \left( 1 + \|\partial_t w\|_2 \|u\|_2^2 \right) \le b_{\ref{cstb5}} \left( 1 + \|\partial_t w\|_2 Y \right)\ . 
\end{equation*}
Equivalently,
\begin{equation*}
\frac{\mathrm{d}}{\mathrm{d}t} \left[ Y(t) \exp{\left( b_{\ref{cstb4}} t - b_{\ref{cstb5}} \int_0^t \|\partial_t w(s)\|_2\ \mathrm{d}s \right)} \right] \le b_{\ref{cstb5}} \exp{\left( b_{\ref{cstb4}} t - b_{\ref{cstb5}} \int_0^t \|\partial_t w(s)\|_2\ \mathrm{d}s \right)}\ ,
\end{equation*}
and, after integration with respect to time,
\begin{align}
Y(t) & \le Y(0) \exp{\left( b_{\ref{cstb5}} \int_0^t \|\partial_t w(s)\|_2\ \mathrm{d}s - b_{\ref{cstb4}} t \right)} \nonumber\\
& \qquad + b_{\ref{cstb5}} \int_0^t \exp{\left( b_{\ref{cstb5}} \int_s^t \|\partial_t w(s_*)\|_2\ \mathrm{d}s_* - b_{\ref{cstb4}} (t-s) \right)}\ \mathrm{d}s \ . \label{d15}
\end{align}
Owing to Lemma~\ref{lemd4}, we infer from Young's inequality that, for $0\le s\le t$, 
\begin{align}
b_{\ref{cstb5}} \int_s^t \|\partial_t w(s_*)\|_2\ \mathrm{d}s_* - b_{\ref{cstb4}} (t-s) & \le \int_s^t \left( \frac{b_{\ref{cstb4}}}{2} + \frac{b_{\ref{cstb5}}^2}{2 b_{\ref{cstb4}}} \|\partial_t w(s_*)\|_2^2 \right)\ \mathrm{d}s_* - b_{\ref{cstb4}} (t-s) \nonumber \\
& \le \frac{b_{\ref{cstb2}} b_{\ref{cstb5}}^2}{2 b_{\ref{cstb4}}} - \frac{b_{\ref{cstb4}}}{2} (t-s) \ . \label{d16}
\end{align}
Combining \eqref{d15} and \eqref{d16} gives
\begin{equation*}
Y(t) \le b e^{-b_{\ref{cstb4}}t/2} + b \int_0^t e^{-b_{\ref{cstb4}}(t-s)/2}\ \mathrm{d}s \le b\ ,
\end{equation*}
and completes the proof.
\end{proof}

Owing to Lemma~\ref{lemd5}, we may argue as in the proof of Corollary~\ref{corc5}, using in addition the exponential decay due to the positivity of $\delta$, to obtain a Lipschitz estimate on $w$.

%%%%%%%%%%%%%%%%
\begin{corollary}\label{cord6}
\refstepcounter{NumConstB}\label{cstb6} Assume that $M\in (0,4\pi D)$. There is $b_{\ref{cstb6}}>0$ such that, for $t\ge 0$,
\begin{equation*}
\|w(t)\|_{W_\infty^1} \le b_{\ref{cstb6}}\ .
\end{equation*}
\end{corollary}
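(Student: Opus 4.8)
The plan is to repeat the argument used in the proof of Corollary~\ref{corc5}, performing two changes: the time-dependent bound on $\|v\|_3$ coming from Lemma~\ref{lemc4} is replaced by the time-\emph{independent} bound provided by Lemma~\ref{lemd5} (which applies here thanks to the assumption $M\in (0,4\pi D)$), and the exponential decay of the analytic semigroup generated by $-D\Delta + \delta\,\mathrm{id}$ is kept throughout the computation --- this decay being available precisely because $\delta>0$.

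Concretely, I would first recall from Lemma~\ref{lemd5} that $\|v(t)\|_3\le b_{\ref{cstb3}}$ for all $t\ge 0$. Next, writing the variation-of-constants formula associated with \eqref{a1c},
\begin{equation*}
w(t) = e^{(t/\nu)(D\Delta - \delta)}w^{in} + \frac{1}{\nu}\int_0^t e^{((t-s)/\nu)(D\Delta - \delta)}v(s)\ \mathrm{d}s\ , \qquad t\ge 0\ ,
\end{equation*}
where the Laplacian is endowed with homogeneous Neumann boundary conditions, I would use the continuous embedding $W_3^{11/6}(\Omega)\hookrightarrow W_\infty^1(\Omega)$ from \cite[Theorem~7.1.2]{Pa1983}, the identification $\big( L_3(\Omega),W_{3,\mathcal{B}}^2(\Omega)\big)_{11/12}\stackrel{\cdot}{=}W_{3,\mathcal{B}}^{11/6}(\Omega)$ from \cite[Theorem~7.2]{Am1993}, and the smoothing estimates for the semigroup together with its exponential decay at rate $\delta/\nu$, see \cite[Theorem~V.2.1.3]{Am1995}, to obtain, for $t\ge 0$,
\begin{align*}
\|w(t)\|_{W_\infty^1} & \le C\|w(t)\|_{W_3^{11/6}} \\
& \le C e^{-\delta t/(2\nu)}\|w^{in}\|_{W_3^2} + C\int_0^t e^{-\delta(t-s)/(2\nu)}(t-s)^{-11/12}\|v(s)\|_3\ \mathrm{d}s\ .
\end{align*}
Inserting the uniform bound $\|v(s)\|_3\le b_{\ref{cstb3}}$ and substituting $\sigma = t-s$ then gives
\begin{equation*}
\|w(t)\|_{W_\infty^1} \le C\|w^{in}\|_{W_3^2} + C b_{\ref{cstb3}}\int_0^\infty e^{-\delta\sigma/(2\nu)}\sigma^{-11/12}\ \mathrm{d}\sigma\ , \qquad t\ge 0\ ,
\end{equation*}
and the remaining integral is finite since $11/12<1$, its value depending only on $\delta$ and $\nu$. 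This yields the claimed time-independent bound $b_{\ref{cstb6}}$.

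I do not expect any serious obstacle here. The only points to watch are that the exponential factor $e^{-\delta(t-s)/(2\nu)}$ must be retained at every stage --- it is exactly what makes the Duhamel integral bounded uniformly with respect to $t$, whereas in Corollary~\ref{corc5} the analogous integral could only be bounded by a quantity growing with the final time $T$ --- and that one must invoke the time-uniform $L_3$-estimate on $v$ furnished by Lemma~\ref{lemd5} rather than the merely time-local $L_\infty$-estimate of Lemma~\ref{lemc4}.
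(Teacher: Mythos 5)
Your argument is correct and is exactly the paper's intended proof: the paper simply states that one repeats the Duhamel/semigroup computation of Corollary~\ref{corc5}, now retaining the exponential decay factor coming from $\delta>0$ and substituting the time-uniform bound $\|v(t)\|_3\le b_{\ref{cstb3}}$ of Lemma~\ref{lemd5} for the local-in-time estimate of Lemma~\ref{lemc4}. No gaps to report.
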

%%%%%%%%%%%%%%%%

\begin{proof}[Proof of Theorem~\ref{thma2}~(a)]
From Lemma~\ref{lemd5} and Corollary~\ref{cord6}, we deduce that 
\begin{equation*}
u\nabla w \in L_\infty((0,\infty);L_2(\Omega;\mathbb{R}^2))\ .
\end{equation*} 
Therefore, since $u^{in}\in L_\infty(\Omega)$, we are in a position to apply \cite[Lemma~A.1]{TW2012} to conclude that
\begin{equation*}
\|u(t)\|_\infty \le b\ , \qquad t\ge 0\ ,
\end{equation*}
see also \cite[Section~4]{NSY1997}. Combining this last estimate with \eqref{a1b} entails that $v\in L_\infty((0,\infty)\times\Omega)$ and completes the proof.
\end{proof}

\begin{proof}[Proof of Theorem~\ref{thma2}~(b)]
Since $w$ is radially symmetric, it satisfies an improved version of Proposition~\ref{propd2}: for all $\eta>0$, there is $K(\eta)>0$ depending only on $\eta$ such that
\begin{equation*}
\int_\Omega e^w \ \mathrm{d}x \le K(\eta) \exp{\left[ \left( \eta+ \frac{1}{16\pi} \right) \|\nabla w\|_2^2 + \frac{2}{|\Omega|} \|w\|_1 \right] }\ ,
\end{equation*}
see \cite[Theorem~2.1]{NSY1997}. Consequently, 
\begin{equation*}
- M \ln{\| e^w \|_1} \ge - M \ln{K(\eta)} - M \left( \eta+ \frac{1}{16\pi} \right) \|\nabla w\|_2^2 - \frac{2M}{|\Omega|} \|w\|_1\ ,
\end{equation*}
and it follows from \eqref{d3} and \eqref{d7a} that
\begin{equation*}
\mathcal{E}_0(u,w) \ge \frac{8\pi D - (1+16\pi\eta)M}{16\pi} \|\nabla w\|_2^2 + \frac{\delta}{2} \|w\|_2^2 - b - M \ln{K(\eta)} \ .
\end{equation*}
Choosing $\eta = (8\pi D - M)/(32\pi M)>0$ in the previous inequality gives
\begin{equation*}
\mathcal{E}_0(u,w) \ge \frac{8\pi D - M}{32\pi} \|\nabla w\|_2^2 + \frac{\delta}{2} \|w\|_2^2 - b \ .
\end{equation*}
We then proceed as in Section~\ref{s3.2} to complete the proof.
\end{proof}

%%%%%%%%%%%%%%%%
%%%%%%%%%%%%%%%%
\subsection{Stabilization}\label{s3.3}
%%%%%%%%%%%%%%%%
%%%%%%%%%%%%%%%%

Another useful consequence of the availability of a Liapunov functional is the stabilization of bounded solutions; that is, any cluster point as $t\to\infty$ of a bounded solution to \eqref{a1}-\eqref{a2} is a stationary solution to \eqref{a1}. Not only does this information provide better insight into the dynamics of solutions to \eqref{a1}-\eqref{a2} when $\|u^{in}\|_1<4\pi D$, since solutions are known to be bounded in that case by Theorem~\ref{thma2}, but it is also an important step in the construction of unbounded solutions to be performed in Section~\ref{s4}. This fact has already been observed in \cite{GZ1998} and used for similar purposes in \cite{Ho2001, Ho2002, HW2001}.

%%%%%%%%%%%%%%%%
\begin{proposition}
Let $\theta\in (5/6,1)$ and consider initial conditions $(u^{in},v^{in},w^{in})\in \mathcal{I}_{M,\theta}$. We denote the corresponding solution to \eqref{a1}-\eqref{a2} given by Theorem~\ref{thma1} by $(u,v,w)$ and assume further that there is $\Lambda>0$ such that
\begin{equation}
\|u(t)\|_\infty \le \Lambda\ , \qquad t\ge 0\ . \label{e1}
\end{equation}
There are a sequence $(t_k)_{k\ge 1}$, $t_k\to\infty$, of positive real numbers and non-negative functions $(u_*,v_*,w_*)\in L_\infty(\Omega;\mathbb{R}^2)\times W_{2,\mathcal{B}}^2(\Omega)$ such that 
\begin{align}
& \lim_{k\to\infty} \left[ \|u(t_k)-u_*\|_2 + \|v(t_k)-v_*\|_2 + \|w(t_k)-w_*\|_{W_2^1} \right] = 0\ , \label{e3b} \\
& \lim_{k\to \infty} \mathcal{E}_0(u(t_k),w(t_k)) = \mathcal{E}_0(u_*,w_*)\ , \label{e3a} \\
& \mathcal{E}(u_*,v_*,w_*) \le \liminf_{k\to\infty} \mathcal{E}(u(t_k),v(t_k),w(t_k))\ , \label{e3e} 
\end{align}
where
\begin{equation}
u_* = v_* = M \frac{e^{w_*}}{\|e^{w_*}\|_1} \label{e3c}\ ,
\end{equation}
and $w_*$ solves the (nonlocal) elliptic equation
\begin{equation}
-D \Delta w_* + \delta w_* = M \frac{e^{w_*}}{\|e^{w_*}\|_1} \;\;\text{ in }\;\; \Omega\ , \qquad \nabla w_*\cdot \mathbf{n} = 0 \;\;\text{ on }\;\; \partial\Omega\ . \label{e3d}
\end{equation}
In other words, $(u_*,v_*,w_*)$ is a stationary solution to \eqref{a1}. 
\end{proposition}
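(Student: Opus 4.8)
The plan is to build the statement from three ingredients: time-independent estimates and compactness of the trajectory, obtained by revisiting Section~\ref{s2.2} under the extra hypothesis \eqref{e1}; the dissipation identity of Lemma~\ref{lemd2}, which forces the cluster points to solve the stationary problem; and a separate ODE argument, based on \eqref{a1b}, to recover $u_*=v_*$.

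First I would redo the bootstrap of Section~\ref{s2.2} assuming \eqref{e1} and check that all constants can now be taken independent of $T$. Starting from $\|u(t)\|_\infty\le\Lambda$, \eqref{a1b} gives a uniform bound on $\|v(t)\|_\infty$; plugging it into \eqref{a1c} and using the analytic semigroup generated by $-D\Delta+\delta$ together with its exponential decay (as in Corollaries~\ref{corc5} and~\ref{cord6}) yields uniform bounds on $\|\Delta w(t)\|_2$ and $\|w(t)\|_{W_\infty^1}$; reinjecting these in \eqref{a1a}, and using in addition the exponential decay of the Neumann heat semigroup on functions of vanishing spatial mean (applicable since $\mathrm{div}(u\nabla w)$ has mean zero), provides uniform-in-time bounds on $u$ in $W_{3,\mathcal{B}}^{2\theta}(\Omega)$ and, for $t\ge 1$, on $u$ in $W_{3,\mathcal{B}}^2(\Omega)$ and on $\partial_t u$, $\partial_t v$, $\partial_t w$ in $W_3^1(\Omega)$. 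The H\"older-in-time estimates of Lemma~\ref{lemc8} then become uniform on $[1,\infty)$, and a standard compactness argument shows that $\{(u(t),v(t),w(t)):t\ge 1\}$ is relatively compact in $C(\overline{\Omega};\mathbb{R}^2)\times W_\infty^1(\Omega)$.

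Next, the Step~1 bounds make $\mathcal{E}_0(u(t),w(t))$ and $\frac{\varepsilon}{2}\|-D\Delta w(t)+\delta w(t)-v(t)\|_2^2$ bounded from below, so by Lemma~\ref{lemd2} the map $t\mapsto\mathcal{E}(u(t),v(t),w(t))$ is nonincreasing and bounded, hence converges to some $\mathcal{E}_\infty\in\mathbb{R}$ with $\int_0^\infty\mathcal{D}(u(s),v(s),w(s))\,\mathrm{d}s=\mathcal{E}(u^{in},v^{in},w^{in})-\mathcal{E}_\infty<\infty$; in particular each of the three nonnegative terms of \eqref{d5} lies in $L_1(0,\infty)$. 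Given any $s_j\to\infty$, I would pick $t_j\in[s_j,s_j+1]$ with $\mathcal{D}(u(t_j),v(t_j),w(t_j))\le\int_{s_j}^{s_j+1}\mathcal{D}(s)\,\mathrm{d}s\to 0$ and, by the compactness above, extract a subsequence $(t_k)_k$ with $(u(t_k),v(t_k),w(t_k))\to(u_*,v_*,w_*)$ in $C(\overline{\Omega};\mathbb{R}^2)\times W_\infty^1(\Omega)$, the limit being nonnegative with $\|u_*\|_1=M$; this gives \eqref{e3b}. Since $\mathcal{D}(u(t_k),v(t_k),w(t_k))\to 0$, the second term of \eqref{d5} forces $\|-D\Delta w(t_k)+\delta w(t_k)-v(t_k)\|_2\to 0$; as $w(t_k)\to w_*$ and $v(t_k)\to v_*$ in $L_2(\Omega)$, this implies $D\Delta w(t_k)\to\delta w_*-v_*$ in $L_2(\Omega)$, and comparing with the distributional limit $\Delta w(t_k)\to\Delta w_*$ gives $-D\Delta w_*+\delta w_*=v_*$, after which elliptic regularity yields $w_*\in W_{2,\mathcal{B}}^2(\Omega)$ and $w(t_k)\to w_*$ in $W_2^2(\Omega)$. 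The first term of \eqref{d5}, Cauchy--Schwarz, and \eqref{c1} give $\|\nabla u(t_k)-u(t_k)\nabla w(t_k)\|_1\le M^{1/2}\big(\int_\Omega u(t_k)|\nabla(\ln u(t_k)-w(t_k))|^2\,\mathrm{d}x\big)^{1/2}\to 0$; since $u$ is bounded in $W_{3,\mathcal{B}}^{2\theta}(\Omega)$, which embeds compactly in $W_3^1(\Omega)$, one has $\nabla u(t_k)\to\nabla u_*$, and passing to the limit yields $\nabla u_*=u_*\nabla w_*$, i.e.\ $\nabla(u_*e^{-w_*})=0$; integrating and using $\|u_*\|_1=M$ gives $u_*=Me^{w_*}/\|e^{w_*}\|_1$.

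The remaining identity $v_*=u_*$ is the step I expect to be the main obstacle, since $\mathcal{D}$ does not by itself couple $u$ and $v$. I would get around it by proving that $\|u(t)-v(t)\|_2\to 0$ as $t\to\infty$: using $\partial_t u=\mathrm{div}\big(u\nabla(\ln u-w)\big)$, integrating by parts with the help of \eqref{a1d}, and using \eqref{a1b}, one obtains
\[
\frac{\mathrm{d}}{\mathrm{d}t}\|u-v\|_2^2+\frac{2}{\nu\varepsilon}\|u-v\|_2^2=-2\int_\Omega u\,\nabla(u-v)\cdot\nabla(\ln u-w)\,\mathrm{d}x\le C\,\mathcal{D}_1(t)^{1/2},
\]
where $\mathcal{D}_1$ is the first term of \eqref{d5} and the bound uses $\|u-v\|_{W_2^1}\le C$ (Step~1) together with $\|u\,\nabla(\ln u-w)\|_2^2\le\Lambda\,\mathcal{D}_1(t)$; since $\mathcal{D}_1^{1/2}\in L_2(0,\infty)$, Gronwall's lemma and the fact that the convolution of $s\mapsto e^{-2s/\nu\varepsilon}$ with an $L_2(0,\infty)$ function tends to $0$ at infinity give $\|u(t)-v(t)\|_2\to 0$, whence $v_*=u_*$; this completes \eqref{e3c}, and \eqref{e3d} follows from $-D\Delta w_*+\delta w_*=v_*$. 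Finally, for the energies, each term of $\mathcal{E}_0(u(t_k),w(t_k))$ converges ($\|L(u(t_k))\|_1\to\|L(u_*)\|_1$ by dominated convergence since $\|u(t_k)\|_\infty\le\Lambda$, $\int_\Omega u(t_k)w(t_k)\,\mathrm{d}x\to\int_\Omega u_*w_*\,\mathrm{d}x$, and $\|\nabla w(t_k)\|_2\to\|\nabla w_*\|_2$, $\|w(t_k)\|_2\to\|w_*\|_2$), which is \eqref{e3a}; and since $\|-D\Delta w(t_k)+\delta w(t_k)-v(t_k)\|_2\to 0$ while the corresponding term of $\mathcal{E}$ is nonnegative, $\mathcal{E}(u_*,v_*,w_*)=\mathcal{E}_0(u_*,w_*)=\lim_k\mathcal{E}_0(u(t_k),w(t_k))\le\liminf_k\mathcal{E}(u(t_k),v(t_k),w(t_k))$, which is \eqref{e3e}.
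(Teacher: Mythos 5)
Your argument is correct in substance and reaches all of \eqref{e3b}--\eqref{e3e}, but it takes a more self-contained route than the paper. The paper derives only the energy-based uniform bounds \eqref{e2}, \eqref{e4}, \eqref{e5} (an $L\ln L$ bound on $u$, $W_2^2$ and $\partial_t$ bounds on $w$, $L_\infty$ bounds on $v$, and $\int_0^\infty\mathcal{D}\,\mathrm{d}s<\infty$), then delegates the extraction of $(t_k)$ and the identification of the limit to the argument of \cite[Theorem~5.2]{GZ1998} and \cite[Lemma~1]{Ho2002}, adding only the weak lower semicontinuity of the $L_2$-norm to obtain \eqref{e3e}; note in particular that the paper does not need $\|{-D}\Delta w(t_k)+\delta w(t_k)-v(t_k)\|_2\to 0$ for \eqref{e3e}, whereas you use it both for \eqref{e3e} (with equality, via $\mathcal{E}\ge\mathcal{E}_0$) and to identify $-D\Delta w_*+\delta w_*=v_*$, which is a clean alternative. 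The genuinely new ingredient in your write-up is the explicit proof that $u_*=v_*$ through the decay $\|u(t)-v(t)\|_2\to 0$, obtained from the differential inequality driven by $\mathcal{D}_1^{1/2}\in L_2(0,\infty)$ and the vanishing at infinity of the convolution of an $L_2$ function with $e^{-2s/\nu\varepsilon}$; the paper leaves this identification implicit in its citation, so your treatment is a worthwhile addition. The price you pay is Step~1: your $u_*=v_*$ argument needs a time-uniform bound on $\|\nabla(u-v)(t)\|_2$, which does \emph{not} follow from the paper's estimates \eqref{e2}--\eqref{e5}, and a naive reuse of the Gronwall arguments of Section~\ref{s2.2} (e.g.\ Lemma~\ref{lemc6}) only yields constants growing with $T$. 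Your sketch correctly points to the remedy --- Duhamel formulas with the exponential decay of $e^{-t(-D\Delta+\delta)/\nu}$ and of the Neumann heat semigroup on mean-zero functions, using $\|u\nabla w\|_\infty\le\Lambda\|\nabla w\|_\infty$ --- but the uniform $W_{3,\mathcal{B}}^{2\theta}$ bound on $u$ requires a short iteration (first H\"older/$W_r^s$, $s<1$, bounds on $u$, then improved regularity of $u\nabla w$, then a second application of the smoothing estimates) that you should spell out; with that done, the compactness, the identification $\nabla u_*=u_*\nabla w_*$ hence $u_*=Me^{w_*}/\|e^{w_*}\|_1$ (on the connected domain $\Omega$), and the passage to the limit in the energies are all sound.
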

%%%%%%%%%%%%%%%%

\begin{proof}
A first consequence of \eqref{d3} and \eqref{e1} is that, for $t\ge 0$, 
\begin{equation*}
\int_\Omega u(t,x) w(t,x)\ \mathrm{d}x \le \|u(t)\|_\infty \|w(t)\|_1 \le b \Lambda\ ,
\end{equation*}
so that
\begin{equation*}
\mathcal{E}_0(u(t),w(t)) \ge \|L(u(t))\|_1 + \frac{D}{2} \|\nabla w(t)\|_2^2 + \frac{\delta}{2} \|w(t)\|_2^2 - b \Lambda
\end{equation*}
by \eqref{a3a}. Together with \eqref{a1c} and Lemma~\ref{lemd2}, the previous inequality implies that
\begin{equation}
\|L(u(t))\|_1 + \|w(t)\|_{W_2^1} + \|\partial_t w(t)\|_2^2 + \int_0^t \mathcal{D}(u(s),v(s),w(s))\ \mathrm{d}s \le b(1+\Lambda) \label{e2}
\end{equation}
for $t\ge 0$. It next follows from \eqref{a1b} and \eqref{e1} that
\begin{equation}
\|v(t)\|_\infty + \|\partial_t v(t)\|_\infty \le \left( 1 + \frac{1}{\nu\varepsilon} \right)\left( \|v^{in}\|_\infty + \Lambda \right)\ , \qquad t\ge 0\ . \label{e4}
\end{equation}
Also, \eqref{a1c}, \eqref{e2}, and \eqref{e4} entail that
\begin{equation*}
D \|\Delta w(t)\|_2 \le \nu \|\partial_t w(t)\|_2 + \delta \|w(t)\|_2 + \|v(t)\|_2 \le b(1+\Lambda)\ , \qquad t\ge 0\ ,
\end{equation*}
from which we deduce that
\begin{equation}
\|w(t)\|_{W_2^2} \le b(1 + \Lambda)\ , \qquad t\ge 0\ . \label{e5}
\end{equation}
The remainder of the proof is then the same as that of \cite[Theorem~5.2]{GZ1998}, see also \cite[Lemma~1]{Ho2002}. The only additional point is to derive \eqref{e3e}. To this end, we notice that \eqref{e3b} and \eqref{e5} entail that $(-D \Delta w(t_k) + \delta w(t_k) - v(t_k))_{k\ge 1}$ converges weakly towards $-D\Delta w_* + \delta w_* - v_*$ in $L_2(\Omega)$, which implies that
\begin{equation*}
\| -D\Delta w_* + \delta w_* - v_* \|_2 \le \liminf_{k\to \infty} \|-D \Delta w(t_k) + \delta w(t_k) - v(t_k)\|_2
\end{equation*}
by a weak lower semicontinuity argument. Combining this property with \eqref{e3a} gives \eqref{e3e} .
\end{proof}

%%%%%%%%%%%%%%%%
%%%%%%%%%%%%%%%%
\section{Unbounded solutions}\label{s4}
%%%%%%%%%%%%%%%%
%%%%%%%%%%%%%%%%

This section is devoted to the proof of Theorem~\ref{thma3} which is somewhat a direct consequence of the analysis performed in \cite{Ho2001, Ho2002, HW2001}. The main step is actually to recast the problem in a way suitable to apply results from \cite{HW2001}.

Given $M>0$, we define the set $\mathcal{S}_M$ as follows: $(u,v,w) \in \mathcal{S}_M$ if 
\begin{subequations}\label{f0}
\begin{align}
& (u,v,w)\in W_{2,\mathcal{B}}^2(\Omega;\mathbb{R}^3)\ , \label{f0a} \\
& u = v = M \frac{e^w}{\|e^w\|_1} \ge 0 \;\text{ in }\; \Omega\ , \qquad  w\ge 0 \;\text{ in }\; \Omega\ , \label{f0b} \\
& - D \Delta w + \delta w = M \frac{e^w}{\|e^w\|_1} \;\text{in }\; \Omega\ , \qquad \nabla w \cdot \mathbf{n} = 0 \;\text{ on }\; \partial\Omega\ . \label{f0c}
\end{align}
\end{subequations}
Equivalently, $\mathcal{S}_M$ is the set of non-negative stationary solutions $(u,v,w)$ to \eqref{a1} which belong to $W_{2,\mathcal{B}}^2(\Omega;\mathbb{R}^3)$ and for which $\|u\|_1 = M$. We begin with a lower bound of the Liapunov functional $\mathcal{E}$ (defined in \eqref{a3}) on $\mathcal{S}_M$ for suitable values of $M$. 

%%%%%%%%%%%%%%%%
\begin{proposition}\label{propf1}
\begin{itemize}
	\item[(a)] If $M\in (4\pi D,\infty)\setminus 4\pi D \mathbb{N}$, then
	\begin{equation*}
	\mu_M := \inf_{(u,v,w)\in \mathcal{S}_M} \mathcal{E}(u,v,w) > -\infty\ .
	\end{equation*}
	\item[(b)] If $\Omega=B_r(0)$ for some $r>0$ and $M\in (8\pi D,\infty)$, then
	\begin{equation*}
	\mu_M := \inf_{(u,v,w)\in \mathcal{S}_{M,\text{rad}}} \mathcal{E}(u,v,w) > -\infty\ ,
	\end{equation*}
	where $\mathcal{S}_{M,\text{rad}} := \{ (u,v,w)\in \mathcal{S}_M\ :\ u,v,w \;\text{ are radially symmetric} \}$.
\end{itemize}
\end{proposition}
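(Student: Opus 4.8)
The plan is to reduce the statement for $\mathcal{S}_M$ to a known coercivity estimate on the Keller--Segel energy $\mathcal{E}_0$ restricted to the set $\mathcal{A}_M$ of \eqref{a9}. The key observation is that for a stationary solution $(u,v,w)\in\mathcal{S}_M$ we have, by \eqref{f0b}--\eqref{f0c}, that $v=u$ and $-D\Delta w+\delta w-v=0$, so the extra term $\tfrac{\varepsilon}{2}\|-D\Delta w+\delta w-v\|_2^2$ in $\mathcal{E}$ (see \eqref{a3b}) vanishes and $\mathcal{E}(u,v,w)=\mathcal{E}_0(u,w)$. Moreover such a pair $(u,w)$ belongs to $\mathcal{A}_M$: indeed $u=M e^w/\|e^w\|_1\ge 0$ with $\|u\|_1=M$, $w\ge 0$, and $w\in W_{2,\mathcal{B}}^2(\Omega)\subset W_2^1(\Omega)$. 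Consequently
\begin{equation*}
\mu_M=\inf_{(u,v,w)\in\mathcal{S}_M}\mathcal{E}(u,v,w)\ge\inf_{(u,w)\in\mathcal{A}_M}\mathcal{E}_0(u,w),
\end{equation*}
and part (a) will follow once we know that the right-hand side is finite. But this finiteness is precisely the delicate part: by \eqref{a10} the infimum of $\mathcal{E}_0$ over all of $\mathcal{A}_M$ equals $-\infty$ as soon as $M>4\pi D$, so a crude argument is not enough. The point — and the reason for the hypothesis $M\notin 4\pi D\,\mathbb{N}$ — is that one restricts to the smaller set of $(u,w)$ coming from stationary solutions, and on that set the sharp Moser--Trudinger/Trudinger--Moser analysis carried out in \cite{Ho2001, Ho2002, HW2001} yields a uniform lower bound. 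Concretely, a stationary solution satisfies $u=v=Me^w/\|e^w\|_1$, so $\mathcal{E}_0(u,w)$ can be written purely as a functional of $w$ solving the nonlocal elliptic problem \eqref{e3d}; this is the Liouville-type/mean-field functional whose critical levels are quantized, and \cite[Lemma~3.5]{HW2001} provides exactly the lower bound $\mathcal{E}(u_*,v_*,w_*)\ge-\mu_M$ with $\mu_M<\infty$ for $M\notin 4\pi D\,\mathbb{N}$ (the excluded values being where mass concentration at a boundary or interior point becomes admissible and the energy can run off to $-\infty$).

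For part (b), when $\Omega=B_r(0)$ and one restricts to radially symmetric stationary solutions, the relevant concentration phenomenon is governed by the improved Moser--Trudinger inequality for radial functions \cite[Theorem~2.1]{NSY1997} (already invoked in the proof of Theorem~\ref{thma2}(b)), in which the critical constant doubles from $\tfrac{1}{8\pi}$ to $\tfrac{1}{16\pi}$. The threshold for the radial mean-field functional to stay bounded below is therefore $8\pi D$ rather than $4\pi D$, and no further points need be excluded: for any $M\in(8\pi D,\infty)$ the radial analysis in \cite{Ho2001, Ho2002} gives a finite lower bound. So the plan is simply to invoke the radial analogue of \cite[Lemma~3.5]{HW2001} (or reproduce its short argument using \cite[Theorem~2.1]{NSY1997}) on $\mathcal{S}_{M,\mathrm{rad}}$.

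The main obstacle is thus not the reduction $\mathcal{E}=\mathcal{E}_0$ on $\mathcal{S}_M$, which is immediate, but rather quoting — and correctly matching up the normalizations in — the sharp bounds from \cite{HW2001}. One has to be careful that the nonlocal elliptic problem \eqref{f0c} is written with the coefficient $D$ on the Laplacian and $\delta$ on the zeroth-order term, whereas the cited references may use a normalized version; an elementary rescaling $w\mapsto w$, $x\mapsto x$ reconciles the two and transports the threshold $4\pi$ (resp.\ $8\pi$) of the standard mean-field functional to $4\pi D$ (resp.\ $8\pi D$) here. Once the bookkeeping is done, the finiteness of $\mu_M$ in both cases is a direct consequence of \cite[Lemma~3.5]{HW2001} and its radial counterpart, and the proof is complete.
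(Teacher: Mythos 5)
Your argument is correct and is essentially the paper's proof: on $\mathcal{S}_M$ the $\varepsilon$-term drops, $\mathcal{E}_0$ evaluated at $u=v=Me^w/\|e^w\|_1$ is rewritten (after subtracting the mean, using $\|w\|_1=M/\delta$) as the mean-field functional $\mathcal{F}(W)$, $W=w-\langle w\rangle$, up to explicit constants, and the uniform lower bound along solutions of the nonlocal elliptic equation is exactly \cite[Lemma~3.5]{HW2001} in case~(a) and its radial counterpart \cite[Corollary~3.7 \& Remark~3.8]{HW2001} in case~(b). One small caution: your parenthetical suggestion to reprove case~(b) directly from the radial Moser--Trudinger inequality of \cite[Theorem~2.1]{NSY1997} would only give coercivity of the functional for $M<8\pi D$, not a bound along solutions for $M>8\pi D$, so the citation route is the one to keep.
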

%%%%%%%%%%%%%%%%

\begin{proof}  -- Case~(a). Consider $(u,v,w)\in \mathcal{S}_M$. Introducing 
\begin{equation*}
E(w) := \frac{D}{2} \|\nabla w\|_2^2 + \frac{\delta}{2} \|w\|_2^2 - M \ln{\|e^w\|_1} + M\ln{M} - M + |\Omega|\ ,
\end{equation*}
it follows from \eqref{d7a} that
\begin{equation}
\mathcal{E}(u,v,w) \ge \mathcal{E}_0(u,w) \ge E(w)\ , \label{f1}
\end{equation}
while the fact that $(u,v,w)\in\mathcal{S}_M$ implies that
\begin{equation}
w \ge 0 \;\text{ in }\; \Omega \;\;\text{ and }\;\; \|w\|_1 = \frac{M}{\delta}\ . \label{f2}
\end{equation}
Setting 
\begin{equation*}
W := w - \frac{M}{\delta |\Omega|} = w - \frac{1}{|\Omega|} \int_\Omega w(x)\ \mathrm{d}x\ ,
\end{equation*}
an alternative formula for $E(w)$ in terms of $W$ reads
\begin{align}
E(w) & = \frac{D}{2} \|\nabla W\|_2^2 + \frac{\delta}{2} \|W\|_2^2 - M \ln{\|e^W\|_1} - \frac{M^2}{2\delta |\Omega|} + M\ln{M} - M + |\Omega| \nonumber\\
& = \frac{M}{|\Omega|} \mathcal{F}(W) - \frac{M^2}{2\delta |\Omega|} + M\ln{M} - M + |\Omega| - M \ln{|\Omega|}\ , \label{f3}
\end{align}
with
\begin{equation}
\mathcal{F}(W) := \frac{|\Omega|}{2M} \left( D \|\nabla W\|_2^2 + \delta \|W\|_2^2 \right) - |\Omega| \ln{\left( \frac{\|e^W \|_1}{|\Omega|} \right)}\ . \label{f10}
\end{equation}
Furthermore, since $w$ solves \eqref{f0c}, $W$ is a solution to
\begin{equation*}
- D \Delta W + \delta W = \frac{M}{|\Omega|} \left( \frac{|\Omega| e^W}{\|e^W\|_1} - 1 \right) \;\text{in }\; \Omega\ , \qquad \nabla W \cdot \mathbf{n} = 0 \;\text{ on }\; \partial\Omega\ .
\end{equation*}
Since $M\in (4\pi D,\infty)\setminus 4\pi D \mathbb{N}$, we infer from \cite[Lemma~3.5]{HW2001} that there is $\mu\ge 0$ which does not depend on $W$ such that
\begin{equation}
\mathcal{F}(W) \ge - \mu\ . \label{f4}
\end{equation}
Combining \eqref{f1}, \eqref{f3}, and \eqref{f4} completes the proof. 

\medskip

\noindent -- Case~(b). The proof is the same as that for case~(a), except that we use \cite[Corollary~3.7 \& Remark~3.8]{HW2001} instead of \cite[Lemma~3.5]{HW2001} to obtain the lower bound \eqref{f4} for all $M>8\pi D$.
\end{proof}

The second step is to show that $\mathcal{E}$ is not bounded from below on the set $\mathcal{I}_{M,\theta}$ of initial data to which Theorem~\ref{thma1} applies. 

%%%%%%%%%%%%%%%%
\begin{proposition}\label{propf2}
	Let $\theta\in (5/6,1)$. 
\begin{itemize}
	\item[(a)] If $M>4\pi D$, then 
	\begin{equation*}
	\inf_{(u,v,w)\in \mathcal{I}_{M,\theta}} \mathcal{E}(u,v,w) = - \infty\ .
	\end{equation*}
	\item[(b)] If $\Omega=B_r(0)$ for some $r>0$ and $M>8\pi D$, then 
	\begin{equation*}
	\inf_{(u,v,w)\in \mathcal{I}_{M,\theta}^{\text{rad}}} \mathcal{E}(u,v,w) = - \infty\ ,
	\end{equation*}
	where $\mathcal{I}_{M,\theta}^{\text{rad}} := \{ (u,v,w)\in \mathcal{I}_{M,\theta}\ :\ u,v,w \;\text{ are radially symmetric} \}$.
\end{itemize}
\end{proposition}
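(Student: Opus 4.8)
The plan is to exhibit, for each admissible $M$, an explicit sequence $(u_n,v_n,w_n)\in\mathcal{I}_{M,\theta}$ (radially symmetric in case~(b)) along which $\mathcal{E}\to-\infty$; the heart of the matter will be the classical super-critical Moser--Trudinger asymptotics, imported from \cite{Ho2001, Ho2002, HW2001}. First I would reduce the problem to a statement about $w$ alone. For any non-negative $w\in W_{3,\mathcal{B}}^{2}(\Omega)$, set $u := M e^{w}/\|e^{w}\|_1$; then $u\ge 0$, $\|u\|_1=M$, and $\ln u - w \equiv \ln(M/\|e^w\|_1)$, so a direct computation from \eqref{a4} and \eqref{a3a} gives $\mathcal{E}_0(u,w)=E(w)$ (the equality case of \eqref{d7a}), where $E$ is the functional used in the proof of Proposition~\ref{propf1}. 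If moreover $-D\Delta w + \delta w\ge 0$ in $\Omega$, then $v := -D\Delta w + \delta w$ is admissible and annihilates the last term of \eqref{a3b}, so that $\mathcal{E}(u,v,w)=E(w)$. Hence it suffices to produce non-negative $w_n\in W_{3,\mathcal{B}}^{2}(\Omega)$ (radial in case~(b)) with $-D\Delta w_n+\delta w_n\ge 0$ and $E(w_n)\to-\infty$.

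Such a sequence is furnished by elliptic regularisation of a concentrating source. I would fix $x_0\in\partial\Omega$ in case~(a) and $x_0=0$ in case~(b), choose non-negative $\varrho_n\in C^\infty(\overline\Omega)$, radially symmetric about $x_0$ in case~(b), with $\int_\Omega\varrho_n\,\mathrm{d}x=M$ and concentrating at $x_0$ at scale $\lambda_n\downarrow 0$ (e.g.\ suitably normalised rescaled mollifiers), and let $w_n$ solve $-D\Delta w_n+\delta w_n=\varrho_n$ in $\Omega$ with $\nabla w_n\cdot\mathbf{n}=0$ on $\partial\Omega$. Elliptic regularity gives $w_n\in W_p^2(\Omega)$ for every $p<\infty$, hence $w_n\in W_{3,\mathcal{B}}^2(\Omega)\cap W_\infty^1(\Omega)$ (and radial in case~(b)), while the maximum principle together with $\delta>0$ gives $w_n\ge 0$, so $w_n\in W_{3,\mathcal{B},+}^2(\Omega)$. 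Then $v_n:=\varrho_n\in W_{3,\mathcal{B},+}^{1}(\Omega)$, and $u_n := M e^{w_n}/\|e^{w_n}\|_1$ belongs to $W_{3,\mathcal{B},+}^{2\theta}(\Omega)$ (since $e^{w_n}\in W_3^2(\Omega)$, $\nabla u_n\cdot\mathbf{n}=u_n\,\nabla w_n\cdot\mathbf{n}=0$, $u_n\ge 0$, $\|u_n\|_1=M$). Thus $(u_n,v_n,w_n)\in\mathcal{I}_{M,\theta}$ (resp.\ $\mathcal{I}_{M,\theta}^{\text{rad}}$), and by the first step $\mathcal{E}(u_n,v_n,w_n)=E(w_n)$.

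There remains to show $E(w_n)\to-\infty$. Integrating the equation for $w_n$ over $\Omega$ and using $w_n\ge 0$ gives $\|w_n\|_1=M/\delta$, so $w_n$ satisfies \eqref{f2} and the computation \eqref{f3} applies: $E(w_n)=\tfrac{M}{|\Omega|}\,\mathcal{F}(W_n)+\kappa_M$, where $W_n := w_n-M/(\delta|\Omega|)$, $\mathcal{F}$ is as in \eqref{f10}, and $\kappa_M$ depends only on $M$, $\delta$, $|\Omega|$. The claim therefore reduces to $\mathcal{F}(W_n)\to-\infty$, i.e.\ to the unboundedness from below of the Moser--Trudinger functional $\mathcal{F}$ at supercritical mass --- the phenomenon behind the dichotomy \eqref{a10}. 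Keeping only the leading logarithmic terms, concentration at the boundary point $x_0$ produces $\|\nabla W_n\|_2^2\sim \tfrac{M^2}{\pi D^2}\ln(1/\lambda_n)$ and $\ln\|e^{W_n}\|_1\ge (\tfrac{M}{\pi D}-2)\ln(1/\lambda_n)+O(1)$ --- the factor $2$ coming from reflection across $\partial\Omega$ --- whence $E(w_n)\le \tfrac{M(4\pi D-M)}{2\pi D}\ln(1/\lambda_n)+O(1)\to-\infty$ when $M>4\pi D$; in case~(b), centred radial concentration yields the analogous estimates without the factor $2$ and $E(w_n)\le\tfrac{M(8\pi D-M)}{4\pi D}\ln(1/\lambda_n)+O(1)\to-\infty$ when $M>8\pi D$. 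Alternatively, one may simply quote from \cite{Ho2001, Ho2002, HW2001} that $\inf\mathcal{F}=-\infty$ on the relevant classes of mean-zero functions in these ranges of $M$. I expect this asymptotic computation --- and in particular the boundary gain of a factor $2$, which is exactly what lowers the threshold from $8\pi D$ to $4\pi D$ in case~(a) --- to be the only genuinely delicate point, and it is entirely classical.
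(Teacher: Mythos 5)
Your proposal is correct in substance but follows a genuinely different route from the paper's proof. The paper starts from the explicit Moser--Trudinger bubbles $\Theta_\eta$ of \cite[Section~3]{HW2001} (resp.\ \cite[Lemma~2]{Ho2002} in the radial case), for which $\mathcal{F}(\Theta_\eta)\to-\infty$ is already known, and the bulk of the argument is an approximation step: $\Theta_\eta$ is approximated in $W_2^1$ by mean-zero functions in $W_{3,\mathcal{B}}^2(\Omega)$, the convergence of $\mathcal{F}$ along the approximation being obtained via Proposition~\ref{propd2}, Dunford--Pettis and Vitali, and $v_\eta$ is only an $L_3$-approximation of $-D\Delta w_\eta+\delta w_\eta$, so the $\varepsilon$-term in $\mathcal{E}$ is made small rather than zero. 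You instead regularize at the level of the source: solving $-D\Delta w_n+\delta w_n=\varrho_n$ with a smooth, non-negative, concentrating $\varrho_n$ yields in one stroke the $W_{3,\mathcal{B}}^2$-regularity and Neumann condition, the non-negativity of $w_n$ (maximum principle) and of $v_n=\varrho_n$ --- a membership requirement of $\mathcal{I}_{M,\theta}$ that the paper's construction handles less explicitly --- and the exact vanishing of the $\varepsilon$-term, so that $\mathcal{E}(u_n,v_n,w_n)=E(w_n)$; your identity $\mathcal{E}_0(u,w)=E(w)$ for $u=Me^w/\|e^w\|_1$ is indeed the equality case of \eqref{d7a}, and \eqref{f3} applies since $\|w_n\|_1=M/\delta$. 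What your approach buys is a cleaner, self-contained admissible sequence; what it costs is that the divergence $E(w_n)\to-\infty$ must be proved by a direct concentration computation for \emph{your} sequence. The leading constants you state are correct (Neumann Green's function $\sim(\pi D)^{-1}\ln(1/|x-x_0|)$ at a smooth boundary point, $(2\pi D)^{-1}$ at an interior point, whence the thresholds $4\pi D$ and $8\pi D$), but the $O(1)$ error bounds require the local expansion of the Neumann Green's function near $x_0$, i.e.\ essentially the computation done in \cite[Section~3]{HW2001} for $\Theta_\eta$, so this step must be written out rather than asserted. Finally, be aware that your proposed shortcut of simply quoting $\inf\mathcal{F}=-\infty$ from \cite{Ho2001,Ho2002,HW2001} does not close the argument: that statement concerns the infimum over all mean-zero $W_2^1$ functions and says nothing about $\mathcal{F}(W_n)$ along your particular regularized sequence; to exploit it one must approximate near-minimizers by admissible triples, which is precisely the paper's route.
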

%%%%%%%%%%%%%%%%

\begin{proof} -- Case~(a). Fix $x_0\in\partial\Omega$. For $\eta\in (0,\infty)$ and $x\in\Omega$, define
\begin{equation*}
\Theta_\eta(x) := 2 \ln{\left( \frac{\eta}{(\eta^2+\pi |x-x_0|^2)} \right)} - \frac{2}{|\Omega|} \int_\Omega \ln{\left( \frac{\eta}{(\eta^2+\pi |y-x_0|^2)} \right)}\ \mathrm{d}y\ .
\end{equation*}
Then $\Theta_\eta\in W_2^1(\Omega)$ with $\langle \Theta_\eta \rangle=0$ and satisfies
\begin{equation}
\lim_{\eta\to 0} \mathcal{F}(\Theta_\eta) = - \infty \;\text{ and } \lim_{\eta\to 0} \|\nabla \Theta_\eta\|_2 = \infty \label{f5}
\end{equation}
by \cite[Section~3]{HW2001}, since $M>4\pi D$ and the boundary of $\Omega$ is smooth. Let $\eta\in (0,\infty)$. Since $W_3^1(\Omega) \stackrel{\cdot}{=} [L_3(\Omega),W_{3,\mathcal{B}}^2(\Omega)]_{1/2}$ by \cite[Theorem~5.2]{Am1993} and $W_3^1(\Omega)$ is densely embedded in $W_2^1(\Omega)$, there is a sequence $(W_{j,\eta})_{j\ge 1}$ in $W_{3,\mathcal{B}}^2(\Omega)$ such that
\begin{equation}
\|W_{j,\eta}\|_{W_2^1}\le 2 \|\Theta_\eta\|_{W_2^1}\ , \quad \langle W_{j,\eta} \rangle = 0\ , \quad j\ge 1 \;\text{ and }\; \lim_{j\to\infty} \|W_{j,\eta} - \Theta_\eta\|_{W_2^1} = 0\ . \label{f6}
\end{equation}
In addition, possibly after extracting a subsequence, we may assume that
\begin{equation}
\lim_{j\to\infty} W_{j,\eta}(x) = \Theta_\eta(x) \;\text{ for a.e. }\; x\in\Omega\ . \label{f8}
\end{equation}
In addition, it follows from H\"older's inequality and Proposition~\ref{propd2} that, for any measurable subset $\omega$ of $\Omega$,
\begin{align*}
\int_\omega e^{W_{j,\eta}}\ \mathrm{d}x & \le |\omega|^{1/2} \left( \int_\omega e^{2W_{j,\eta}}\ \mathrm{d}x\right)^{1/2} \\
& \le \sqrt{\kappa_0} |\omega|^{1/2} \exp{\left( \frac{\|\nabla W_{j,\eta}\|_2^2}{4\pi} + \frac{\|W_{j,\eta}\|_1}{|\Omega|} \right)} \\
& \le  \sqrt{\kappa_0} |\omega|^{1/2} \exp{\left( \frac{\|\Theta_{\eta}\|_{W_2^1}^2}{\pi} + \frac{\sqrt{2} \|\Theta_{\eta}\|_{W_2^1}}{\sqrt{|\Omega|}} \right)}\ ,
\end{align*}
which allows us to deduce from Dunford-Pettis' theorem that
\begin{equation*}
\left( e^{W_{j,\eta}} \right)_{j\ge 1} \;\text{ is relatively sequentially weakly compact in }\; L_1(\Omega)\ . 
\end{equation*}
Since $\left( e^{W_{j,\eta}} \right)_{j\ge 1}$ converges to $e^{\Theta_\eta}$ a.e. in $\Omega$ by \eqref{f8}, we deduce from these two properties and Vitali's theorem that
\begin{equation}
\lim_{j\to\infty} \left\| e^{W_{j,\eta}} - e^{\Theta_\eta} \right\|_1 = 0\ . \label{f9}
\end{equation}
Recalling the definition \eqref{f10} of $\mathcal{F}$, we infer from \eqref{f6} and \eqref{f9} that
\begin{equation}
\lim_{j\to\infty} \mathcal{F}(W_{\eta,j}) = \mathcal{F}(\Theta_\eta)\ . \label{f11}
\end{equation}
Owing to \eqref{f6} and \eqref{f11}, there is $j_\eta\ge 1$ such that the function $W_\eta := W_{j_\eta,\eta}$ satisfies
\begin{equation}
W_\eta\in W_{3,\mathcal{B}}^2(\Omega) \;\text{ and }\; \left\|W_{\eta} - \Theta_\eta \right\|_{W_2^1} + \left| \mathcal{F}(W_{\eta}) - \mathcal{F}(\Theta_\eta) \right| \le \eta\ . \label{f12}
\end{equation}

After this preparation, for $\eta>0$, we set
\begin{equation}
w_\eta := W_{\eta} + \frac{M}{\delta |\Omega|}\ , \quad u_\eta := M \frac{e^{w_\eta}}{\|e^{w_\eta}\|_1}\ , \label{f13}
\end{equation}
and, since $-D\Delta w_\eta + \delta w_\eta\in L_3(\Omega)$, we fix 
\begin{equation}
v_\eta\in W_3^1(\Omega) \;\;\text{ such that }\;\; \| v_\eta + D \Delta w_\eta - \delta w_\eta \|_3 \le \eta\ .  \label{f14}
\end{equation}
Now, owing to \eqref{a3a}, \eqref{f3}, and \eqref{f13},
\begin{align*}
\mathcal{E}_0(u_\eta,w_\eta) & = M\ln{M} - M \ln{\|e^{w_\eta}\|_1} - M + |\Omega| + \frac{D}{2} \|\nabla w_\eta\|_2^2 + \frac{\delta}{2} \|w_\eta\|_2^2 \\
& = E(W_\eta) =  \frac{M}{|\Omega|} \mathcal{F}(W_\eta) + \chi\ ,
\end{align*}
with
\begin{equation*}
\chi := - \frac{M^2}{2\delta |\Omega|} + M\ln{M} - M + |\Omega| - M \ln{|\Omega|}\ , 
\end{equation*}
so that
\begin{equation}
\mathcal{E}(u_\eta,v_\eta,w_\eta) = \frac{M}{|\Omega|} \mathcal{F}(W_\eta) + \frac{\varepsilon}{2} \|-D\Delta w_\eta + \delta w_\eta - v_\eta \|_2^2 + \chi\ . \label{f15}
\end{equation}
We then infer from \eqref{f12}, \eqref{f14}, \eqref{f15}, and H\"older's inequality that
\begin{align*}
\mathcal{E}(u_\eta,v_\eta,w_\eta) & \le \frac{M}{|\Omega|} \mathcal{F}(\Theta_\eta) + \frac{\eta M}{|\Omega|} + \frac{\varepsilon |\Omega|^{1/3}}{2} \|-D \Delta w_\eta + \delta w_\eta - v_\eta\|_3^2 + \chi\\
& \le \frac{M}{|\Omega|} \mathcal{F}(\Theta_\eta) + \frac{\eta M}{|\Omega|} + \frac{\varepsilon \eta^2 |\Omega|^{1/3}}{2} + \chi \ .
\end{align*}
Taking the limit $\eta\to 0$ in the previous inequality and using \eqref{f5} lead us to
\begin{equation*}
\lim_{\eta\to 0} \mathcal{E}(u_\eta,v_\eta,w_\eta) = - \infty\ , 
\end{equation*}
and completes the proof. 

\medskip

\noindent -- Case~(b). The proof is the same as the previous one except that the starting point is \cite[Lemma~2]{Ho2002}, the function $\Theta_\eta$ being defined as in Case~(a) but with $x_0=0$.
\end{proof}
	
\begin{proof}[Proof of Theorem~\ref{thma3}] -- Case~(a). Consider $M\in (4\pi D,\infty)\setminus 4\pi D \mathbb{N}$ and $\theta\in (5/6,1)$. According to Proposition~\ref{propf2}, there is $(u^{in},v^{in},w^{in})\in \mathcal{I}_{M,\theta}$ such that  
\begin{equation}
\mathcal{E}(u^{in},v^{in},w^{in}) < \mu_M\ , \label{f16}
\end{equation}
the parameter $\mu_M$ being defined in Proposition~\ref{propf1}. Assume for contradiction that there is $\Lambda>0$ such that $\|u(t)\|_\infty\le \Lambda$ for all $t\ge 0$. We are then in a position to apply Proposition~\ref{propd2}~(a) and deduce that there are a sequence $(t_k)_{k\ge 1}$, $t_k\to\infty$, of positive real numbers and $(u_*,v_*,w_*)\in \mathcal{S}_M$ such that
\begin{equation*}
\mathcal{E}(u_*,v_*,w_*) \le \liminf_{k\to\infty} \mathcal{E}(u(t_k),v(t_k),w(t_k)) \ .
\end{equation*} 
Since $\mathcal{E}$ is a Liapunov function for \eqref{a1} by Lemma~\ref{lemd3} and $(u_*,v_*,w_*)\in \mathcal{S}_M$, we infer from Proposition~\ref{propf1}~(a) and the previous identity that
\begin{equation*}
\mu_M \le \mathcal{E}(u_*,v_*,w_*) \le \liminf_{k\to\infty} \mathcal{E}(u(t_k),v(t_k),w(t_k)) \le \mathcal{E}(u^{in},v^{in},w^{in})\ ,
\end{equation*} 
which contradicts \eqref{f16} and completes the proof.

\medskip

\noindent -- Case~(b). The proof is the same as the previous one and relies on Proposition~\ref{propf1}~(b) and Proposition~\ref{propf2}~(b). 
\end{proof}

%%%%%%%%%%%%%%%%
%%%%%%%%%%%%%%%%
\section{Related models}\label{s5}
%%%%%%%%%%%%%%%%
%%%%%%%%%%%%%%%%

To finish with, we just point out in an informal way that the related models \eqref{tw} and \eqref{twd} also have a Liapunov functional which is closely related to that of \eqref{a1}. Specifically, the functional $\mathcal{E}_0$ defined by \eqref{a3a} is a Liapunov functional for \eqref{tw} and, if $(u,v,w)$ is a solution to \eqref{tw}, then
\begin{equation*}
\frac{\mathrm{d}}{\mathrm{d}t} \mathcal{E}_0(u,w) = - \int_\Omega u |\nabla (\ln{u} - w)|^2\ \mathrm{d}x - D \|\nabla\partial_t w\|_2^2\ .
\end{equation*}
In the same vein, $\mathcal{E}_0$ is also a Liapunov functional for \eqref{twd} and, if $(u,v,w)$ is a solution to \eqref{twd}, then
\begin{equation*}
\frac{\mathrm{d}}{\mathrm{d}t} \mathcal{E}_0(u,w) = - \int_\Omega u |\nabla (\ln{u} - w)|^2\ \mathrm{d}x - \nu\varepsilon \left( D \|\nabla\partial_t w\|_2^2 + \delta \|\partial_t w\|_2^2 \right)\ .
\end{equation*}
It is then likely that the analysis performed for \eqref{a1} adapts to \eqref{tw} and \eqref{twd}, possibly with slightly different statements for \eqref{tw}. Indeed, Proposition~\ref{propf1}~(b) is only valid for $M\in (8\pi D,\infty)\setminus 8\pi D \mathbb{N}$ in that case, see \cite[Lemma~3]{Ho2002}.

%%%%%%%%%%%%%%%%
%%%%%%%%%%%%%%%%
\section*{Acknowledgments}
%%%%%%%%%%%%%%%%
%%%%%%%%%%%%%%%%

I gratefully thank Christoph Walker for helpful discussions during the preparation of this paper. Part of this work was done while enjoying the hospitality and support of the Institute of Mathematics of the Polish Academy of Sciences in Warsaw. 

%%%%%%%%%%%%%%%%
%%%%%%%%%%%%%%%%
\bibliographystyle{siam}
\bibliography{gbuscsisp}
%%%%%%%%%%%%%%%%
%%%%%%%%%%%%%%%%

\end{document}